\newtheorem{theorem}{Theorem}[section]
\newtheorem{proposition}[theorem]{Proposition}
\newtheorem{lemma}[theorem]{Lemma}
\newtheorem{hypothesis}[theorem]{Hypothesis}
\theoremstyle{definition}
\newtheorem{definition}[theorem]{Definition}
\newtheorem{remark}[theorem]{Remark}
\renewcommand{\le}{\leqslant}
\renewcommand{\ge}{\geqslant}
\renewcommand{\leq}{\leqslant}
\renewcommand{\geq}{\geqslant}
\newcommand{\lhdeq}{\trianglelefteqslant}    
\newcommand{\coloneq}{\vcentcolon=}      
\newcommand{\Aut}{\textrm{Aut}}
\definecolor{auburn}{rgb}{0.43, 0.21, 0.1}
\newcommand{\cC}{\mathscr{C}}
\newcommand{\cS}{\mathscr{S}}
\newcommand{\F}{{\mathbb F}}
\newcommand{\Z}{\mathrm{Z}}
\newcommand{\GL}{{\mathrm {GL}}}
\newcommand{\SL}{{\mathrm {SL}}}
\newcommand{\PSL}{{\mathrm {PSL}}}
\newcommand{\POm}{{\mathrm {P}\Omega}}
\newcommand{\PGL}{{\mathrm {PGL}}}
\newcommand{\PSU}{{\mathrm {PSU}}}
\newcommand{\PSp}{{\mathrm {PSp}}}
\newcommand{\Sp}{{\mathrm {Sp}}}
\newcommand{\SU}{{\mathrm {SU}}}
\newcommand{\SO}{{\mathrm {SO}}}
\newcommand{\Sz}{{\mathrm {Sz}}}
\newcommand{\Co}{{\mathrm {Co}}}
\newcommand{\HS}{{\mathrm {HS}}}
\newcommand{\Janko}{{\mathrm {J}}}
\newcommand{\Mathieu}{{\mathrm {M}}}
\newcommand{\Ru}{{\mathrm {Ru}}}
\newcommand{\Spin}{{\mathrm {Spin}}}
\newcommand{\GammaL}{\Gamma{\mathrm {L}}}
\newcommand{\nonsplit}[2]{#1\raisebox{0.6ex}{$\cdot$} #2}
\newcommand{\Out}{{\mathrm {Out}}}
\newcommand{\diag}{{\mathrm {diag}}}
\newcommand{\Lie}{{\mathrm {Lie}}}
\newcommand{\om}{\omega}
\author[1]{S. P. Glasby}
\affil[1]{\small Center for the Mathematics of Symmetry and Computation,
  \newline University of Western Australia, Perth 6009, Australia\newline
  \href{mailto:Stephen.Glasby@uwa.edu.au}{Stephen.Glasby@uwa.edu.au},
  \href{mailto:Cheryl.Praeger@uwa.edu.au}{Cheryl.Praeger@uwa.edu.au}}
\author[2]{Alice C. Niemeyer}
\affil[2]{\small Algebra and Representation Theory, RWTH Aachen University,
Pontdriesch 10-16, 52062 Aachen, Germany
  \href{mailto:alice.niemeyer@art.rwth-aachen.de}{alice.niemeyer@art.rwth-aachen.de}}
\author[1]{Cheryl E. Praeger}
\author[3]{A. E. Zalesski}
\affil[3]{\small Department of Mathematics, University of Brasilia, Brasilia - Federal District 70910-900, Brazil
  \href{mailto:alexandre.zalesski@gmail.com}{alexandre.zalesski@gmail.com}}
\title{Absolutely irreducible quasisimple linear groups containing elements of order a specified Zsigmondy~prime} 
\date{\today}
\begin{document}
\maketitle

\begin{abstract}
This paper is concerned with absolutely irreducible quasisimple subgroups $G$ of a finite general linear group $\GL_d(\F_q)$ for which some element $g\in G$ of prime order~$r$, in its action  on the natural module $V=(\F_q)^d$, is irreducible on a subspace of the form $V(1-g)$ of dimension $d/2$. 
We classify $G,d,r$, the characteristic $p$ of the field $\F_q$, and we identify those examples where the element $g$ has a fixed point subspace of dimension $d/2$. Our proof relies on representation theory, in particular, the multiplicities of eigenvalues of $g$, and builds on earlier results of DiMuro.
  \vskip2mm\noindent 
  {\bf Dedication: } To the memory of Richard A. Parker with gratitude for his friendship, and in recognition of his contribution to the modular character theory of quasisimple groups. 
  \vskip2mm\noindent 
  {\bf Keywords:}  
    linear groups, Aschbacher class $\cC_9$, almost simple, stingray element
  \vskip2mm\noindent
      {\bf 2020 Mathematics Subject Classification:} 20C20, 20G05, 20H20
\end{abstract}


\section{Introduction}

Absolutely irreducible quasisimple linear groups  arise naturally in Aschbacher's analysis~\cite{Asch} of subgroups of finite general linear groups, and finite  classical groups, as subgroups which preserve no natural geometrical structure on the natural module. Dealing with these groups is important both theoretically and computationally. State-of-the-art recognition algorithms for finite classical groups~\cite{DLLOB, GNP2023} seek a pair of elements of prime order which generate a smaller dimensional classical subgroup with high probability. Viewing such an element-pair as belonging to a smaller classical subgroup $X$ acting on the natural $d$-dimensional module for $X$, at least one element of the pair 
acts irreducibly on a subspace of dimension at least $d/2$.
This property is studied in \cite{GPPS} and later in \cite{DiMThesis}; and proving a probabilistic generation result for $X$ requires information about the absolutely irreducible quasisimple subgroups of $X$ that contain such an element. These elements are stingray elements (or ppd stingray elements) defined as follows.

\begin{definition}\label{d:ppd}
  Let $d, e,a $ be integers such that $1<e<d$ and $q=p^a$ where $p$
  is prime.  
  \begin{enumerate}
  \item[(a)] A  \emph{primitive prime divisor} of $q^e-1$ is a prime $r$
    dividing $q^e-1$ such that $r$ does not divide $q^i-1$ for any $i<e$;
    we sometimes call such a prime an \emph{$e$-ppd prime}.
  \item[(b)] For an $e$-ppd prime $r$, an element $g\in\GL_d(q)$ of order
    a multiple of $r$ is called an \emph{$e$-ppd element}. If such an
    element $g$ has order $r$, then $g$ is semisimple and preserves a
    decomposition of the natural module $V=\F_q^d$ of the form
    $V=\left(\oplus_{i=1}^t U_i\right)\oplus F$, where $t\geq 1$, $F$ is  the
    fixed point subspace of $g$, and $g$ leaves invariant and acts
    irreducibly on each $U_i$ with $\dim(U_i)=e$.
  \item[(c)] An element $g\in\GL_d(q)$ is called an \emph{$e$-stingray element} if $g$ preserves a
    decomposition $V=U\oplus F$, where $F$ is  the
    fixed point subspace of $g$ and $g$ acts
    irreducibly on $U$ with $\dim(U)=e$.  In particular $|g|\mid q^e-1$.
    An $e$-stingray element $g$ of order a multiple of an $e$-ppd prime $r$ is called
    an \emph{$e$-ppd stingray element} (and in most of this paper $|g|$ is equal to the $e$-ppd prime $r$). 
 \end{enumerate}   
\end{definition}

Each of the pair of elements referred to in the first paragraph above is an $e$-stingray element of $\GL_d(q)$, for some $e$, and for at least one element in the pair the parameter $e$ satisfies $d/2\leq e\leq d$. 
If $e>d/2$, then each $e$-ppd element of prime order $r$ is an $e$-stingray element, and the absolutely irreducible quasisimple subgroups $G$ of $\GL_d(q)$ containing an $e$-ppd element were classified in \cite{GPPS} (see \cite[Examples 2.6--2.9, especially Tables 2--8]{GPPS}); these results specify the parameter $e$ and the prime $r$ in terms of $e$.
\emph{Our focus in this paper is on $e$-ppd elements and $e$-ppd stingray elements with $e=d/2$.} 

The  absolutely irreducible quasisimple subgroups of $\GL_d(q)$ containing an $e$-ppd element with $e=d/2$ are classified  in DiMuro's thesis \cite{DiMThesis}. In fact, 
DiMuro's classification is given for all $e$ in the range $d/3<e\leq d/2$, and for  elements of prime power order~$r$, where $r$ satisfies the condition in Definition~\ref{d:ppd}(a). Information is given about the prime (power) $r$ in DiMuro's tables in \cite[Section 1.2]{DiMThesis} and the published paper \cite{DiM}, but the parameter $e$ is not identified explicitly. We note that, for $e=d/2$, either an $e$-ppd element is an $e$-stingray element, or it leaves invariant a decomposition of the natural module as a direct sum of two $e$-dimensional irreducible submodules, see Remark~\ref{r:e-ppd}.  \emph{Disentangling the relevant examples of groups containing an $e$-ppd element of prime order $r$ with  $e=d/2$, and deciding which of the groups contains an $e$-stingray element (not just an $e$-ppd element)  is not straightforward, and is the aim of this paper.} 

For these reasons we first summarise DiMuro's results for absolutely irreducible quasi\-simple subgroups $G$ of $\GL_d(q)$ containing an $e$-ppd element with $e=d/2$.  We focus on elements of order a ppd-prime because these are the elements required algorithmically. Since DiMuro's journal article~\cite{DiM} does not cover the case where $G$ is of Lie type in the same characteristic $p$, we have given an independent analysis of this case in Section~\ref{sec:samechar}, and in so doing we uncovered four groups missing from the tables in \cite{DiMThesis}, see Remark~\ref{r:missing} and Table~\ref{t:c9-samechar-tocheck}. 
The examples coming from the cases treated in DiMuro's journal article~\cite{DiM} are those in Tables~\ref{t:c9-permmod} -- \ref{t:c9-cross-i}.   
We were able to verify independently all the entries
in Tables~\ref{t:c9-alt}--\ref{t:c9-spo}, and small cases 
in Table~\ref{t:c9-cross-i}, with the aid of the AtlasRep package~\cite{AtlasRep} in {\sf GAP} and the lists of Hiss and Malle \cite{HMc}. We give some details about these checks in Remark~\ref{rem:verification}.
There may be gaps in DiMuro's arguments if $r$ is a proper prime power\footnote{According to DiMuro, see \cite[footnote 1 on page 2098]{BM}, `there is at present a gap in the argument concerning those elements whose orders are prime powers but not prime'.} though the results for prime order elements seem essentially sound, modulo our comments in Remark~\ref{rem:verification} and Remark~\ref{r:missing}.
Deciding which of these groups contains a stingray element required, in places, very delicate analysis of these groups, using methods developed by the fourth author in \cite{DiMPZ, DiMZ, Z06, Z08, Z23}.

The subgroups of $\GL_d(q)$ studied are \emph{quasisimple} groups $G$, that is $G/\Z(G)$ is a nonabelian simple group. As in \cite{DiMThesis,DiM, GPPS} we do not specify nondegenerate forms on the natural module  $V=(\F_q)^d$ which may be left invariant by a quasisimple group $G$, and we assume that  $G$ does not contain a classical group, where: 
\begin{equation}\label{d:class}
\begin{array}{l}
  \text{By a \emph{classical group} in $\GL_d(q)$ we mean a subgroup of the form $\SL_d(q)$,}\\
 \text{$\Sp_d(q)$ ($d$ even),  $\SU_d(q_0)$ ($q=q_0^2$), $\Omega^\pm_d(q)$ ($d$ even), or $\Omega^\circ_d(q)$ ($dq$ odd),}  \\
 \text{acting on its natural module $(\F_q)^d$ or the dual module.}
\end{array}    
\end{equation}
 Field automorphisms and diagonal automorphisms leave invariant the family of $(d/2)$-stingray elements in a classical group, but the situation is more complicated for graph automorphisms. We discuss this in  Remark~\ref{r:O8spin} for the $8$-dimensional orthogonal groups $\Omega_8^{+}(q)$.
 Since the subgroups of $\GL_2(q)$ are well known, we also assume that the dimension $d\geq4$.  
Our main result Theorem~\ref{t:stingray} firstly clarifies DiMuro's classification of such quasisimple groups which contain $(d/2)$-ppd elements, and
secondly identifies those   which contain a $(d/2)$-ppd stingray element of prime order.  It turns out that  $(d/2)$-ppd stingray elements in the groups in Theorem~\ref{t:stingray} are surprisingly rare, and for reference we collect the examples in Table~\ref{t:stelts}.

\begin{theorem} \label{t:stingray}
     Let $d$ be an even integer such that $d\geq 4$, and let $q=p^a$ for a prime $p$ and positive integer $a$. Suppose that $G<\GL_d(q)$ is quasisimple, is absolutely irreducible on the natural module $V=(\F_q)^d$, does not contain a classical subgroup as defined in \eqref{d:class}, and is not realisable over a proper subfield of $\F_q$. Then the following hold.
     \begin{enumerate}
         \item[(a)] {\rm\cite[Theorem~1.1]{DiM}}\ $G$ contains a ${(d/2)}$-ppd element of prime order $r$ if and only if $d, r, q$ and the group $G$ are as in one of the Tables~$\ref{t:c9-permmod}, \ref{t:c9-alt}, \ref{t:c9-spo}, \ref{t:c9-cross-u},  \ref{t:c9-cross-i}$, or $\ref{t:c9-samechar}$. In Tables~$\ref{t:c9-cross-i}$ and~$\ref{t:c9-samechar}$ we give the simple group $S=G/\Z(G)$ rather than $G$.

         \item[(b)] A group $G$ in one of these tables contains a ${(d/2)}$-ppd stingray element of prime order $r$ if and only if the last column of the relevant line contains $\checkmark$ {\rm(}see also Table~$\ref{t:stelts}${\rm)}.
     \end{enumerate}
\end{theorem}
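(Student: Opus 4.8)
The plan is to treat the two directions of Theorem~\ref{t:stingray} essentially in parallel with a case-by-case argument organised according to DiMuro's classification, reducing everything to arithmetic conditions on eigenvalue multiplicities of the relevant element $g$. Part~(a) is already quoted as \cite[Theorem~1.1]{DiM} for the cases DiMuro treated, so the real work is (i) completing the same-characteristic case omitted from \cite{DiM}, which is done separately in Section~\ref{sec:samechar} and feeds Table~\ref{t:c9-samechar}, and (ii) establishing part~(b), the stingray refinement. I would begin by recalling Remark~\ref{r:e-ppd}: for $e=d/2$, an $e$-ppd element $g$ of prime order $r$ either is already an $e$-stingray element, or leaves invariant a decomposition $V=U_1\oplus U_2$ into two irreducible $e$-dimensional $\langle g\rangle$-modules. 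So the dichotomy to decide, line by line in the tables, is precisely: does $g$ fix a subspace of dimension $d/2$, or does it act as a sum of two Galois-conjugate Brauer characters of degree $d/2$ with no nontrivial fixed space?

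The key computational device is the \emph{eigenvalue multiplicity argument}: since $g$ has prime order $r$ and $r\mid q^{d/2}-1$ is a $(d/2)$-ppd prime, the eigenvalues of $g$ on $V\otimes\overline{\F_q}$ are a $1$ together with a $\langle g\rangle$-orbit pattern of primitive $r$-th roots of unity, and the multiplicity of the eigenvalue $1$ equals $\dim F$. Thus $g$ is an $(d/2)$-stingray element exactly when the multiplicity $m_1(g)$ of the eigenvalue $1$ equals $d/2$, and is the ``two-block'' type exactly when $m_1(g)\le$ something small (generically $0$ or, in twisted situations, a controlled constant). The plan is therefore: for each family in Tables~\ref{t:c9-permmod}--\ref{t:c9-samechar}, identify explicitly the class of the element $g$ of order $r$ predicted by DiMuro, then compute $m_1(g)$ from the known (ordinary or Brauer) character of the representation. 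For the alternating and symmetric groups and the sporadic entries (Tables~\ref{t:c9-alt}, \ref{t:c9-spo}, and small cases of \ref{t:c9-cross-i}) this is a finite check carried out with the AtlasRep data and the Hiss--Malle lists, as flagged in Remark~\ref{rem:verification}. For the infinite families---cross-characteristic representations of $\PSL$, $\PSU$, $\PSp$ in Tables~\ref{t:c9-cross-u}, \ref{t:c9-cross-i}, and the same-characteristic groups in Table~\ref{t:c9-samechar}---one uses the explicit formulas for the relevant Weil, unipotent, or minimal-degree characters, together with the fourth author's techniques from \cite{DiMPZ, DiMZ, Z06, Z08, Z23} for pinning down eigenvalue multiplicities of semisimple elements of prime order in small representations.

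Concretely, I would proceed in this order. \textbf{Step 1:} fix notation and record, via Remark~\ref{r:e-ppd}, the stingray/non-stingray dichotomy in terms of $m_1(g)=\dim F$. \textbf{Step 2:} dispose of the generic linear-type families (Weil representations of $\SU$, $\Sp$, permutation-module constituents for $\Alt$, etc.) by writing down the eigenvalue list of $g$ and reading off $m_1(g)$; here one expects that in most cases the ppd element is forced into the two-block configuration, which is why stingray elements turn out to be rare---this matches the statement preceding the theorem. \textbf{Step 3:} handle the same-characteristic case: classify the absolutely irreducible quasisimple $G$ of Lie type in characteristic $p$ containing a $(d/2)$-ppd element of order $r$ (Section~\ref{sec:samechar}), which also yields the four missing groups of Remark~\ref{r:missing}, and for each decide the stingray question using Steinberg/Deligne--Lusztig character values at semisimple elements. \textbf{Step 4:} finish the finitely many sporadic and small-rank exceptional lines by direct computation. \textbf{Step 5:} deal with the subtlety of graph automorphisms for $\Omega_8^+(q)$ (triality), as noted in Remark~\ref{r:O8spin}, since there the family of $(d/2)$-stingray elements is not obviously invariant; this requires checking the three $8$-dimensional representations separately. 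Assembling Steps 2--5 gives the ``if and only if'' with the $\checkmark$ column, and collecting the affirmative cases produces Table~\ref{t:stelts}.

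\textbf{The main obstacle} I anticipate is Step~3 together with the delicate cross-characteristic cases in Step~2: determining the exact multiplicity of the eigenvalue $1$ (equivalently $\dim C_V(g)$) for a semisimple element $g$ of order a $(d/2)$-ppd prime $r$ in a specific small-degree representation is genuinely hard when the representation is not a Weil or permutation module---one must know the restriction of the Brauer character to $\langle g\rangle$ precisely, not just its degree, and in borderline cases $\dim F$ can be $0$ or $1$ or $d/2$ depending on fine congruences between $r$, $q$, $d$ and the highest weight. This is exactly where the machinery of \cite{DiMPZ, DiMZ, Z06, Z08, Z23} is needed, and where a naive degree-count is insufficient; I expect the bulk of the paper's technical length to live here. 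A secondary difficulty is making sure DiMuro's prime-power gaps (footnoted above) do not contaminate the prime-order conclusions we rely on in part~(a); this is addressed by the independent verifications described in Remark~\ref{rem:verification} and by the self-contained same-characteristic treatment.
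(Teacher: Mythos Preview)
Your proposal is correct and follows essentially the same architecture as the paper: a case-by-case analysis organised by DiMuro's tables, with the stingray/non-stingray dichotomy of Remark~\ref{r:e-ppd} decided via the multiplicity of the eigenvalue~$1$ (equivalently, the Brauer character value, as formalised in Lemma~\ref{l:don2}), and an independent treatment of the same-characteristic case feeding Table~\ref{t:c9-samechar}. The one tactical refinement worth noting is that for the sporadic groups and the infinite cross-characteristic families the paper does not compute $m_1(g)$ directly but instead leverages the existing \emph{almost cyclic} classifications of \cite{DiMPZ,Z08}: it shows that $g$ is cyclic (type~(2.i), all eigenvalues distinct) or not almost cyclic (type~(2.ii)), either of which immediately rules out the stingray case without a character computation; similarly, in the same-characteristic analysis the paper invokes \cite[Theorems~1.4 and~5.11]{Z23} to force $H=\SL_2(q')$ or a short explicit list in~\eqref{e:S}, rather than evaluating Deligne--Lusztig characters.
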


\begin{table}
  \caption{Groups containing a $(d/2)$-ppd stingray element $g$ of order $r$ in Theorem~\ref{t:stingray}}
  \vskip 2mm
\begin{tabular}{llllll}
  \toprule
Table &$d$ &$r$ &  $G$ &  Comments\\
    \midrule
\ref{t:c9-permmod} &$\geq5$ & $d/2+1$ &  $A_{n}$, with $n=d-\delta$ & deleted permutation module \\
                 & &         &  & $q=p$, $\delta=1$ if $p\nmid n$, else $\delta=2$\\
\ref{t:c9-alt}   & $4$  &$3$ &  $\nonsplit{2}{A_5}, \nonsplit{2}{A_6}, \nonsplit{2}{A_7}$&  $p\geq5$\\
                &  $4$     & $3$    &$A_7$ &$q=2$, $g$ in Class 3B  \\
 \ref{t:c9-cross-u}   & $4$  &$3$ &  $\nonsplit{2}{\PSL_2(7)}, \nonsplit{2}{\PSU_4(2)}$ & $g$ in Class 3A, Class 3D resp.\\
                   & $8$  &$5$ &  $\nonsplit{2}{\POm_8^+(2)}$& $g$ in Class 5A\\
 \ref{t:c9-samechar}   & $4$  &$3$  & $\SL_2(q)$& $q\equiv 2\pmod{3}$ \\
                       & $4$  &$r\mid p^{a'}+1$  & $\PSL_2(q)$&  $q=p^{a'c}$, $c$ odd, $c>1$ \\
\bottomrule
\end{tabular}
\label{t:stelts}
\end{table}

\subsection{Some comments and an algorithmic application}\label{sub:alg}

The family of subgroups in Aschbacher's classification~\cite{Asch} corresponding to the subgroups $G$ in Theorem~\ref{t:stingray} is slightly larger than just the quasisimple ones. The family is often called $\cS$ or $\cC_9$,  and its members are  subgroups $H<\GL_d(q)$ such that the socle $S$ of $H$ modulo scalars is a nonabelian simple group. Moreover, $S$ has a quasisimple preimage $G$ in $H$ which is absolutely irreducible, and if $|H|$ is divisible by a $(d/2)$-ppd prime $r$, then $r$ divides $|G|$ and Theorem~\ref{t:stingray} applies, see Lemma~\ref{l:c9}.  We discuss this classification a little more in Subsection~\ref{s:alg2}.

The organisation of the subgroups of classical groups provided by Aschbacher's Theorem has been a useful means of attacking numerous problems concerning classical groups, especially questions about generation. We comment here about a particular question of this type. It relates to the main result of \cite[Theorem 2]{PSY} which provided a foundation for justifying new constructive recognition algorithms for classical groups in even characteristic which were effective where methods utilising involution centralisers were not available, see for example \cite{DLLOB}.

The result \cite[Theorem 2]{PSY} showed that, for a classical subgroup $X_d(q)\leq\GL_{d}(q)$ and stingray element $g\in X_d(q)$ as in Hypothesis~\ref{hyp:app} below,   there is a positive constant $c$ such that, with probability at least $c$, $g$ together with a uniformly distributed random conjugate $g^x$ generates $X_d(q)$ if $X_d(q)$ is not a symplectic group with $q$ even, or $\langle g,g^x\rangle=\Omega_d^\epsilon(q)$ if it is.
The proof involved estimating the probabilities that $\langle g,g^x\rangle$ lies in the various kinds of maximal subgroups of $G$, and these estimates were satisfactory for large dimensions $d$. However there was a problem  for small $d$ in getting a satisfactory estimate for the bound $p_9(X_d(q),g)$ on the probability  that $\langle g,g^s\rangle$ lies in a quasisimple $\cC_9$ subgroup (namely for the quantity `$p_9(X,d/2,q)$' defined in \cite[(4) on p.\,61]{PSY}, see also \cite[Section 13]{PSY}).  In fact no estimate was obtained when $d<18$,  see \cite[Lemma 13.1]{PSY}. Applying Theorem~\ref{t:stingray} allows us to provide a satisfactory estimate for $p_9(X_d(q),g)$ for smaller dimensions $d$, which  in all cases is an improvement of the estimates in \cite{PSY}. 

Assuming $d>8$ in Hypothesis~\ref{hyp:app} simplifies the proof of Theorem~\ref{t:c9}.

\begin{hypothesis}\label{hyp:app}
Let $Y=X_{d}(q)$ be a classical group  {\rm(}as in~\eqref{d:class}{\rm)} with $d$ even, $d>8$, and $X\in\{\SL,\SU,\Sp,\SO^\epsilon\}$, such that $Y$ contains a  $(d/2)$-ppd stingray element $g$ of ppd prime order $r$.  
\begin{itemize}
    \item Such elements exist in $Y$ when $X=\SL$ if and only if 
    $(d,q)\ne (12,2)$. For  such pairs $(d,q)$ they also exist in $Y$ if $X=\SU,\Sp,\SO^\epsilon$ and $d/2$ is odd, even, even, respectively (but not for the other parities of $d/2$).
\end{itemize}
We also consider $Y=X_{12}(2)$ with $X\in\{\SL,\Sp,\SO^\epsilon\}$, where $Y$ contains a  $6$-stingray element $g$ of order $9$.  For all these groups $Y$ and elements $g$, let
\begin{itemize}
    \item $p_9(Y,g)$ be an upper bound on the probability that $g$ and a uniformly distributed random conjugate $g^x$ ($x\in Y$) both lie in  a quasisimple $\cC_9$ subgroup $G$ of $Y$ such that $G$ is not realisable over a proper subfield, and is not contained in a proper classical subgroup of $Y$.
\end{itemize}

\end{hypothesis}

\noindent

\begin{theorem}\label{t:c9}
    Let $Y=X_d(q)$, $g, r, G$, and $p_9(Y,g)$ be as in Hypothesis~$\ref{hyp:app}$, and  
    suppose that $d>8$ and $p_9(Y,g)>0$. Then $d\equiv 0\pmod{4}$, $r=d/2+1$, $q=p$, and $Y=\Sp_d(2)$ (with $p=2$) or $\SO^\epsilon_d(p)$. Moreover,  $p_9(Y,g)\leq q^{-d^2/4+ 2d+3}$.    
\end{theorem}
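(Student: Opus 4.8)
The plan is to read off the candidates for $G$ from Theorem~\ref{t:stingray}, deduce the asserted restrictions on $d,r,q,Y$, and then prove the probability bound by a conjugacy count.

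\emph{Reduction to $G\cong A_n$.}
Suppose $p_9(Y,g)>0$. Then some quasisimple $\cC_9$ subgroup $G$ of $Y$, not realisable over a proper subfield and not contained in a proper classical subgroup of $Y$, contains $g$. Assume first that $|g|=r$ is prime. Then $G$ contains the $(d/2)$-ppd stingray element $g$ of prime order $r$, so by Lemma~\ref{l:c9} the hypotheses of Theorem~\ref{t:stingray} hold for $G$ and $(G,d,r,q)$ appears in one of the lines collected in Table~\ref{t:stelts}. Since $d$ is even and $d>8$, every such line except the deleted permutation module line of Table~\ref{t:c9-permmod} has $d\in\{4,8\}$; hence $G\cong A_n$ acting on a deleted permutation module of dimension $d$, with $q=p$ and $r=d/2+1$. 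As that module has dimension $n-1$ or $n-2$ according as $p\nmid n$ or $p\mid n$, we get $n\in\{d+1,d+2\}$ and hence $n-r\in\{d/2,d/2+1\}$. Since $r$ is a ppd prime of $p^{d/2}-1$ with $d/2\ge5$, it is odd, so $d/2=r-1$ is even; thus $d\equiv0\pmod{4}$, and in particular $d\ge12$, $r\ge7$.

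\emph{Identifying $Y$, and the case $(d,q)=(12,2)$.}
Since $q=p$ is not a square, $Y\ne\SU_d(q_0)$ (and $d/2$ even would contradict the $\SU$ case of Hypothesis~\ref{hyp:app} in any event). The deleted permutation module of $A_n$ carries a non-degenerate $G$-invariant symmetric bilinear form. If $p$ is odd this form is not alternating, so $G$ preserves a non-degenerate quadratic form and, being perfect, lies in $\SO^\epsilon_d(p)$; this rules out $Y=\SL_d(p)$ and leaves $Y=\SO^\epsilon_d(p)$. If $p=2$ the form is alternating, so $G\le\Sp_d(2)$, again ruling out $Y=\SL_d(2)$; then $Y=\SO^\epsilon_d(2)$ when the module admits a $G$-invariant quadratic form (which also determines $\epsilon$), and $Y=\Sp_d(2)$ otherwise. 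This proves the first assertion. In the remaining case $(d,q)=(12,2)$ with $|g|=9$: a relevant $G$ would be a quasisimple $\cC_9$ subgroup of $\GL_{12}(2)$ containing an element of order $9$ whose fixed space on $\F_2^{12}$ is $6$-dimensional and which acts irreducibly on a complementary $6$-space; inspecting the quasisimple absolutely irreducible subgroups of $\GL_{12}(2)$ (via \cite{HMc} and \cite{AtlasRep}, cf.\ Remark~\ref{rem:verification}; for instance the order-$9$ elements of $A_{13}$ and $A_{14}$ have $4$-dimensional fixed space) shows no such $G$ exists, so $p_9(Y,g)=0$ and the statement holds trivially.

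\emph{The probability bound.}
Fix $G\cong A_n$ as above inside $Y$. The $(d/2)$-ppd stingray elements of prime order $r$ in $G$ are exactly the $r$-cycles, $n!/(r\,(n-r)!)$ in number, so $|g^Y\cap G|\le n!/(r\,(n-r)!)$; and for a relevant $G$ containing $g$ we have $\Pr_x[g^x\in G]=|g^Y\cap G|/|g^Y|$. The relevant subgroups meeting $g^Y$ are all of this shape and fall into a bounded number $c$ of $Y$-classes, and $N_Y(G)\ge G$, so $Y$ contains at most $2c\,|Y|/n!$ of them; by homogeneity the number $N$ containing the fixed $g$ satisfies $N\,|g^Y|=\sum_{G}|g^Y\cap G|\le(2c\,|Y|/n!)\cdot n!/(r\,(n-r)!)$, whence $N\le 2c\,|C_Y(g)|/(r\,(n-r)!)$. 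A union bound over these $G$ then gives
\[
p_9(Y,g)\ \le\ N\cdot\frac{n!/(r\,(n-r)!)}{|g^Y|}\ \le\ \frac{2c\,n!\,|C_Y(g)|^2}{r^2\,((n-r)!)^2\,|Y|}\,.
\]
Since $g$ is a $(d/2)$-ppd stingray element, $U$ is non-degenerate of dimension $d/2$, $F=U^\perp$, and $C_Y(g)$ is the direct product of a cyclic maximal torus of order $q^{d/4}+1$ of the classical group on $U$ with the classical group of the same type on $F$. Inserting the orders of $\Sp$ and $\SO^\epsilon$ and using $n-r\le d/2+1$, $r=d/2+1$, and $d\ge12$, a short computation bounds the right-hand side above by $q^{-d^2/4+2d+3}$, with room to spare; the constant $c$ (bounded in each case by an absolute constant, at most $|\Out(Y)|$ times the number of $\Aut(Y)$-classes of such subgroups, which is small since $q=p$ is prime) is comfortably absorbed.

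\emph{Main obstacle.}
The counting has generous slack, so the exact value of $c$ and the polynomial-in-$d$ factors are harmless. The substantive work is in the qualitative steps: making the reduction to $G\cong A_n$ completely rigorous (checking that all hypotheses of Theorem~\ref{t:stingray} hold for $G$, and that no line of Table~\ref{t:stelts} with $d>8$ was overlooked), deciding exactly when the deleted permutation module of $A_n$ over $\F_2$ carries an invariant quadratic form — hence whether $Y=\Sp_d(2)$ or $Y=\SO^\epsilon_d(2)$, and which $\epsilon$ — and the finite verification in the case $(d,q)=(12,2)$, $|g|=9$.
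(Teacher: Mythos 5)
Your structural reduction coincides with the paper's: both apply Theorem~\ref{t:stingray} to see that for $d>8$ only the deleted permutation module line of Table~\ref{t:stelts} survives, giving $G\cong A_{d+\delta}$, $r=d/2+1$, $q=p$ and $d\equiv0\pmod4$, and both then read off $Y$ from the invariant form on that module (the paper cites \cite[pp.\,186--7]{KL} where you argue the form directly). You diverge in two places. First, for the exceptional case $(d,q)=(12,2)$, $|g|=9$, the paper does not merely ``inspect'' $\GL_{12}(2)$: it proves a dedicated statement (Proposition~\ref{p:122}) that pins down the four candidates for $H^{(\infty)}$ using the tables of \cite{BHRD} plus an analysis of $\cC_2$/$\cC_4$ overgroups and a {\sc Magma} check, before invoking \cite[Lemmas 4.3 and 4.4]{PSY} for the fixed-space dimensions you quote; your sketch names the right facts, but the classification of candidates is the substantive step and you correctly flag it as outstanding. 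Second, for the probability bound the paper simply cites \cite[Lemma 4.6]{PSY}, whereas you rederive it by a double-counting/union-bound scheme (boundedly many $Y$-classes of relevant $A_n$'s, homogeneity to bound $N$, the $r$-cycle count to bound $|g^Y\cap G|$). That scheme is sound and the inequality does hold: with $|C_Y(g)|$ of order $q^{d^2/8+d/2}$, $|Y|$ of order $q^{d^2/2+d/2}$, and $n!/((n-r)!)^2\approx (d+1)\binom{d}{d/2}$, your expression is roughly $c\,2^{d}q^{-d^2/4+d/2}$ against the target $q^{-d^2/4+2d+3}$, with ample slack already at $d=12$. But you assert rather than perform that computation, so as written your argument still needs the final page of arithmetic (or the citation the paper uses). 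Your route buys self-containedness; the paper's buys brevity by outsourcing the count to a lemma already verified in \cite{PSY}.
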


We prove Theorem~\ref{t:c9} in Section~\ref{s:tc9}.
The results in this paper will be applied further to analyse new algorithms for recognition of finite classical matrix groups, which in particular will  provide a  useful improvement on the complexity of the state-of-the-art recognition algorithms for finite classical groups in \cite{DLLOB}, see \cite{GNP2022b,GNP2023}.
They may also be useful to answer questions about generation, for example,
generation of geometric maximal subgroups of classical groups, generalising~\cite{BEGHM} 

\section{Tables of results for Theorem~\ref{t:stingray}}

In this section we give tables of results for Theorem~\ref{t:stingray}. In all cases $G<\GL_d(q)$ is quasisimple and contains a $(d/2)$-ppd element $g$ of ppd prime order $r$. \emph{The condition $o_r(q)=d/2$ must hold in all cases}, and other conditions on $q=p^a$, or the prime $p$, in the tables are usually in addition to this. 

\begin{enumerate}
    \item The first two tables are for the cases where the simple group $G/\Z(G)$ is an alternating group. They relate to Propositions~\ref{p:delperm} and~\ref{p:alt2}, respectively, with Table~\ref{t:c9-permmod} for actions on the deleted permutation module for the natural action of $A_n$, and Table~\ref{t:c9-alt} for all other examples involving alternating groups.

    \item Table~\ref{t:c9-spo} gives the examples where $G/\Z(G)$ is a sporadic simple group, determined in Proposition~\ref{p:c9-spo}.

    \item Tables~\ref{t:c9-cross-u} and~\ref{t:c9-cross-i} give the examples where $G/\Z(G)$ is a Lie type simple group over a field of order $s$, $G/\Z(G)$ is not isomorphic to an alternating or symmetric group,  and the field order $s$ is not a power of $p$. They are determined in Proposition~\ref{p:cross}. Table~\ref{t:c9-cross-u} contains individual examples, while Table~\ref{t:c9-cross-i} gives infinite families of examples. 
    
    \item Table~\ref{t:c9-samechar}  gives the examples where $G/\Z(G)$ is a Lie type simple group in characteristic $p$, and is not isomorphic to an alternating or symmetric group. They are determined in Proposition~\ref{p:same}.  
    
\end{enumerate}

\begin{remark}\label{rem:verification}
We describe here the procedure we took to verify all entries
in Tables~{\ref{t:c9-alt}--\ref{t:c9-spo}}, and small cases
in Table~\ref{t:c9-cross-i}.
We used the lists by Hiss and Malle \cite{HMc} of  absolutely irreducible representations of degree up to $2500$ 
of finite quasi-simple groups, excluding groups of that are of Lie type in their defining characteristic.  
We extracted all groups $G$ from these lists that have  a representation in
an even dimension $d$ and whose
order is divisible by a prime $r$
for which $r\equiv 1 \pmod{d/2}.$
In addition, we performed the same
steps for the representations of quasi-simple groups related to $\PSL_2(q)$ for  prime powers $q $ with $q\le 1000$ as described in \cite[Table~2(b)-(d)]{HM}.
In cases where the characteristic of the representation is known,
we determined, with the help of the AtlasRep package~\cite{AtlasRep} in {\sf GAP}
\cite{GAP}, the smallest field afforded by  a given representation. \qed
\end{remark}

\clearpage

\begin{table}
\caption{ $G=S=A_n$ on the fully deleted permutation module over $\F_p$ (Proposition~\ref{p:delperm})}
\vskip 2mm
\begin{tabular}{lllccc}
  \toprule
Line &$r$ &$n$ & Cycle type of $g$ & $q=p$ & stingray? \\
    \midrule
1 &$d+1$ & $d+1$ & $n^1$ & $p\nmid n$& $\times$\\
2 &      & $d+2$ & $(n-1)^11^1$ & $p\mid n$& $\times$\\
3 &$d/2+1$ & $d+1$ & $r^11^{n-r}$ & $p\nmid n$& $\checkmark$\\
4 &     & $d+2$ & $r^11^{n-r}$ & $p\mid n$& $\checkmark$\\
5 &$d/2+1$ & $d+2$ & $r^2$ & $p=2$& $\times$\\
\bottomrule
\end{tabular}
\label{t:c9-permmod}
\end{table}

\begin{table}
\caption{The alternating (covering) groups not in Table~\ref{t:c9-permmod} (Proposition~\ref{p:alt2})}
\vskip 2mm
\begin{tabular}{llllllc}
  \toprule
Line &$G$ &$d$ & $r$&  $q=p^a$ &Comments& stingray? \\
    \midrule
1 &$\nonsplit{2}{A_5}, \nonsplit{2}{A_6}$ & 4 &  $3$ & $p\geq 5$& one class&$\checkmark$\\
  &        &  &  $5$ & $p\geq7$&&$\times$\\
2 &$\nonsplit{2}{A_7}$ & 4 & $3$ &$p\geq 5$&class $3B$&$\checkmark$\\
  &        &  & $5,7$ & $p\ne 2$ &&$\times$\\
3 &$A_7$ & 4 & $3$ &$q=2$ & class $3B$&$\checkmark$\\
4 &$\nonsplit{2}{A_7}$  & 6 & $7$ &
$q=9$
&&$\times$\\
5 &$\nonsplit{3}{A_7}$  & 6 & $7$ & $p\ne 3$ &&$\times$\\
6 &$\nonsplit{6}{A_7}$  & 6 & $7$ &$p\geq 5$ &&$\times$\\
7 &$A_9$ & 8 & $5$ &
$q=2$
&&$\times$\\
8 &$\nonsplit{2}{A_8}$ or $\nonsplit{2}{A_9}$ & 8 &  $5$ & $p\ne 2$&&$\times$\\
\bottomrule
\end{tabular}
\label{t:c9-alt}
\end{table}

\begin{table}[!ht]
\caption{Sporadic quasisimple  groups (Proposition~\ref{p:c9-spo})}
  \vskip 2mm
\begin{tabular}{clrrlc}
  \toprule
Line &$G$ &$d$ & $r$& $q=p^a$ & stingray?\\
    \midrule
1  &$\Mathieu_{11}$           & 10 & 11 & any $p$                                  & $\times$ \\
   &$\Mathieu_{12}, \nonsplit{2}{\Mathieu_{12}}$ & 10 & 11 & 
   $q=$
   for $\Mathieu_{12}$, $p\geq 3$  for $\nonsplit{2}{\Mathieu_{12}}$ & $\times$ \\
2  &$\nonsplit{3}{\Mathieu_{22}}$ & 6 & 7 & 
$q=4$& $\times$ \\
3  &$\nonsplit{2}{\Mathieu_{22}}$ & 10 & 11 &  
$p\geq 3$ & $\times$ \\
4  &$\Mathieu_{23}, \Mathieu_{24}$ & 22 & 23 &  $p\ne 2$ for $\Mathieu_{23}$, $q=3$
for $\Mathieu_{24}$ & $\times$ \\
5  &$\Janko_{1}$ & 20 & 11 &  
{$q=2$} & $\times$ \\
6  &$\Janko_{2}, \nonsplit{2}{\Janko_{2}}$ & 6 & 7 & {$q=4$}
for $\Janko_{2}$, $p\geq 3$ for $\nonsplit{2}{\Janko_{2}}$ & $\times$ \\
7  &$\Janko_{3}, \nonsplit{3}{\Janko_{3}}$ & 18 & 19 & {$q=9$}
for $\Janko_{3}$, $p\ne 3$ for $\nonsplit{3}{\Janko_{3}}$ & $\times$ \\
8  &$\Co_{1}, \nonsplit{2}{\Co_{1}}$ & 24 & 13 &  {$q= 2$} for $\Co_{1}$, $p\geq 3$ for $\nonsplit{2}{\Co_{1}}$ & $\times$ \\
9  &$\Co_{2}, \Co_{3}$ & 22 & 23 &  {$q=2$}
for $\Co_{2}$, {$q=2,3$}
for $\Co_{3}$ & $\times$ \\
10 &$\HS$ & 20 & 11 & {$q=2$}  
& $\times$ \\
11 &$\nonsplit{2}{\Ru}$ & 28 & 29 &  $p\geq 3$& $\times$ \\
12 &$\nonsplit{2}{\Sz}, \nonsplit{6}{\Sz}$ & 12 & 7 & {$q=3$} 
for $\nonsplit{2}{\Sz}$, $p\geq 5$ for $\nonsplit{6}{\Sz}$ & $\times$ \\
13 &$\nonsplit{3}{\Sz}, \nonsplit{6}{\Sz}$ & 12 & 13 &  {$q=4$}
for $\nonsplit{3}{\Sz}$, $p\geq 5$ for $\nonsplit{6}{\Sz}$ & $\times$ \\
\bottomrule
\end{tabular}
\label{t:c9-spo}
\end{table}

\begin{table}
\caption{Lie type quasisimple groups over $\F_s$ in cross-characteristic ($p\nmid s$): individual examples (Proposition~\ref{p:cross})}
  \vskip2mm
\begin{tabular}{clllll}
  \toprule
Line &$G$ &$d$ & $r$& {$q=p^a$}  
& stingray?\\
    \midrule
1& $\nonsplit{2}{\PSL_{2}(7)}$ & 4 & 3 & &$\checkmark$ class $3A$\\
     &             & 4 &  7 &  & $\times$ \\
2& $\nonsplit{4}{\PSL_{3}(4)}$ & 4 & 5 &
{$q=9$} &$\times$\\
3&  $\nonsplit{6}{\PSL_3(4)}$ &  6 & 7 & {$p\neq 2,3 $} & $\times$\\
4 &$\nonsplit{4}{\PSL_{3}(4)}$ & 8 & 5 & & $\times$\\
5& $\nonsplit{2}{\PSp_6(2)}$         & 8 & 5 & & $\times$\\
 & $\nonsplit{2}{{\rm P}\Omega_8^+(2)}$   & 8 & 5 & &$\checkmark$ class $5A$ \\
6& $\nonsplit{2}{G_{2}(4)}$ & 12 & 7, 13 & &$\times$, $\times$\\
7& $\nonsplit{2}{{}^2\kern-2pt B_2(8)}=\nonsplit{2}{\Sz(8)}$ & 8 & 13 & {$q=5$} 
&$\times$\\
8& $\nonsplit{2}{\PSU_{4}(2)}\cong \Sp_4(3)$ & 4 & 3 &$p\ge5$& $\checkmark$ class $3D$\\
 &                               & 4 & 5 && $\times$\\
9& $\nonsplit{3}{\PSU_{4}(3)}$ & 6 & 7 & {$q=4$}  
&$\times$\\
 & $\nonsplit{6}{\PSU_{4}(3)}$ & 6 & 7 & $p\ne 2,3$  &  $\times$ \\
 & $\PSU_3(3)$     & 6 & 7 &   &$\times$\\
\bottomrule
\end{tabular}
\label{t:c9-cross-u}
\end{table}

\begin{table}
\caption{Lie type quasisimple groups over $\F_s$ in cross-characteristic ($p\nmid s$, $q=p^a$): infinite families (Proposition~\ref{p:cross})}
  \vskip2mm
  \renewcommand{\arraystretch}{1.5} 
\begin{tabular}{cllllc}
  \toprule
Line &$S$ &$d$ & $r$& Conditions on $n,s$& stingray? \\
    \midrule
1& $\PSp_{2n}(s)$ & $\frac{s^n-1}{2}$ & $\frac{s^n+1}{2}$ & $n=2^c\geq2$, $s=u^{2^{c'}}$, & $\times$  \\
       & &  &     &$u$ an odd prime, $c'\geq0$ &\\
2& $\PSU_n(s)$ & $\frac{s^n-s}{s+1}$ & $\frac{s^n+1}{s+1}$& $n$ an odd prime & $\times$ \\
3& $\PSL_n(s)$ & $\frac{s^n-1}{s-1}-1$ & $\frac{s^n-1}{s-1}$& $n$ an odd prime & $\times$ \\
4& $\PSL_{2}(s)$ & $s$ & $s+1$ & $s=2^{2^c}\geq16$, $r$ a Fermat prime & $\times$\\
5& $\PSL_{2}(s)$ & $s-1$ & $\frac{s+1}{2}$ & $s=t^{2^{c}}$, $t$ an odd prime, $c\geq1$& $\times$\\
      &     &       &  $\frac{s+1}{2}$, $s$ & $s\geq7$, $s$ a prime& $\times$\\
6& $\PSL_{2}(s)$ &  $\frac{s-1}{2}$ &  $\frac{s+1}{2}$ & $s=t^{2^{c}}$, $t$ an odd prime, $c\geq1$& $\times$\\
      &     &       &  $\frac{s+1}{2}$, $s$ & $s\geq11$, $s$ a prime& $\times$\\
\bottomrule
\end{tabular}
\label{t:c9-cross-i}
\end{table}

\begin{table}
  \caption{Defining characteristic Lie type quasisimple groups (Proposition~\ref{p:same}). The primitive prime divisor $r$ of $q^{d/2}-1$ is not usually stated}
  \vskip2mm
\begin{tabular}{clclc}
  \toprule
Line &$S$ & $d$  & Comments& stingray? \\
    \midrule
1 &$\PSL_2(q^4)$   & $16$  & tw. tensor product subgroup \cite[Corollary 2.2]{S99}&$\times$\\
2 &$\PSL_4(q^2)$   & $16$  & tw. tensor product subgroup \cite[Table 1B, line 1]{S99} &$\times$\\
3 &$\PSU_m(q)$   & $20, 28$  & $(m,V)=(6,\Lambda^3 V_0)$, $(7,S^2V_0)$&$\times$\\
 &   & $28, 36$  & $(m,V)=(8,\Lambda^2 V_0)$, $(9,\Lambda^2V_0)$&$\times$\\
4 &$\PSU_m(q^{1/2})$   & $6,10$  & $(m,V)=(3, S^2 V_0)$, $(5,\Lambda^2V_0)$&$\times$\\
5 &$\PSp_4(q^2)$   & $16$ & tw. tensor product subgroup \cite[Table 1B, line 5]{S99}&$\times$\\
6 &$\PSL_2(q)$   & $4$  & in $\Sp_4(q)$, $q=p^a\geq7, p\geq 5$, $\exists$\ stingray elements iff & $\checkmark$\\
    &&&\hspace{1cm} $G=\SL_2(q)$, $q\equiv 2\pmod{3}$ and $r=3$, or&\\
    &&&\hspace{1cm}  $G=\PSL_2(q)$, $q=p^{a'c}$, $c$ odd, $c>1$, $r\mid p^{a'}+1$&\\
7 &$\PSL_m(q)$   & $6,10$  & $(m,V)=(3, S^2 V_0)$, $(5,\Lambda^2V_0)$&$\times$\\
8 &$\POm^\circ_m(q)$   & $8,16$  & $m=7,9$, $q$ odd, $V$ a spin module&$\times$\\
$9$ &$\POm^-_8(q^{1/2})$   & $8$  & $(1/2)$-spin representation in $\Omega_8^+(q)$& $\times$\\
10 &$\PSp_m(q)$   & $8, 16$  & $m=6,8$, $q$ even, $V$ a spin module&$\times$\\
11 &$\POm^+_{10}(q)$   & $16$  & $V$ a spin module &$\times$\\
12 &$G_2(q)$   & $6$ & $G_2(q)<\Sp_6(q)$, $q>p=2$&$\times$\\
13& ${}^3\kern-1pt D_4(q^{1/3})$ & $8$ &  ${}^3\kern-1pt D_4(q^{1/3})< \Omega_8^+(q)$&$\times$\\
\bottomrule
\end{tabular}
\label{t:c9-samechar}
\end{table}

\section{Stingray elements and subgroups of classical groups }

In this section we make some  observations about ppd-elements and stingray elements in classical groups, and we prove Theorem~\ref{t:c9}.

\subsection{Aschbacher's class \texorpdfstring{$\cC_9$}{} (or \texorpdfstring{$\cS$}{}) }\label{s:generallemma}

In \cite[Section 1]{Asch}, Aschbacher defined eight families  $\cC_1,\dots,\cC_8$  of proper subgroups of a finite classical group  on $V=\F_q^d$ according to various structures on $V$ such subgroups preserved, see also~\cite[Definition~2.1.3]{BHRD}. Collectively, these eight families are often referred to as \emph{geometric subgroups} of the classical group. For example, the family $\cC_1$ essentially consists of subspace stabilisers.  Aschbacher then proved~\cite[Theorem]{Asch} that each proper subgroup $H$ of a classical subgroup of $\GL_d(q)$ on $V=\F_q^d$, as defined in \eqref{d:class}, is either contained in a subgroup lying in some $\cC_i$ with $1\leq i\leq 8$, or  the generalised Fitting subgroup of $H/(H\cap Z)$, where $Z$ is the subgroup of scalars, is a nonabelian simple group $S$ and the covering group $G$ of $S$ in $H$  is absolutely irreducible  on $V$, and its representation on $V$ is not realisable over a proper subfield of $\F_q$. In particular, the subgroup $G$ is quasisimple and satisfies the conditions of Theorem~\ref{t:stingray}. 
The collection of subgroups $H$ with these properties has come to be known as $\cC_9$, or sometimes as $\cS$, for example,  in~\cite{BHRD}. Sometimes the definition of the family $\cS$ is more prescriptive, for example in \cite{BHRD} elements of $\cS$ are assumed to be maximal in the classical group, and are assumed not to lie in any of the families $\cC_i$, $i\leq 8$.

If $H < \GL_d(q)$ lies in the class $\cC_9$ for some classical group on $V$, then the quasisimple group $G$ described above is equal to  $H^{(\infty)}$, the last term in the derived series of $H$. Here we prove Lemma~\ref{l:c9}, mentioned in Subsection~\ref{sub:alg}, which shows that any $(d/2)$-ppd element lying in $H$ must in fact lie in $G$, and hence Theorem~\ref{t:stingray} may be applied.

\begin{lemma}\label{l:c9}
   Let $d, q=p^a, V$ be as in Theorem~$\ref{t:stingray}$ so $d$ is even and $d\geq4$. Let  $Z$ be the group of scalars in $\GL_d(q)$, and suppose that $q^{d/2}-1$ has a primitive prime divisor $r$. Suppose also that $H<\GL_d(q)$ lies in the family $\cC_9$ described above, so that $G \coloneq  H^{(\infty)}$ is quasisimple and satisfies the hypotheses of Theorem~$\ref{t:stingray}$ and $S\coloneq G/(G\cap Z)$ is a nonabelian simple group. Then the $r$-parts $|H|_r=|G|_r=|S|_r$ are equal.  
\end{lemma}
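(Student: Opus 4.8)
The goal is to show that if $H$ lies in $\cC_9$ and $r$ is a $(d/2)$-ppd prime dividing $|H|$, then $r$ cannot divide the index $[H:G]$, so that $|H|_r=|G|_r$; the equality $|G|_r=|S|_r$ is then immediate since $G\cap Z$ is cyclic of order dividing $q-1$ and $r\nmid q-1$ (indeed $r$ is a ppd prime for $q^{d/2}-1$ with $d/2>1$). The first step is therefore to record that $r\nmid q-1$, hence $r\nmid|Z\cap\GL_d(q)|$ and $r\nmid|G\cap Z|$, reducing the problem to bounding the $r$-part of $[H:G]$.

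The key structural input is that $G=H^{(\infty)}$ and $H/(G\cap Z)$ is almost simple with socle $S=G/(G\cap Z)$, so $H/G$ embeds in $\Out(S)$. Thus it suffices to show that $r\nmid|\Out(S)|$. Here I would invoke the fact that $S$ is one of the simple groups appearing in Theorem~\ref{t:stingray} (equivalently, in the tables), since $G$ satisfies all the hypotheses of that theorem and contains an element of order $r$ — or, more self-containedly, I would argue directly: an $e$-ppd prime $r$ for $q^e-1$ with $e=d/2$ satisfies $r\equiv 1\pmod{e}$, so $r\ge e+1=d/2+1$, which for $d\ge 4$ gives $r\ge 3$, and in fact $r$ is reasonably large relative to $d$. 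The outer automorphism group of a finite nonabelian simple group $S$ with a faithful projective representation of degree $d$ has order bounded in terms of $d$: for alternating groups $|\Out(S)|\le 4$; for sporadic groups $|\Out(S)|\le 2$; for Lie type groups $|\Out(S)|=d_S f_S g_S$ where the diagonal part $d_S$ and graph part $g_S$ are small absolute constants and the field-automorphism part $f_S=\log_p s$ is at most $\log_2 s$, with $s$ bounded polynomially in $d$ by the minimal-degree bounds of Landazuri–Seitz. Comparing $r\ge d/2+1$ against these bounds, one checks $r>|\Out(S)|$ in every case, so $r\nmid|\Out(S)|$.

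The cleanest route, avoiding a case analysis, is to lean on Theorem~\ref{t:stingray}(a): since $G$ is quasisimple, absolutely irreducible, not realisable over a proper subfield, not containing a classical subgroup, and contains a $(d/2)$-ppd element of prime order $r$, the triple $(G,d,r)$ (or $(S,d,r)$) appears in one of Tables~\ref{t:c9-permmod}–\ref{t:c9-samechar}. For each such $S$ one reads off $|\Out(S)|$ and the relevant value (or divisibility constraint) of $r$ and verifies $r\nmid|\Out(S)|$ — e.g.\ for $S=A_n$ with $r=d/2+1=n-\delta$ one has $r\ge 5>4\ge|\Out(A_n)|$ (with the small-$n$ exceptions $A_6$ handled directly, noting $r=3$ there would need $3\mid 4$, false); for the Lie type families one uses that $r$ is a ppd prime for a power of the defining or a related characteristic and exceeds the small diagonal/graph contributions while $r\nmid p$ rules out the field part. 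The main obstacle is precisely this bookkeeping over the tables — in particular making sure the $\PSL_2$ families in Table~\ref{t:c9-cross-i} and Table~\ref{t:c9-samechar}, where $|\Out(S)|$ can contain a factor equal to a field-degree and $r\mid s\pm1$ or $r=s$, genuinely have $r\nmid|\Out(S)|$ (this holds because a ppd prime $r$ for $s^2-1$ satisfies $r>2$ and $r\nmid s-1$, while $|\Out(\PSL_2(s))|=2\cdot\gcd(2,s-1)\cdot\log_p s$ has no prime factor as large as $r$). Once $r\nmid|\Out(S)|$ is established in all cases, the tower $G\le H$ with $H/G\hookrightarrow\Out(S)$ forces $|H|_r=|G|_r$, and combined with $r\nmid|G\cap Z|$ we conclude $|H|_r=|G|_r=|S|_r$. $\qed$
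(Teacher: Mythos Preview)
Your overall reduction is right: everything comes down to showing $r\nmid|\Out(S)|$, since $r\nmid q-1$ handles the centre and $H/G\hookrightarrow\Out(S)$ handles the top. But both of the routes you sketch have real problems.

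\medskip
\textbf{The ``cleanest route'' via Theorem~\ref{t:stingray}(a) is circular.} To invoke that theorem you need $G$ to contain a $(d/2)$-ppd element of prime order $r$, i.e.\ $r\mid|G|$. But at this stage you only know $r\mid|H|$; the whole point is to rule out the possibility that the $r$-part of $|H|$ lives entirely in $H/G$. If $r\mid|H|$ and $r\nmid|G|$ then Theorem~\ref{t:stingray}(a) gives you nothing about $S$, and you cannot read off $|\Out(S)|$ from the tables. So this route cannot close the argument.

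\medskip
\textbf{The direct route mishandles the defining-characteristic case.} Your sentence ``$r\nmid p$ rules out the field part'' is not correct: the field-automorphism contribution to $|\Out(S)|$ has order $b$ where $S$ is defined over $\F_{p^b}$, and $r\ne p$ says nothing about whether $r\mid b$. In defining characteristic there is no Landazuri--Seitz bound tying $b$ (or $s=p^b$) to $d$; for instance $\PSL_2(q)$ sits in $\GL_4(q)$ with $|\Out(\PSL_2(q))|$ growing with $\log_p q$ while $d=4$ is fixed. The paper's argument here is genuinely different: assuming $r$ divides the order of a field automorphism $\sigma$ of $S$, one observes that the module $V$ is $\sigma$-invariant, and then Steinberg's twisted tensor product theorem together with \cite[Proposition~5.4.2(i)]{KL} forces $d\ge 2^r$. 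This contradicts $r-1\ge d/2\ge 2^{r-1}$. You need this (or an equivalent) representation-theoretic input; order-counting alone does not suffice in defining characteristic.

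\medskip
A smaller point: in cross characteristic you assert $r>|\Out(S)|$, but what Landazuri--Seitz actually yields (and what the paper uses) is the weaker inequality $r=d/2+1>|\Out(S)|_{2'}$. Since $r$ is odd this is enough, but the stronger inequality you state can fail for small $d$.
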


\begin{proof}
  Since $H$ is absolutely irreducible, it follows that $S\leq H/(H\cap Z)\leq \Aut(S)$, so $|H|=|H\cap Z|\cdot|S|\cdot x$, where $x$ divides $|\Out(S)|$; also
 $|G|=y\cdot|S|$, where $y$ divides $|H\cap Z|$. There is nothing to prove if $r$ does not divide $|H|$, so suppose that $r$ divides $|H|$.  
  Since $o_r(q)=d/2\geq 2$,
  the prime $r$ does not divide $|Z|=q-1$, and hence $|G|_r=|S|_r$ and $r$ divides $|S|\cdot x$. Also $r\geq d/2+1\geq3$, so $r$ is odd.   To prove the lemma we assume to the contrary that $r$ divides $|\Out(S)|$ and derive a contradiction. First, $S$ is not isomorphic to an alternating group or a sporadic simple group since in these cases $\Out(S)$ is a $2$-group. Thus $S$ is a Lie type simple group over a field of order $s$. If $s$ is coprime to $p$, then recalling that $r\geq d/2+1$, it follows from the results of Landazuri and Seitz~\cite{LS}, that $d/2+1$ is strictly greater than  the odd part $|\Out(S)|_{2'}$ (see Table~5.3.A in \cite{KL}). Thus $s=p^b$ for some $b$, and $r$ divides the order of a field automorphism of $S$. Arguing as in \cite[last paragraph of Section 7]{GPPS}, since the module $V$ is invariant under this field automorphism, it follows from \cite[Proposition 5.4.2(i)]{KL} and Steinberg's twisted tensor product theorem \cite[Theorem 5.4.5]{KL}  that $d\geq 2^r$. Hence $r-1\geq d/2\geq 2^{r-1}$, which is a contradiction since $r$ is an odd prime. Thus $r$ is coprime to $|\Out(S)|$ and $|H|_r=|G|_r=|S|_r$, as claimed.
\end{proof}

\subsection{An extension of Theorem~\ref{t:stingray}}\label{s:alg2}

As preparation for the proof of Theorem~\ref{t:c9}, we give in Proposition~\ref{p:122} an equivalent of Theorem~\ref{t:stingray} for the case where $(d,q)=(12,2)$ and $6$-stingray elements of order $9$. We note that, while $2^6-1$ does not have a primitive prime divisor, the integer $9$  is a \emph{primitive prime-power divisor} of $2^6-1$ in the sense that it does not divide $2^i-1$ for any $i<6$.

In the proof of Proposition~\ref{p:122}, 
we use the definitions of the Aschbacher classes $\cC_1,\dots,\cC_8$ given  in~\cite[Definition~2.1.3]{BHRD}; and we recall that the class $\cS$ given in~\cite{BHRD} comprises groups which are maximal in the classical group, and are not subgroups of any group in $\cup_{i\leq 8}\cC_i$. 
Recall also that a subgroup $H$ of $\GL_{12}(2)$ is a $\cC_9$-subgroup precisely when  $H^{(\infty)}$ is quasisimple and  absolutely irreducible on the natural module $\F_{2}^{12}$, and $H$ does not contain a classical subgroup of $\GL_{12}(2)$ as defined in \eqref{d:class}. (The condition that the representation is not realisable over a proper subfield is vacuously true for the field $\F_2$.) 
In particular, since $H^{(\infty)}$ is absolutely irreducible, $H$ is not contained in a subgroup of $\cC_3$; and since $d=12$ is not a prime power or a proper power, $H$ is not contained in a subgroup of $\cC_i$ for $i=6,7$. Thus if a $\cC_9$-subgroup $H$ of $\GL_{12}(2)$ is contained in a subgroup in $\cC_i$ for some $i\leq 8$, then $i=2$ or $i=4$.

Note that the $\cC_9$-subgroup $H$ may not be maximal in $\GL_{12}(2)$, or
in any classical subgroup of $\GL_{12}(2)$. Also, since $\GL_{12}(2)$
contains no scalar matrices other than $I_{12}$ any absolutely irreducible
subgroup of $\GL_{12}(2)$ has a trivial center. Thus the absolutely
irreducible quasisimple group $H^{(\infty)}$ must be a simple group $S$, and the
possibilities for $S$ are severely constrained by number theory
($|S|$ divides $|\GL_{12}(2)|$) and representation~theory.

\begin{proposition}\label{p:122}
Suppose that $H$ is a $\cC_9$-subgroup of $\GL_{12}(2)$, that is, $H^{(\infty)}$ is quasisimple and is absolutely irreducible on $V=\F_2^{12}$, and $H$ does not contain a classical subgroup of $\GL_{12}(2)$ as defined in $\eqref{d:class}$.
Then 
\[
H^{(\infty)}\in\{
\PSL_2(25), A_{14}, A_{13}, \PSL_3(3)\}.
\]
However, none of these possibilities for $H$ contains a
$6$-stingray element of order $9$.
\end{proposition}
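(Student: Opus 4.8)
The plan is to split the proof of Proposition~\ref{p:122} into two parts: first, narrowing down $H^{(\infty)}$ to the four listed simple groups; second, showing that none of the resulting embeddings contains a $6$-stingray element of order $9$. For the first part, I would begin by invoking Theorem~\ref{t:stingray} (together with the surrounding discussion): since $9$ is a primitive prime-power divisor of $2^6-1$, any absolutely irreducible quasisimple subgroup of $\GL_{12}(2)$ containing an element of order $9$ acting with a $6$-dimensional irreducible constituent on $V(1-g)$ must be constrained. However, because $9$ is not prime, I cannot directly cite the prime-order classification; instead I would argue as in the $(d,q)=(12,2)$ discussion already in the excerpt. The key point is that $|H^{(\infty)}| = |S|$ must divide $|\GL_{12}(2)| = 2^{66}\prod_{i=1}^{12}(2^i-1)$, and $S$ must have an absolutely irreducible $12$-dimensional representation over $\F_2$ not realisable over a proper subfield (vacuous here). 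Running through the classification of quasisimple groups with a faithful absolutely irreducible representation of dimension $12$ in characteristic $2$ — using the Hiss--Malle lists \cite{HMc} and the tables for $\PSL_2$-related groups in \cite{HM} referenced in Remark~\ref{rem:verification} — together with the order-divisibility constraint leaves exactly $\PSL_2(25)$, $A_{14}$, $A_{13}$, $\PSL_3(3)$. (Groups of Lie type in characteristic $2$ of the right dimension either contain a classical subgroup, fail the dimension bound, or are excluded by Section~\ref{sec:samechar}; cross-characteristic and alternating candidates are pinned down by the $12$-dimensional representation theory.)

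For the second part, I would handle each of the four groups individually, in each case analysing the elements of order $9$ (equivalently, the classes of such elements) and their eigenvalue multiplicities on $V\otimes\overline{\F_2}$. An element $g$ of order $9$ is a $6$-stingray element precisely when $V = U \oplus F$ with $F = \ker(g-1)$ and $g$ irreducible of order $9$ on the $6$-dimensional $U$; irreducibility of order $9$ on a $6$-space over $\F_2$ forces the eigenvalues of $g$ on $U$ to be a full Galois orbit $\{\zeta, \zeta^2, \zeta^4, \zeta^8, \zeta^{16}, \zeta^{32}\}$ of a primitive $9$th root of unity $\zeta$ (since $o_2(9)=6$), i.e.\ all six primitive $9$th roots of unity each with multiplicity exactly $1$, the eigenvalue $1$ with multiplicity $6$, and no eigenvalue equal to a primitive cube root of unity. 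So the obstruction to being a stingray element is a purely numerical condition on the Brauer character value $\chi(g)$ and on the restriction of $\chi$ to the cyclic group $\langle g\rangle$: one needs $\chi(g) = 6 + (\zeta + \zeta^2 + \dots + \zeta^{32}) = 6 + (-1) = 5$ (using that the sum of all primitive $9$th roots of unity is $\mu(9)=0$, wait — the sum of primitive $9$th roots of unity is $0$, and the sum of primitive cube roots is $-1$; so if the cube-root eigenspaces are trivial the trace is $6 + 0 = 6$). More precisely I would compute, for each class of elements of order $9$, the multiplicity of the eigenvalue $1$ and the multiplicities of the two primitive cube roots of unity, using the ordinary/Brauer character tables available via \cite{AtlasRep} in {\sf GAP}, and check that in every case either $\dim\ker(g-1)\neq 6$ or some primitive-cube-root eigenspace is nonzero (so $g$ is not even conjugate into a subgroup $\GL_6(2)\times\GL_6(2)$ acting irreducibly of order $9$ on the first factor).

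Concretely: for $A_{14}$ and $A_{13}$ on the (reduction mod~$2$ of the) fully deleted permutation module of dimension $12$, an element of order $9$ must be a single $9$-cycle (the only way to get order $9$ in $A_{13}$ or $A_{14}$), whose fixed space on the natural permutation module has dimension $n-8$ and on the deleted module dimension $n-9 = 4$ or $5$, not $6$; so these are excluded by a direct cycle-type computation, paralleling the analysis behind Table~\ref{t:c9-permmod}. For $\PSL_3(3)$, whose order is $2^4\cdot 3^3\cdot 13$, elements of order $9$ are regular unipotent-type\footnote{In the natural characteristic-$3$ geometry; here we view the $12$-dimensional representation in characteristic $2$.} and I would read off from the $12$-dimensional $2$-modular character that the $1$-eigenspace has the wrong dimension. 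For $\PSL_2(25)$ (order $2^3\cdot 3\cdot 5^2\cdot 13$), there are no elements of order $9$ at all since $9\nmid |\PSL_2(25)|$ — this case is immediate. So the only genuine computations are for $A_{13}, A_{14}, \PSL_3(3)$, all of which reduce to a one-line eigenvalue-multiplicity check. I expect the main obstacle to be the first part — assembling the representation-theoretic classification to get exactly the four groups, since one must be careful that the list of $12$-dimensional characteristic-$2$ representations of quasisimple groups is complete and that Lie-type-in-characteristic-$2$ candidates are all eliminated; the second part is routine once the candidates are in hand.
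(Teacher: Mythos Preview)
Your approach to Part~1 differs from the paper's. Rather than trawling representation lists and order divisibility, the paper first reads off the \emph{maximal} $\cS$-subgroups of $\GL_{12}(2)$ directly from \cite[Tables~8.77--8.85]{BHRD}, obtaining exactly the four groups, and then shows that any non-maximal $\cC_9$-subgroup would have to sit inside some $M\in\cC_2\cup\cC_4$ (the classes $\cC_3,\cC_6,\cC_7$ being excluded a priori); a short case analysis of the relevant $\GL_a(2)\wr S_b$ and $\GL_a(2)\otimes\GL_b(2)$ shows $H^{(\infty)}$ cannot be irreducible inside any of them, and a {\sc Magma} check confirms the four groups have no further absolutely irreducible simple subgroups. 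Your route via Hiss--Malle could in principle be made to work, but as you yourself flag, it leaves the defining-characteristic Lie-type candidates to be handled separately, and it is not obvious how to do that cleanly without essentially reproducing the \cite{BHRD} tables.

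In Part~2 there is a genuine gap. You assert that an element of order $9$ in $A_{13}$ or $A_{14}$ ``must be a single $9$-cycle''. This is false: a $9$-cycle times a disjoint $3$-cycle is even, has order $9$, and lies in both $A_{13}$ and $A_{14}$. The paper explicitly lists both cycle types and then invokes \cite[Lemmas~4.3 and~4.4]{PSY} to bound the fixed-point dimension by $4$ in either case. Your direct fixed-space computation handles only one of the two classes; you need to cover the $9\cdot3$ shape as well. (Incidentally, your dimension count ``$n-9$'' for the $9$-cycle is not quite right either when $p\mid n$, since one must pass through $W/D$ rather than $Y/D$; the citation to \cite{PSY} sidesteps this.)

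A smaller correction: $\PSL_3(3)$ has \emph{no} elements of order $9$. Its Sylow $3$-subgroup is the group of upper unitriangular $3\times3$ matrices over $\F_3$, which has exponent $3$ (since $(I+N)^3=I$ when $N^3=0$ in characteristic $3$). So that case is vacuous, just like $\PSL_2(25)$, and no character computation is needed.
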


\begin{proof}
Recall our remarks preceding Proposition~\ref{p:122}. If $H\in\cS$, that is, if  $H$ is a maximal $\cC_9$-subgroup not contained in any subgroup in $\cup_{i\leq 8}\cC_i$, then by \cite[Tables 8.77--8.85]{BHRD},  $H^{(\infty)}\in\{ 
\PSL_2(25), A_{13}, A_{14}, \PSL_3(3)\}$. The example $A_{14}$ (which acts on the deleted permutation module) contains the other possibility $A_{13}$. 
Any additional $\cC_9$-subgroup $H$ must be contained in a maximal subgroup $M\in \cup_{i\leq 8}\cC_i$, and as discussed above, $M\in \cC_2\cup\cC_4$, and these subgroups $M$ are 
\[
\GL_6(2)\wr S_2, \GL_4(2)\wr S_3, \GL_3(2)\wr S_4, \GL_2(2)\wr S_6,  
\GL_6(2)\otimes\GL_2(2),\GL_4(2)\otimes\GL_3(2).
\]
Thus $H^{(\infty)}\le M$ is a simple irreducible subgroup.
If $H\le\GL_a(2)\wr S_b$ where $12=ab$ and $b=2,3,4$, then $S_b$ is solvable so
$H^{(\infty)}\le\GL_a(2)^b$ acts reducibly, a contradiction.
If $b=6$, i.e. $H\le \GL_2(2)\wr S_6$, then $H^{(\infty)}$ equals $A_5$ or $A_6$.
However, $A_5$ and $A_6$ act reducibly, see \cite[pp.\;2, 4]{Modat}. This eliminates the $\cC_2$
possibilities for $M$.
If $H\le\GL_a(2)\otimes\GL_b(2)$ where $b=2,3$, then as
$H^{(\infty)}$ is simple and $\GL_b(2)$ is soluble, $H^{(\infty)}$ lies in
$\GL_a(2)\otimes1$, so $H$ is reducible, a
contradiction. Thus $M\not\in\cC_4$, so
$H^{(\infty)}\in\{\PSL_2(25), A_{13}, A_{14}, \PSL_3(3)\}$. A search with
{\sc Magma}~\cite{Magma} shows that $\PSL_2(25), A_{13}, A_{14}, \PSL_3(3)$
have no absolutely irreducible simple subgroups. Hence
$H^{(\infty)}\in\{\PSL_2(25), A_{13}, A_{14}, \PSL_3(3)\}$ are the only
possibilities.

Suppose now that $H$ contains a $6$-stingray element $g$ of order $|g|=9$.
Then since $H\le N_{\GL_{12}(2)}(H^{(\infty)})$, and since 
$|N_{\GL_{12}(2)}(H^{(\infty)})/H^{(\infty)}|$ divides $|\Aut(H^{(\infty)})|$
which divides $4$, it follows that $g$ lies in $H^{(\infty)}$.
However, $H^{(\infty)}\ne \PSL_2(25)$ or
$\PSL_3(3)$ since in these cases $H^{(\infty)}$ contains no elements of order $9$.
Thus $H^{(\infty)}=A_{13}$ or $A_{14}$ acting on the deleted permutation module.
In these cases $H^{(\infty)}$ contains elements of order $9$; these
are either $9$-cycles, or the product of a $9$-cycle and a $3$-cycle.
By \cite[Lemmas 4.3 and 4.4]{PSY} it follows that the fixed point subspace
of $g$ in $\F_{2}^{12}$ has dimension at most $4$, and hence $g$ is not
a $6$-stingray element. This completes the proof.
\end{proof}

The following proof of Theorem~\ref{t:c9} relies on Theorem~\ref{t:stingray}.
  However, our upcoming proof of Theorem~\ref{t:stingray} does not use Theorem~\ref{t:c9}.

\subsection{Proof of Theorem~\ref{t:c9}}\label{s:tc9}

Let $Y=X_d(q)$, $g$, and $p_9(Y,g)$ be as in Hypothesis~\ref{hyp:app} and assume that $d>8$ and $p_9(Y,g)>0$. It follows from Proposition~\ref{p:122} that $(d,q)\ne (12,2)$. Hence $g$ is a $(d/2)$-ppd stingray element of ppd prime order $r$, and so we may apply Theorem~\ref{t:stingray}. The groups to be treated are therefore listed in Table~\ref{t:stelts}, and since $d>8$, the only possible  quasisimple group is $G=A_{n}$ and the only prime $r$ is $d/2+1$, where $n=d+\delta$, $\delta=1$ if $p\nmid n$, and otherwise $\delta=2$. We note that, since $r=d/2+1$ is odd, we must have $d\equiv 0\pmod{4}$.  The action of $G=A_{n}$ is on the deleted permutation module, which we discuss further in Section~\ref{ss:perm}, and we determine here the contribution these groups make to $p_9(Y,g)$. 
By \cite[pp.\,186--7]{KL} (see also Proposition~\ref{p:delperm}), the group $A_{d+\delta}$ is embedded in either an orthogonal group $\POm_d^\epsilon(p)$ for some $\epsilon=\pm$, or a symplectic group $\Sp_d(p)$ (if $p=2$ and $d\equiv 0\pmod{4})$. 
Since by assumption $G$ is a $\cC_9$-group, the representation of $A_{d+\delta}$ cannot be realised over a proper subfield of $\F_q$, and hence $q=p$. Also by assumption $G$ is not contained in a proper classical subgroup of $Y$, and so $X$ must be either $\Sp_d(2)$ with $p=2$, or $\SO^\epsilon_d(p)$. Then by \cite[Lemma 4.6]{PSY}, $p_9(Y,g)\leq q^{-d^2/4+ 2d+3}$.

\section{Comments on the proof strategy for Theorem~\ref{t:stingray}}\label{s:strategy}

We consider quasisimple subgroups $G<\GL_d(q)$ with $d$ even, $d\geq 4$, and $q=p^a$ for a prime $p$ and positive integer $a$. We assume that $|G|$ is divisible by  a primitive prime divisor $r$ of $q^{d/2}-1$, that is to say, $r$ is a $(d/2)$-ppd prime and $G$ contains a $(d/2)$-ppd element of order $r$. %
Now $r=r_0(d/2)+1$ for some integer $r_0$  (see \cite[Remark 1.1]{GPPS}), so in particular $r\geq d/2 +1\geq3$, which means that $r$ is an odd prime and if $r=3$ then $d=4$.   Also by the definition of a ppd element, $q$ has order $d/2$ modulo $r$, that is $o_r(q)=d/2$. 
The quotient $S\coloneq  G/\Z(G)$ is a nonabelian simple group, and since
$o_r(q)=d/2$, the prime $r$ does not divide $|\Z(G)|$, and hence $r$ divides $|S|$. 
To prove Theorem~\ref{t:stingray}(a) we use DiMuro's paper \cite{DiM} as the main resource for the cases where $S$ is not of Lie type in characteristic $p$, in Sections~\ref{s:alt}, \ref{s:spor}, and \ref{s:cross}. In the remaining `same characteristic' case we give some proof details in Section~\ref{sec:samechar} (and identify the relevant tables and sections of \cite{DiMThesis} where a proof can be found).     

Our main objective is to find all quasisimple subgroups $G<\GL_d(q)$ containing a $(d/2)$-ppd stingray element of prime order $r$. Note that each $(d/2)$-stingray element is semisimple, as the prime order $r$ is different from the characteristic. Also, these stingray elements act on the  natural module $V=\F_q^d$ as  `almost cyclic' matrices which have been studied extensively by the fourth author and colleagues, and are defined as follows.

\begin{definition}
{\rm
A matrix $g\in\GL_d(q)$ is \emph{almost cyclic} if, for some $k\in [0,d]$, some scalar multiple $zg$ ($z\in\F_q^\#$) is conjugate in $\GL_d(q)$ to a block diagonal matrix ${\rm diag}(I_{d-k}, g_1)$ where $g_1\in\GL_k(q)$ is a cyclic matrix (that is, the minimal and characteristic polynomials of $g_1$ are the same).  In particular, each $(d/2)$-stingray element is almost cyclic (with $k=d/2$ and $g_1$ irreducible), but the converse is not true. 
}
\end{definition}
 The papers \cite{DiMPZ} and \cite{DiMZ, Z08} contain classifications of various classes of absolutely irreducible quasisimple groups containing almost cyclic elements, and our proofs rely heavily on this work. 
We discuss in Remark~\ref{r:e-ppd}, the various kinds of  $(d/2)$-ppd elements and the links with almost cyclic elements.

\begin{remark}\label{r:e-ppd}
 Suppose that $d$ is even, $d\geq4$, and that $g\in\GL_{d}(q)$ is a $(d/2)$-ppd element of prime order, that is,  $r\coloneq |g|$ is a primitive prime divisor of $q^{d/2}-1$. Then $g$ satisfies precisely one of the following:
\begin{enumerate}
    \item[(1)] The element $g$ is a $(d/2)$-stingray element; that is it has a $(d/2)$-dimensional fixed point space and is irreducible on a unique $(d/2)$-dimensional complementary subspace.
\item[(2)] The element $g$ is semisimple with characteristic polynomial $f_1(t)f_2(t)$ where each $f_i$ is irreducible of degree $d/2$, and either 
\begin{enumerate}
    \item[(2.i)] $f_1\ne f_2$ which implies that $g$ is regular semisimple, and in fact $g$ is cyclic, or
    \item[(2.ii)] $f_1=f_2$, so $g$ is not regular semisimple, and is not cyclic. 
\end{enumerate} 
\end{enumerate}
Elements of types (1) and (2.i) are almost cyclic, while those of type (2.ii) are not. In particular, if $|g|=3$, then $q\equiv 2\pmod{3}$, $d=4$, and there is a unique irreducible polynomial of degree $2$, so either (1) or (2.ii) holds for $g$.\qed  
\end{remark}

To  determine whether a given element $g$ of a quasisimple subgroup $G\leq\GL_d(q)$, with $r\coloneq |g|$ a $(d/2)$-ppd prime, is a $(d/2)$-stingray element, it is often helpful to  inspect the Brauer characters in \cite{Modat} for those representations where $p$ divides $|G|$, or the character tables in \cite{Atlas} for those representations  where $p$ does not divide $|G|$. If $r=1+k(d/2)$ then a $(d/2)$-stingray element $g$ is conjugate in $\GL_d(\overline{\F_q})$ to a diagonal matrix ${\rm diag}(\eta_1, \eta_2,\dots, \eta_{d/2}, 1,\dots,1)$, where the $\eta_i$ are pairwise distinct primitive $r^{\rm th}$ roots of unity in $\overline{\F_q}$, an algebraic closure of $\F_q$. The Brauer character is essentially the trace of this matrix, namely $\chi(g)=d/2 + \sum_{i=1}^{d/2} \eta_i$, with the $r^{\rm th}$ roots of unity replaced by complex $r^{\rm th}$ roots of unity. 
We use the following fact:

 \begin{lemma}\label{l:rootsC}
Let $r$ be an odd prime, and let $\eta_1,\dots, \eta_{r-1}$ be the nontrivial $r^{\rm th}$ roots of unity over $\mathbb{C}$. If $\sum_{i=1}^{r-1} a_i\eta_i=c$, with $a_1,\dots, a_{r-1}, c\in  \mathbb{Q}$, then 
$a_1=\dots =a_{r-1}=-c$.
 \end{lemma}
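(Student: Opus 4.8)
The plan is to exploit the structure of the cyclotomic field $\mathbb{Q}(\eta)$, where $\eta=\eta_1$ is a fixed primitive $r^{\mathrm{th}}$ root of unity. The key fact is that the minimal polynomial of $\eta$ over $\mathbb{Q}$ is the $r^{\mathrm{th}}$ cyclotomic polynomial $\Phi_r(t)=1+t+t^2+\cdots+t^{r-1}$, which has degree $r-1$; equivalently, $\{1,\eta,\eta^2,\dots,\eta^{r-2}\}$ is a $\mathbb{Q}$-basis of $\mathbb{Q}(\eta)$ and the single linear relation among $1,\eta,\dots,\eta^{r-1}$ (up to scalar) is $1+\eta+\cdots+\eta^{r-1}=0$. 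First I would relabel so that $\eta_i=\eta^i$ for $i=1,\dots,r-1$, which is legitimate since the nontrivial $r^{\mathrm{th}}$ roots of unity are exactly the powers $\eta,\eta^2,\dots,\eta^{r-1}$ in some order, and permuting the indices only permutes the $a_i$.

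Next I would rewrite the hypothesis $\sum_{i=1}^{r-1}a_i\eta^i=c$ as an identity in $\mathbb{Q}(\eta)$, and use the relation $\eta^{r-1}=-(1+\eta+\cdots+\eta^{r-2})$ to eliminate $\eta^{r-1}$. This gives
\[
\sum_{i=1}^{r-2}a_i\eta^i - a_{r-1}\sum_{i=0}^{r-2}\eta^i = c,
\]
that is,
\[
-(a_{r-1}+c) \cdot 1 + \sum_{i=1}^{r-2}(a_i-a_{r-1})\eta^i = 0.
\]
Since $1,\eta,\dots,\eta^{r-2}$ are linearly independent over $\mathbb{Q}$, every coefficient vanishes: $a_{r-1}=-c$ and $a_i=a_{r-1}=-c$ for $i=1,\dots,r-2$. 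Hence $a_1=\cdots=a_{r-1}=-c$, as claimed. (One could equally phrase this via the trace map $\mathrm{Tr}_{\mathbb{Q}(\eta)/\mathbb{Q}}$: applying a Galois automorphism $\sigma_j:\eta\mapsto\eta^j$ for each $j$ coprime to $r$ and summing shows all the $a_i$ coincide, after which a single evaluation pins down the common value; but the basis argument is cleaner.)

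There is essentially no obstacle here — the only point requiring a word of care is the relabelling step, i.e.\ observing that the statement is symmetric in the $\eta_i$ (equivalently in the pairs $(a_i,\eta_i)$), so that we lose no generality in taking $\eta_i=\eta^i$. Everything else is the standard fact that $[\mathbb{Q}(\eta):\mathbb{Q}]=r-1$ with the displayed basis, which is immediate from the irreducibility of $\Phi_r$ over $\mathbb{Q}$. The hypothesis that $r$ is an odd prime is used (as opposed to a general prime) only through the paper's conventions; primality of $r$ is what is genuinely needed, to ensure $\Phi_r$ has the simple form above and degree exactly $r-1$.
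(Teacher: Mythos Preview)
Your proof is correct and follows essentially the same approach as the paper: both arguments rest on the fact that $[\mathbb{Q}(\eta):\mathbb{Q}]=r-1$ and the resulting $\mathbb{Q}$-linear independence of $r-1$ of the $r^{\mathrm{th}}$ roots of unity. The only cosmetic difference is that the paper works directly with the set $\{\eta,\eta^2,\dots,\eta^{r-1}\}$ (which is itself a basis), adding $c\sum_{i=1}^{r-1}\eta^i=-c$ to the hypothesis to obtain the homogeneous relation $\sum_{i=1}^{r-1}(a_i+c)\eta^i=0$, whereas you first eliminate $\eta^{r-1}$ to reduce to the standard basis $\{1,\eta,\dots,\eta^{r-2}\}$.
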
  

 \begin{proof}
Since $r$ is an odd prime, the subfield $\mathbb{Q}(\eta)$ generated by a non-identity $r$th root of unity $\eta$ contains $\mathbb{Q}$ with index $r-1$, the nontrivial $r$th roots of unity are precisely $\eta,\dots,\eta^{r-1}$, and they are linearly independent over $\mathbb{Q}$. 
Moreover, since $\eta^r=1$ and $\eta\ne 1$, we have $\sum_{i=0}^{r-1} \eta^i= (\eta^r-1)/(\eta-1)=0$, or equivalently, $\sum_{i=1}^{r-1} \eta^i= -1$.
Suppose that $\sum_{i=1}^{r-1} a_i\eta^i=c$, with $a_0,\dots, a_{r-1}, c\in\mathbb{Q}$. Then adding this equation to the equation $c\sum_{i=1}^{r-1} \eta^i= -c$, we have 
$\sum_{i=1}^{r-1} (a_i+c)\eta^i=0$, and hence $a_1=\dots =a_{r-1}=-c$.   
 \end{proof}

We illustrate how this fact may be used in Sections~\ref{s:alt} and~\ref{s:cross}.

\begin{lemma}\label{l:don2}
 Let $g\in G$, where $G$ is a quasisimple subgroup of  $G\leq\GL_d(q)$, with $r\coloneq |g|$ a $(d/2)$-ppd prime. Let $\chi$ be the Brauer character of $G$ {\rm(}see~{\rm\cite[\S4]{Modat}}{\rm)}.
 \begin{enumerate}
     \item[(a)] If $r=1+d/2$, then $g$  is a $(d/2)$-stingray element if and only if $\chi(g)=d/2-1$.
    \item[(b)] If $r=1+d$, then $g$  is of type $(2.i)$ if $\chi(g)=-1$, of type $(2.ii)$ if $\chi(g)=2N$, and a $(d/2)$-stingray element if  $\chi(g)=d/2+N$, where $N$ is a sum of $d/2$ pairwise distinct nontrivial $r$th roots of unity.
 \end{enumerate}
    
\end{lemma}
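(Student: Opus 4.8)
The plan is to exploit the fact, recalled just before the lemma, that a $(d/2)$-ppd element of prime order $r$ is conjugate over $\overline{\F_q}$ to a diagonal matrix whose eigenvalues are $r^{\rm th}$ roots of unity, with the trivial eigenvalue $1$ occurring with some multiplicity $m$ and nontrivial $r^{\rm th}$ roots of unity making up the rest; the Brauer character value $\chi(g)$ is then the corresponding sum of complex $r^{\rm th}$ roots of unity. So the first step is to translate each of the three cases in Remark~\ref{r:e-ppd} into a statement about $\chi(g)$, and then show the converse using Lemma~\ref{l:rootsC}.

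For part (a): here $r = 1 + d/2$, so the only nontrivial $r^{\rm th}$ roots of unity available are all $r-1 = d/2$ of them, each with multiplicity $0$ or $1$ since $g$ is semisimple of order $r$ and the total number of nontrivial eigenvalues is at most $d$. A $(d/2)$-stingray element has exactly the $d/2$ nontrivial roots each once, plus $1$ with multiplicity $d/2$, giving $\chi(g) = d/2 + \sum_{i=1}^{r-1}\eta_i = d/2 + (-1) = d/2 - 1$ by the identity $\sum_{i=1}^{r-1}\eta^i = -1$ used in Lemma~\ref{l:rootsC}. Conversely, suppose $\chi(g) = d/2 - 1$. Write the eigenvalue multiset as $1$ with multiplicity $d - \sum a_i$ and the $i$-th nontrivial root with multiplicity $a_i \in \{0,1,\dots\}$; then $\sum a_i \eta_i = \chi(g) - (d - \sum a_i)$ is rational, so Lemma~\ref{l:rootsC} forces all $a_i$ equal to some common value $a$. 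Since each $a_i \le$ (bound coming from $d$) and $r - 1 = d/2$, one checks $a = 1$ is the only possibility consistent with $\chi(g) = d/2 - 1$ and $\dim V = d$; this gives exactly $d/2$ nontrivial eigenvalues, all distinct, hence $V(1-g)$ is the $(d/2)$-dimensional sum of nontrivial eigenspaces and the fixed space has dimension $d/2$, i.e. $g$ is a $(d/2)$-stingray element. (One should double-check that $g$ having order exactly $r$, rather than a multiple, is already built into the hypothesis $r \coloneq |g|$, so every nontrivial eigenvalue really is a primitive $r^{\rm th}$ root of unity; this is automatic here.)

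For part (b): now $r = 1 + d$, so there are $r - 1 = d$ nontrivial $r^{\rm th}$ roots of unity, and a semisimple $g$ of order $r$ acting on the $d$-dimensional $V$ has at most $d$ nontrivial eigenvalues. In case (2.i), the characteristic polynomial is $f_1 f_2$ with $f_1 \ne f_2$ irreducible of degree $d/2$, so the $d$ eigenvalues are $d$ distinct nontrivial $r^{\rm th}$ roots of unity — i.e.\ all of them — giving $\chi(g) = \sum_{i=1}^{r-1}\eta_i = -1$. In case (2.ii), $f_1 = f_2$, so the eigenvalues are a single Galois orbit of size $d/2$ each with multiplicity $2$, giving $\chi(g) = 2N$ where $N = \sum_{\eta \in \text{orbit}} \eta$ is a sum of $d/2$ distinct nontrivial $r^{\rm th}$ roots of unity. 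In the stingray case, the nontrivial eigenvalues form one Galois orbit of size $d/2$ each with multiplicity $1$, and $1$ occurs with multiplicity $d/2$, giving $\chi(g) = d/2 + N$ with $N$ as above. The implications in the stated direction are thus immediate; the paper only asserts these one-directional ``if'' statements in (b) (unlike the ``if and only if'' in (a)), presumably because different Galois orbits $N$ can coincide in value or because the converse requires knowing $g$ is a ppd element of the stated ppd type, so I would not attempt a converse there.

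The main obstacle is the bookkeeping in part (a)'s converse: pinning down that the common multiplicity $a$ produced by Lemma~\ref{l:rootsC} must equal $1$, which uses both the dimension constraint $\sum_i a_i \cdot 1 + (\text{mult. of }1) = d$ with $r - 1 = d/2$ and the numerical value $\chi(g) = d/2 - 1$ together with $\sum \eta_i = -1$; one must rule out $a = 0$ (forces $\chi(g) = d$, absurd) and $a \ge 2$ (forces too many eigenvalues, exceeding $d$, since $2 \cdot (d/2) = d$ would leave no room for the fixed space while a stingray needs a $d/2$-dimensional fixed space — actually $a=2$ gives $d$ nontrivial eigenvalues and $\chi(g) = -2 \ne d/2 - 1$ for $d \ge 4$). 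Everything else is a direct computation with roots of unity and an invocation of Lemma~\ref{l:rootsC}.
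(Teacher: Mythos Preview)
Your proposal is correct, and for part (b) it matches the paper's proof essentially verbatim.

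For the converse in part (a), however, you take a genuinely different route from the paper. The paper's argument is shorter: it invokes Remark~\ref{r:e-ppd} to note that when $r=d/2+1$ there is a \emph{unique} monic irreducible polynomial of degree $d/2$ over $\F_q$ (namely $(t^r-1)/(t-1)$), so type~(2.i) cannot occur; hence a non-stingray $g$ must be of type~(2.ii), giving $\chi(g)=-2\neq d/2-1$. Your argument instead writes the eigenvalue multiplicities as unknowns $a_i$, applies Lemma~\ref{l:rootsC} to force $a_1=\dots=a_{d/2}=a$ for some common value $a$, and then solves $d-(d/2)a-a=d/2-1$ to get $a=1$. Both approaches work; the paper's is a two-line case split, while yours is a direct linear-algebra computation that avoids Remark~\ref{r:e-ppd} entirely. (A minor slip in your write-up: the phrase ``each with multiplicity $0$ or $1$'' at the start is premature and not yet justified---you should delete it, since your actual argument correctly treats the $a_i$ as arbitrary nonnegative integers and then \emph{derives} $a_i=1$.) One small point you gloss over: once you have the eigenvalue pattern, you should note that the $d/2$ nontrivial $r$th roots form a single Galois orbit over $\F_q$ (since $o_r(q)=d/2=r-1$), so $g$ really does act irreducibly on the complement of its fixed space; the paper makes this explicit by observing that $(t^r-1)/(t-1)$ is irreducible.
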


\begin{proof}
As discussed before Lemma~\ref{l:rootsC}, a $(d/2)$-stingray element $g$ is conjugate in $\GL_d(\overline{\F_q})$ to a diagonal matrix ${\rm diag}(\eta_1, \eta_2,\dots, \eta_{d/2}, 1,\dots,1)$, where the $\eta_i$ are pairwise distinct primitive $r^{\rm th}$ roots of unity in $\overline{\F_q}$, so $\chi(g)=d/2+\sum_{i=1}^{d/2}\eta_i$. 

    (a) If $r=d/2+1$, then the $\eta_i$ in the previous paragraph constitute all of the nontrivial $r^{\rm th}$ roots, and so $\sum_{i=1}^{d/2}\eta_i=-1$, and $f(t)=\prod_{i=1}^{d/2}(t-\eta_i)=(t^r-1)/(t-1)$ is the unique monic irreducible polynomial of degree $d/2$. Thus if $g$ is a stingray element then $\chi(g)=d/2-1$. On the other hand, if $g$ is not a stingray element, then by Remark~\ref{r:e-ppd}, $g$ is of type (2.ii) and so  $\chi(g)=2\sum_{i=1}^{d/2}\eta_i=-2$.

    (b) If $r=d+1$, then the $\eta_i$ above constitute half of the nontrivial $r^{\rm th}$ roots, and $f(t)=\prod_{i=1}^{d/2}(t-\eta_i)$ is one of the precisely two  monic irreducible polynomials over $\F_q$ of degree $d/2$. Let $N\coloneq \sum_{i=1}^{d/2}\eta_i$. If $g$  is of type $(2.i)$ then the characteristic polynomial of $g$ equals $(t^r-1)/(t-1)$ and $\chi(g)=-1$, the sum of all the nontrivial $r^{\rm th}$ roots of unity. If $g$ is of type $(2.ii)$ with characteristic polynomial $f(t)^2$ then $\chi(g)=2N$, and if $g$ is a $(d/2)$-stingray element then $\chi(g)=d/2+N$.
\end{proof}

Finally, we make some comments about the definition of classical groups in~\eqref{d:class}  in the cases of unitary groups (Remark~\ref{r:U}) and $8$-dimensional orthogonal groups (Remark~\ref{r:O8spin}). In the latter case additional conjugacy classes of orthogonal subgroups in $\GL_8(V)$ are also referred to as lying in the $\cC_8$-class of geometric subgroups (see Section~\ref{s:generallemma}). In particular we discuss the existence of $4$-stingray elements in the $8$-dimensional representations of the spin groups $H=\Spin_8^{\epsilon}(q)$, where $\epsilon=\pm$. 

\begin{remark}\label{r:U}
    Here we note that, if the dimension of an irreducible representation of $\SU_m(q)$ over $\F_{q^2}$ equals $m$, then 
the representation is the natural one up to quasi-equivalence, that is, up to twisting by a field automorphism. To see this, it follows from \cite[Theorem 43]{St} that every absolutely irreducible representation of $\SU_m(q)$ extends to a representation of $\SL_m(F)$, where $F$ is an algebraically closed field containing $\F_{q}$. The so-called $p$-restricted representations of degree $m$ are the natural one and its dual (see, for example, \cite[Table 2]{Lu}), and  all others of degree $m$ are obtained from these by applying the Frobenius map arising from the map $x\to x^{p^k}$
for $x\in F$. (The latter follows from the Steinberg tensor product theorem, which implies that the number of terms in the product equals 1 as there is no irreducible representation of $\SL_m(F)$ of dimension strictly between $1$ and $m$, \cite[Table 2]{Lu}.) Further, the group in the image of $\SU_m(q)$ under a representation is not affected by a twisting automorphism, and it follows that each image of $\SU_m(q)$ in $\GL_m(q^2)$ is conjugate to the stabiliser of a Hermitian form on $\F_{q^2}^m$ (see \cite[Lemma 1.8.6]{BHRD}), and hence is a classical subgroup of $\GL_m(q^2)$ as in \eqref{d:class}.\qed
\end{remark}

\begin{remark}\label{r:O8spin}    
  For $\epsilon=\pm$, the spin group $H=\Spin^{\epsilon}_n(q)$ is a certain
  central extension of $\Omega^{\epsilon}_n(q)$. If $q$ is odd and $n=2m$ is even, then
  $\Spin^{\epsilon}_n(q)\cong\nonsplit{2}{\Omega^{\epsilon}_n(q)}$ has the following structure~\cite[\S\,3.9.3]{W}
  \[
  \Spin^{\epsilon}_n(q)\cong
  \begin{cases}
    \nonsplit{2}{\POm^{\epsilon}_n(q)}&\textup{if $q^m\equiv -\epsilon\pmod{4}$,
      and $\Omega^{\epsilon}_n(q)\cong\POm^{\epsilon}_n(q)$,}\\
    \nonsplit{4}{\POm^{\epsilon}_n(q)}&\textup{if $q^m\equiv\epsilon\pmod{4}$, $m$ odd,
      and $\Omega^{\epsilon}_n(q)\cong\nonsplit{2}{\POm^{\epsilon}_n(q)}$,}\\
    \nonsplit{2^2}{\POm^{\epsilon}_n(q)}&\textup{if $q^m\equiv\epsilon\pmod{4}$, $m$ even,
      and $\Omega^{\epsilon}_n(q)\cong\nonsplit{2}{\POm^{\epsilon}_n(q)}$.}\\
  \end{cases}
  \]
  These groups have irreducible representations in characteristic $p$ of dimension $2^{m-1}$, the so-called spin representations, and in particular $2^{m-1}=2m=n$ when $n=8$ yielding additional $8$-dimensional representations of orthogonal groups in this special case.
  
  Suppose now that $n=2m=8$ and $q$ is odd, and let $r$ be a primitive prime divisor of $q^4-1$ (note that such a prime $r$ exists). Then  $q^m = q^4 \equiv 
 1\pmod{4}$, so $H\coloneq\Spin^{\epsilon}_8(q)$ is either $\nonsplit{2}{\POm^{-}_8(q)}$ with $\epsilon = -$ and $\Omega^{\epsilon}_8(q)\cong\POm^{\epsilon}_8(q)$; or $\nonsplit{2^2}{\POm^{+}_8(q)}$ with $\epsilon = +$ and  $\Omega^{\epsilon}_8(q)\cong\nonsplit{2}{\POm^{\epsilon}_8(q)}$.
 Let $V=\mathbb{F}_q^{8}$, and consider first the natural
  representation $\nu:H\rightarrow \GL(V)$ with $\nu(H)=\Omega_8^{\epsilon}(q)$ acting naturally on $V$, that is, $\nu(H)$ leaves invariant a nondegenerate quadratic form of type $\epsilon$ on $V$. For each $\epsilon=\pm$, $\nu$ has kernel of order $2$. Moreover, $\nu(H)$
  contains a subgroup $\Omega_4^-(q)\times \Omega_4^{-\epsilon}(q)$
  preserving an orthogonal decomposition $V=U\oplus U^\perp$ where
  $U$ is a nondegenerate $4$-space of minus type, and this subgroup
  contains a $4$-stingray element of order $r$ acting irreducibly on
  $U$ and fixing $U^\perp$ pointwise.

  We consider now the other representations of $H=\Spin^{\epsilon}_8(q)$. If $\epsilon=-$, then this representation $\phi:H\to \GL_8(q^2)$ has $\phi(H)\cong H= \nonsplit{2}{\POm^{-}_8(q)}$ and is not realisable over $\mathbb{F}_q$, as noted in the proof of \cite[Proposition~5.4.9(iii)]{KL}. Thus we only have the natural  conjugacy classes of orthogonal subgroups of $\GL_8(q)$ of minus type, that is to say, the groups preserving a nondegenerate quadratic form of minus type on the natural module $\F_q^8$ for $\GL_8(q)$. 
  
  The situation for $H=\Spin_{8}^{\epsilon}(q)$ with $q$ odd and $\epsilon=+$ is more interesting. Here the centre  $\Z(H)$ is an elementary abelian group of order four, and there are three inequivalent irreducible  representations  of $H=\Spin_{8}^{+}(q)$ of degree $8$. For each of these representations $\phi_i:H\to\GL_8(q)$ (where $i=1,2,3$), the image $\phi_i(\Z(H))$ has order two, and the three  representations have 
  pairwise distinct central kernels of order $2$. The group $H$ has an outer
  automorphism of order $3$ (a triality automorphism), permuting cyclically the three central involutions of $\Z(H)$, and the three representations $\phi_i$ of $H$ are conjugate (or twisted) to each other by this automorphism. 
  In particular, the three images $\phi_i(H)$ are isomorphic as abstract groups; one of them is regarded as the natural orthogonal group $\Omega_8^+(q)$ and the other representations are quasi-equivalent to it (see~\cite[\S\,3.9.3]{W}). Moreover, since these representations  are all conjugate under triality to the natural representation we
  do not list them in Table~\ref{t:c9-samechar}. We note that this is the same approach as that used in \cite{BHRD, GPPS} where
  each $\Omega_8^+(q)$ is regarded as a `geometric subgroup' of
  $\GL_8(q)$, as discussed in Section~\ref{s:generallemma},  even though there may exist several
  conjugacy classes of subgroups $\Omega_8^+(q)$ in $\SL_8(q)$ not all conjugate in $\GammaL_8(q)$ (see \cite[Table 8.44]{BHRD}). For $q$ even, $H=\Spin_{8}^{\epsilon}(q)\cong \Omega_8^+(q)$, and again we have three representations of $H$ of degree $8$; they are quasi-equivalent under triality and have images $\Omega_8^+(q)$, \cite[Proposition 5.4.9]{KL}.

Finally, we make some comments about stingray elements in these subgroups. As already discussed the natural subgroup $\Omega_8^+(q)$ of $\GL_8(q)$ contains $4$-ppd stingray elements of order $r$ for each $4$-ppd prime $r$ dividing $q^4-1$. Observe that, for an element $h\in H$ of $4$-ppd prime order $r$,  if $\phi$ is some representation of $H$ and
  $\phi(h)$ is a $4$-ppd stingray element of order $r$, then also
  $\phi^*(h)$ and $\phi'(h)$ are $4$-ppd stingray elements of order $r$, where $\phi^*$ is the dual of $\phi$, and $\phi'$ is a
  Galois conjugate of $\phi$.  However, distinct `spin' representations $\phi_i$ and $\phi_j$ are `quasi-equivalent' under triality, and the Jordan forms of $\phi_i(h)$ and $\phi_j(h)$ may be different; in particular one may be a $4$-ppd stingray element but not the other.  We give a small example for $H=\Spin_{8}^{\epsilon}(2)\cong \Omega_8^+(2)$. There are three $\Omega_8^+(2)$-classes of elements of order $5$, called $5A, 5B, 5C$, and we note that $r=5$ is a $4$-ppd prime divisor of $q^4-1=15$. These three classes are fused under triality. Also $\Omega_8^+(2)$ has three irreducible representations of degree $8$, and for each of the classes $5A, 5B, 5C$, the Brauer character values of the three representations are $3,-2,-2$ (in a different order for the different classes), see \cite[p. 232]{Modat}. It follows from our discussion after Remark~\ref{r:e-ppd} that the image $\phi(h)$ of an element $h\in H$ of order $5$ under an irreducible representation $\phi$ of degree $8$ is a $4$-stingray element if and only if its Brauer character value is $8/2-1=3$, and otherwise it has character value $-2$. Thus, for each element $h\in H$ of order $5$, exactly one of the three images of $h$ under these three representations is a $4$-ppd stingray element; and also in each of the three irreducible degree $8$ representations of $H$ there is exactly one class of  $4$-ppd stingray elements of order $5$.\qed  
\end{remark}

\section{Quasisimple groups involving alternating groups }\label{s:alt}

In this section we consider the case where $G$ is quasisimple containing a $(d/2)$-ppd element, and $S\coloneq G/\Z(G)$ is an alternating group.  We treat in Subsection~\ref{ss:perm} the case where the natural module $V=\F_q^d$ for $\GL_d(q)$ is  
the fully deleted permutation module for $S_n$ over $\F_q$, and consider all other cases in Subsection~\ref{ss:alt}.

\subsection{Alternating groups and fully deleted permutation modules}\label{ss:perm}

Here $S=A_n$ and  $V$ is  the fully deleted permutation module for $S_n$ over $\F_q$, which we define as follows. The permutation module for $S_n$ over $\F_q$ is the space  $Y=\F_q^n=\langle y_1,\dots, y_n\rangle$ such that $S_n$  permutes the basis vectors $y_1,\dots, y_n$ naturally. There are two invariant subspaces:
\[
W=\left\{ \sum_{i=1}^n x_iy_i \mathrel{\Big|} \sum_{i=1}^n x_1=0\right\}\quad \mbox{and}\quad D=\langle e\rangle, \ \mbox{where}\ e=\sum_{i=1}^n y_i.
\]
If $w_i=y_i-y_{i-1}$ for $1\le i<n$, then $w_1,w_2,\dots,w_{n-1}$ is a basis for $W$, and it follows from $w_1+2w_2+\cdots+(n-1)w_{n-1}=y_1+\cdots+y_{n-1}-(n-1)y_n$ that $D\le W$ if and only if $p\mid n$.
The \emph{fully deleted permutation  module} $V$ is the quotient space $V=W/(W\cap D)$, where $W\cap D=D$ if $p\mid n$ and $W\cap D=0$ if $p\nmid n$. Let $\delta=1$ if $p\nmid n$ and $\delta=2$ if $p\mid n$.

\begin{enumerate}
  \item[(PM1)] The following are equivalent: $p\nmid n$,\quad $\delta=1$,\quad
  $Y=W\oplus D$,\quad
  $V\cong W\cong Y/D$;
  \item[(PM2)] The following are equivalent: $p\mid n$,\quad $\delta=2$,\quad
  $D\le W$,\quad $V=W/D$.
\end{enumerate}
The $S_n$-module $V$ is absolutely irreducible, and remains irreducible when restricted to $A_n$. It can be written over the prime field $\F_p$ of $\F_q$. In the sequel, we view $V$ as a $G$-module where $A_n\lhdeq G\le N_{\GL_d(q)}(A_n)$.
We shall assume that $n\ge5$, so that $A_n$ is simple. If $n\ne 6$, then $\Aut(A_n)\cong S_n$, and so $N_{\GL_d(q)}(A_n)=S_n\times Z_{q-1}$. However, $\Aut(A_6)\cong S_6.2$ and $N_{\GL_d(q)}(A_6)=S_6\times Z_{q-1}$ if $p\ne3$, and if $p=3$, then $N_{\GL_4(q)}(A_6)=(A_6\times2):(2\times2)$ and $N_{\GL_4(q)}(A_6)/A_6\cong D_8$. In all cases $A_n \leq \GL_d(p)$, see \cite[p.\,187]{KL}.

First we show the various ways in which this action may involve $(d/2)$-ppd elements and $(d/2)$-ppd stingray elements. To explain the third column of Table~\ref{t:c9-permmod}, a permutation $x\in S_n$ with $a_i$ cycles of length $i$, where $n=\sum_ia_ii$, has \emph{cyclic type} $1^{a_1}\dots, n^{a_n}$; and we usually omit any factors with $a_i=0$.

\begin{proposition}\label{p:delperm}
Let $V=W/(W\cap D)$ be the deleted permutation module over $\F_q$ for $A_n$,  where $n\ge5$ and $d=\dim V=n-\delta$ as above. Then $H\coloneq N_{\GL_d(q)}(A_n)$ is $S_n\times Z_{q-1}$, unless $n=6, p=3$, when $H=(A_6\times 2): 2^2$; in all cases $A_n\leq \GL_d(p)$. Let $g\in H$ have order a ppd prime divisor $r$ of $q^{d/2}-1$. Then $g\in A_n$ and one of the lines of Table~$\ref{t:c9-permmod}$ holds. Moreover,  $g$ is a $(d/2)$-stingray element if and only if the entry in the last column of Table~$\ref{t:c9-permmod}$ is $\checkmark$. Thus if the hypotheses of Theorem~$\ref{t:stingray}$ hold, then $q=p, G= A_n$, and one of the lines of Table~$\ref{t:c9-permmod}$ holds.
\end{proposition}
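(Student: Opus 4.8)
The plan is to analyse the action of an element $g$ of prime order $r$ on the deleted permutation module $V$ in terms of its cycle type, using the fact (recorded just before the statement) that $r = r_0(d/2)+1 \geq d/2+1$, so $r$ is one of the two largest possible ppd primes, and the constraint $o_r(q) = d/2$. First I would establish the structural claims about $H = N_{\GL_d(q)}(A_n)$: for $n \neq 6$ this is $S_n \times Z_{q-1}$ since $\Aut(A_n) \cong S_n$ and the module is absolutely irreducible over $\F_p$ (hence the scalars contribute $Z_{q-1}$ and nothing else normalises $A_n$), while for $n = 6$, $p = 3$ one reads off the exceptional structure $(A_6 \times 2){:}2^2$ from \cite[p.\,187]{KL}; that $A_n \leq \GL_d(p)$ is the statement that the module is realisable over the prime field. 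Then, since $g$ has odd prime order $r \geq 3$ and the quotient $H/(A_n \cdot Z_{q-1})$ is a $2$-group (or $D_8$ in the exceptional case), and $r \nmid |Z_{q-1}| = q-1$ because $o_r(q) = d/2 \geq 2$, it follows that $g \in A_n$.

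Next I would translate ``$g$ acts with a given eigenvalue multiplicity pattern on $V$'' into a statement about the cycle type of $g \in A_n$. A permutation $g$ with $a_i$ cycles of length $i$ acts on the full permutation module $Y = \F_q^n$ with the eigenvalue $1$ occurring with multiplicity $\sum_i a_i$ (one trivial eigenvalue per cycle) and, for each nontrivial $r$th root of unity $\zeta$, with multiplicity $\sum_{r \mid i} a_i$ — here is where we use that $g$ has prime order $r$, so every cycle has length $1$ or $r$. For $g$ to be a ppd element of order $r$, we need some $U_i$ of dimension exactly $d/2$ on which $g$ acts irreducibly, which forces the multiplicity of each primitive $r$th root of unity on $V$ to be a multiple of $\dim(U_i)/(\text{degree of the relevant irreducible factor})$; combined with $o_r(q) = d/2$ and dimension counting (distinguishing the two cases $\delta = 1$ and $\delta = 2$ via (PM1)/(PM2), and being careful about whether the eigenvalue $1$ loses a dimension in passing from $W$ to $W/D$ when $p \mid n$) this pins down the possible cycle types: an $r$-cycle with $r = d/2+1$ together with enough fixed points (Lines 3, 4), a single $n$-cycle or an $(n-1)$-cycle times a fixed point with $r = d+1 = n-\delta+1$ (Lines 1, 2), or the product of two $r$-cycles with $r = d/2+1$ when $p = 2$ (Line 5). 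The parity condition $g \in A_n$ and the ppd conditions together rule out everything else and fix the value of $q = p$ stated in the table.

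Finally, for each surviving cycle type I would decide whether $g$ is a $(d/2)$-stingray element, i.e.\ whether its fixed point subspace on $V$ has dimension exactly $d/2$ with $g$ irreducible on a complement — equivalently, using Remark~\ref{r:e-ppd} and the discussion before Lemma~\ref{l:don2}, whether on $V$ the eigenvalue $1$ has multiplicity $d/2$ (rather than $d$ or some other value). Concretely: for an $r$-cycle with $r = d/2+1$ and $n - r$ fixed points, on $Y$ the eigenvalue $1$ has multiplicity $1 + (n-r)$ and each nontrivial $r$th root has multiplicity $1$; passing to $V$ (subtracting $1$ from the $1$-eigenspace when $\delta = 1$, and subtracting $1$ again for $D$ when $\delta = 2$ — one checks $D$ lies in the $1$-eigenspace) yields $1$-multiplicity $n - r - \delta + 1 = d - r + 1 = d/2$, so these are stingray elements ($\checkmark$ in Lines 3, 4); whereas for a single $n$-cycle or $(n-1)$-cycle the fixed space on $V$ is $0$- or too-small-dimensional so $g$ is of type (2.i) or similar and not a stingray element ($\times$ in Lines 1, 2), and for the product of two $r$-cycles ($p = 2$, Line 5) the primitive $r$th roots each occur with multiplicity $2$, so $g$ is of type (2.ii) and again not a stingray element. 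The main obstacle I anticipate is the bookkeeping in Line 2 and the $\delta = 2$ rows: one must track carefully how the passage $W \rightsquigarrow W/D$ affects the $1$-eigenspace (verifying that $e \in D$ is genuinely fixed by $g$ and lies in $W$, which holds precisely when $p \mid n$), and confirm that no additional cycle types sneak through once the parity constraint and the precise value of $r$ relative to $d$ are imposed; the rest is eigenvalue-multiplicity counting on permutation modules, which is routine.
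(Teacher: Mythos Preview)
Your proposal is correct and follows essentially the same route as the paper's proof: both arguments use the bound $d/2+1\le r\le n\le d+2$ to force $r\in\{d/2+1,\,d+1\}$, enumerate the possible cycle types of $g\in A_n$ accordingly, and then decide the stingray question by computing the $1$-eigenspace on $V$ (you phrase this via eigenvalue multiplicities, the paper via characteristic polynomials on $Y$, $W$, and $V$, which is equivalent since $g$ is semisimple). One small point to tighten: the column headed ``$q=p$'' in Table~\ref{t:c9-permmod} records conditions on $p$ (such as $p\nmid n$), and the conclusion $q=p$ itself comes only at the end from the hypothesis of Theorem~\ref{t:stingray} that $G$ is not realisable over a proper subfield, not from the ppd conditions.
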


\begin{proof}
As mentioned above the representation is realisable over the prime
subfield so $A_n\leq \GL_d(p)$, and as $A_n$ is absolutely
irreducible on $V$ the group $H=N_{\GL_d(q)}(A_n)$ is as in the
statement.  Suppose that $r$ is a ppd prime divisor of $q^{d/2}-1$
and that $r=|g|$ for some $g\in X$. Then $2<r\leq n$ and $r$ does
not divide $q-1$, and hence $g\in A_n$. Also $r=k(d/2)+1$ for some
$k\geq1$. Since $d=n-\delta$, we have $1+ d/2 \leq r= 1+k(d/2)\leq
n=d+\delta \leq d+2$. Thus $k\leq 2$. Suppose first that $k=2$ so
$r=d+1$. Then either $n=d+1$ (so $p$ does not divide $n$) and $g$
is an $n$-cycle, or $n=d+2$ (so $p$ divides $n$) and $g$ is an
$(n-1)$-cycle, as in lines 1,2 of
Table~\ref{t:c9-permmod}. Now suppose that $k=1$ so $r=d/2+1 =
(n+2-\delta)/2 \geq n/2$ and $r= (n+2-\delta)/2 \leq n-2$ (since
$n\geq5$). Then either $g$ is an $r$-cycle with $n-r$ fixed
points, or $\delta=2$ (so $p$ divides $n$) and $r=n/2$ and $g$ is
a product of two $r$-cycles. In the latter case, since $r$ is
coprime to $p$, and as $n=2r$ is divisible by $p$, we must have
$p=2$. Thus in lines $3, 4, 5$ of Table~\ref{t:c9-permmod}
the values of $r,n,p$ and the cycle type of $g$ are as shown.

We now decide which of these elements is a stingray
element. Consider first lines~$1, 2$ with $r=d+1$. If $g$ is
an $n$-cycle and $p\nmid n$, then $g$ has characteristic
polynomial $t^n-1$ on $Y$ and $(t^n-1)/(t-1)$ on $V\cong
Y/D$. Similarly, if $g$ is an $(n-1)$-cycle and $p\mid n$, then
$g$ has characteristic polynomial $(t^{n-1}-1)(t-1)$ on $Y$,
$t^{n-1}-1$ on $W\cong Y/D$, and $(t^{n-1}-1)/(t-1)$ on
$V=W/D$. In both cases the characteristic polynomial of $g$ on $V$
evaluated at 1 is non-zero. Hence $g$ is not a
$(d/2)$-stingray element. Consider next lines $3,4$ of
Table~\ref{t:c9-permmod} where
$r=d/2+1$ and $g$ is an $r$-cycle. Here $n/2\leq r\le
n-2$. Since $r$ is a ppd prime, the polynomial
$f(t)=t^{r-1}+\cdots+t+1$ is irreducible over $\F_q$. Now $g$ has
characteristic polynomial $f(t)(t-1)^{d+1-r}$ on $Y$. If $p\nmid
n$, its characteristic polynomial is $f(t)(t-1)^{d-r}$ on $V\cong
Y/D$. Also if $p\mid n$ then a direct calculation shows that $g$
has characteristic polynomial $f(t)(t-1)^{d-r}$ on $W$, and
therefore $f(t)(t-1)^{d-1-r}$ on $V=W/D$. Hence $g$ is a stingray
element on $V$ in these cases.  Finally, if $r=d/2+1$ and $g$ is a
product of two $r$-cycles, then $g$ has characteristic polynomial
$(t^r-1)^2$ on $Y$, and hence the characteristic polynomial of $g$
on $V$ has a repeated eigenvalue which is not~1. Thus $g$ is not a
stingray element. This verifies the entries in the last column of
Table~\ref{t:c9-permmod}, and completes the proof.
\end{proof}

The final paragraph of Proposition~\ref{p:delperm} could have been proved using \cite[Lemmas~4.4 and~4.3]{PSY}; however, it is not a straight-forward application of these very general lemmas.

\subsection{Other actions of alternating groups or their covering groups}\label{ss:alt}

Here we use  \cite[Theorem 1.1, Table 1]{DiM} to determine the remaining quasisimple groups with alternating factors which contain $(d/2)$-ppd elements. Note that $G$ is either $A_n$ or a covering group of $A_n$.

\begin{proposition}\label{p:alt2}
 Suppose that $d, q, V$ and $G<\GL_d(q)$  satisfy the hypotheses of Theorem~$\ref{t:stingray}$ such that $S = G/\Z(G)=A_n$ but that $V$ is not the deleted permutation module for $S$ over $\F_q$. Suppose also that $G$ contains an element of order a $(d/2)$-ppd prime $r$. Then $G, d, r$ are as in one of the lines of Table~$\ref{t:c9-alt}$. Moreover, $G$ contains a $(d/2)$-ppd stingray element of order $r$ precisely when there is a $\checkmark$ in the last column of Table~$\ref{t:c9-alt}$.
\end{proposition}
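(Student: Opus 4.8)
The plan is to invoke DiMuro's classification \cite[Theorem~1.1, Table~1]{DiM} as the starting point, which lists all absolutely irreducible quasisimple $G<\GL_d(q)$ with $S=G/\Z(G)=A_n$ containing a $(d/2)$-ppd element of prime-power order, excluding the deleted permutation module case. First I would restrict attention to the representations of $A_n$ and its covering groups in even dimension $d$; extract from DiMuro's table those entries for which a $(d/2)$-ppd prime $r$ divides $|G|$ with $o_r(q)=d/2$; and discard the deleted permutation module line (treated in Proposition~\ref{p:delperm}). Since $r=k(d/2)+1\ge d/2+1$, the possible $(n,d)$ pairs are heavily constrained: the nontrivial irreducible representations of $A_n$ and $2.A_n$ in small degree are either the deleted permutation module (degree $n-1$ or $n-2$), the basic spin representations of $2.A_n$ (degree roughly $2^{\lfloor (n-1)/2\rfloor}$), or exceptional representations for small $n$ (notably $n\le 9$). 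Matching these against the requirement that $d$ is even and $r\ge d/2+1$ with $r\le$ some polynomial bound in $n$ forces $n$ small; one then reads off the finitely many candidates, which are exactly the rows of Table~\ref{t:c9-alt}: $2.A_5,2.A_6,2.A_7$ in dimension $4$; $A_7$ in dimension $4$ over $\F_2$; $2.A_7,3.A_7,6.A_7$ in dimension $6$; and $A_9,2.A_8,2.A_9$ in dimension $8$. The constraints on $q=p^a$ (for instance $p\ge5$ or specified $q$) come from requiring the representation to be defined over $\F_q$ but not a proper subfield, using \cite{Modat}, \cite{Atlas}, or the Hiss--Malle lists \cite{HMc}; for most rows this pins down $q$ or $p$ exactly.

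For part (b), the stingray question, I would work row by row using Lemma~\ref{l:don2} together with explicit character or Brauer character values. For a given row with $(d/2)$-ppd prime $r$, Lemma~\ref{l:don2}(a) (when $r=d/2+1$) says $g$ is a $(d/2)$-stingray element iff $\chi(g)=d/2-1$, and Lemma~\ref{l:don2}(b) (when $r=d+1$) distinguishes types (2.i), (2.ii), and stingray via the values $-1$, $2N$, $d/2+N$ respectively. So the task reduces to looking up, for each relevant conjugacy class of order-$r$ elements in each listed group, the character value in \cite{Atlas} (when $p\nmid|G|$) or the Brauer character value in \cite{Modat} (when $p\mid|G|$), and comparing with the target value. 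For dimension $4$, $r=3$, we need $\chi(g)=1$; for the $2.A_5,2.A_6$ case there is a unique class of elements of order $3$ in the relevant $4$-dimensional representation with $\chi(g)=1$, giving $\checkmark$ (and the $r=5$ elements fail, hence $\times$); for $2.A_7$ one checks that class $3B$ gives $\chi(g)=1$ while $3A$ does not; for $A_7$ over $\F_2$ one uses the Brauer character in \cite{Modat} to see class $3B$ again works. For dimension $6$, $r=7=d+1$, Lemma~\ref{l:don2}(b) applies: one computes $N$, the sum of the relevant primitive $7$th roots of unity appearing as eigenvalues, and checks whether the character value equals $d/2+N=3+N$; in all the $A_7$-related dimension-$6$ rows it does not, yielding $\times$. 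For dimension $8$, $r=5$ (again $r=d/2+1$ is false since $5\ne5$... actually $5=8/2+1$), so we need $\chi(g)=3$; the order-$5$ elements in the $8$-dimensional representations of $A_9$, $2.A_8$, $2.A_9$ have character value $-2$ (see the analogous $\Omega_8^+(2)$ discussion in Remark~\ref{r:O8spin}), so $\times$.

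I anticipate the main obstacle to be the careful bookkeeping of \emph{which} conjugacy class of order-$r$ elements is meant and its precise character value in the specific representation at hand—several of these groups (e.g. $2.A_7$, $A_7$ in characteristic $2$) have multiple classes of elements of the relevant order, with different cycle structures or different power maps, and the representation that is absolutely irreducible over the stated field $\F_q$ must be correctly identified among possibly several of the same degree (for instance the two algebraically conjugate $4$-dimensional representations of $2.A_7$, or the three $8$-dimensional spin-type representations). Distinguishing these requires attention to quasi-equivalence under field automorphisms and duality, exactly as in Remark~\ref{r:O8spin}. A secondary subtlety is confirming the field-of-definition column: one must check, using \cite{HMc} and the AtlasRep computations described in Remark~\ref{rem:verification}, that the representation is realisable over $\F_q$ but not a proper subfield, which for the characteristic-$0$-lifted cases (like $2.A_5$ in dimension $4$) depends on congruence conditions on $p$ determining whether $\sqrt{5}$ or a primitive cube root of unity lies in $\F_p$. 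Once these identifications are nailed down, the verification of each row is a short finite computation, and the independent checks via GAP's AtlasRep package \cite{AtlasRep} and \cite{HMc} described in Remark~\ref{rem:verification} provide a cross-check on the whole table.
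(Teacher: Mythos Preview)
Your approach is essentially the same as the paper's: start from DiMuro's Table~1, impose the constraints $r\mid |S|$, $r=k(d/2)+1$, $o_r(q)=d/2$, winnow down to the finite list, then decide the stingray question row-by-row via Lemma~\ref{l:don2} and character values from \cite{Atlas}, \cite{Modat}.

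One point to flag: your sketch says you ``read off the finitely many candidates, which are exactly the rows of Table~\ref{t:c9-alt}'', but DiMuro's Table~1 contains several entries that satisfy the crude numerical constraints yet must still be \emph{excluded} by additional arguments, and these exclusions are not automatic. Specifically: (i) $G=\nonsplit{6}{A_7}$ with $(d,r)=(12,7)$ and $p=5$ appears in DiMuro's list, but is ruled out because $6\mid q-1$ forces $a$ even, and then $o_7(5^a)\mid o_7(25)=3\ne 6=d/2$; (ii) $G=A_6$ or $\nonsplit{2}{A_6}$ with $(d,r)=(8,5)$ appears, but the character value $\chi(g)=-b5$ is incompatible with any of the three shapes in Remark~\ref{r:e-ppd} (the paper uses Lemma~\ref{l:rootsC} to show the eigenvalue multiplicities are $(2,1,2,2,1)$, so $g$ is not a $4$-ppd element at all); (iii) $G=A_8$ with $d=4$, $q=2$ must be excluded because $A_8\cong\GL_4(2)$ is a classical group, violating the hypothesis of Theorem~\ref{t:stingray}. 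These are exactly the kind of bookkeeping you anticipate, but (ii) in particular requires a short eigenvalue-multiplicity argument rather than a character-value lookup, so be prepared for that.
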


\begin{proof}
By \cite[Theorem 1.1]{DiM}, one of the lines of 
\cite[Table 1]{DiM} holds. Now $r\nmid q-1$ implies $r\mid |S|$, $r\leq n$, and also $r=k(d/2)+1 \geq d/2 + 1$. 
Each group $S=A_n$ arising in \cite[Table 1]{DiM} has $n\leq 14$ and hence $1+d/2\leq r\leq 13$, so $d\leq 24$. Moreover, if $r$ is 11 or 13, then $n\geq 11$, and by \cite[Table 1]{DiM}, the only possibilities have $d=16$ or $32$, and since $r=k (d/2)+1 \in\{11, 13\}$, we see that this is not possible. Hence $r\leq 7$ and the only possibilities in \cite[Table 1]{DiM} for $r=k (d/2)+1$ and $d$ are: (i) $r=3, d=4$; (ii) $r=5, d=4$ or $8$; or (iii) $r=7, d= 4, 6$ or $12$. 
We show that one of the lines of Table~\ref{t:c9-alt} holds. 

If $(r,d)=(7,12)$,  then the only possibility in \cite[Table 1]{DiM} is $G=6.A_7$ with $p=5$. Then $6=|\Z(G)|$ divides $q-1=|\Z(\GL_d(q))|$. Thus $q=5^a\equiv (-1)^a\equiv 1\pmod{6}$, and hence $a$ is even.  Now  $o_7(q)=d/2=6$, but $o_7(5^a)$ divides $o_7(5^2)=3$, which is a contradiction.

    If $(r,d)=(5,8)$,  then $p\ne5$ and the possibilities in \cite[Table 1]{DiM} are precisely the groups in lines 7 and 8 of  Table~\ref{t:c9-alt}, together with the following possibilities:
    \begin{center}
        $G=A_6$ with $p\ne 3$, or $G=\nonsplit{2}{A_6}$ with $p\geq7$.
    \end{center} 
    Since $o_5(q)=d/2=4$, we have  $p\equiv 2, 3\pmod{5}$ and $a$ is odd. In line 7 of Table~\ref{t:c9-alt}, $G=A_9$, and the table \cite[Table 1]{DiM} gives $p=2$. A {\sf GAP} computation with  the AtlasRep package shows that $G=A_9$ is an absolutely irreducible subgroup of $\GL_8(2)$ and hence we have $q=2$. (This $A_9$ action is not the fully deleted permutation action, see \cite[p.\,85]{Modat}.) We show that the groups $G=A_6$ with $p\ne 3$, and $G=\nonsplit{2}{A_6}$ with $p\geq7$ do not give examples. Let $g\in G$ have order $r=5$. By \cite[p.\,5]{Atlas} and \cite[p.\,4]{Modat}, the Brauer character value of $g$ is $-b5$ where $b5=z+z^4$ with $z^5=1\neq z$ (see \cite[p.\,xxii]{Atlas}).  Since $p\ne5$, $g$ is conjugate over the algebraic closure of $\F_q$ to a diagonal matrix having entries $z^i$ with multiplicity $c_i$ for $0\leq i\leq 4$, and with $\sum_{i=0}^4 c_i=8$. Thus the character value satisfies $\sum_{i=0}^4 c_iz^i = -z-z^4$, that is to say, 
    \[
    (c_1+1)z + c_2 z^2 + c_3 z^3 + (c_4+1)z^4 = -c_0,\quad \text{with}\quad \sum_{i=0}^4 c_i=8,
    \]
    and it follows from Lemma~\ref{l:rootsC} that $(c_0,c_1,c_2,c_3,c_4)=(2, 1, 2, 2, 1)$, which conflicts with each of the possibilities for a $4$-ppd element $g$ of order $5$ given in Remark~\ref{r:e-ppd}. (In fact one may deduce that  $g$ is a $2$-ppd element with a $2$-dimensional fixed point subspace in $V$ and three $2$-dimensional irreducible composition factors, as in Definition~\ref{d:ppd}.)

    If $(r,d)=(7,6)$, then by \cite[Table 1]{DiM} one of the
      lines 4, 5 or 6 of Table~\ref{t:c9-alt} holds. Since $d=6$, we
      must have $o_7(q)=3$, and hence either $p\equiv 3,5\pmod{7}$
      with $\gcd(a,6)=2$, or $p\equiv 2,4\pmod{7}$ with $\gcd(a,3)=1$.
      In either case we have infinitely many examples in lines 5 and
      6. However, in line 4 we have $p=3$ and the requirement is that
      $a$ is even, and a {\sf GAP} computation with the AtlasRep
      package~\cite{AtlasRep} shows that $G=\nonsplit{2}{A_7}$ is an
      irreducible subgroup of in $\GL_6(9)$ and hence we have $q=9$ in
      line 4. 

    If $d=4$ and $r=3, 5$ or $7$, then by \cite[Table 1]{DiM}, one of the following: 
    $G$ is $\nonsplit{2}{A_5}$ or $\nonsplit{2}{A_6}$ with $p\geq 5$ and $r=3,5$ (as in line 1 of
    Table~\ref{t:c9-alt}); or $G=\nonsplit{2}{A_7}$ with either $r=3$ and $p\geq5$, or $r=5,7$ 
    and $p\geq3$ (as in line 2); or $G$ is $A_7$ or $A_8$ with $p=2$.
    We have $A_7< A_8\cong\GL_4(2) \leq \GL_4(q)$, and as $G$ is not
    realisable over a proper subfield, $q=2$, and further, as $G$ is
      not a classical group, $G$ is not $A_8$ and we only have $G=A_7$
      in line 3 of Table~\ref{t:c9-alt}. Also, as $r$ is a primitive
    prime divisor of $q^2-1=3$ it follows that $r=3$, and in this case
    we do find $2$-stingray elements acting nontrivially on a unique
    $2$-subspace and fixing a complement, namely class $3B$ (see
    \cite[pp. 13 and 48]{Modat}).

    Now we determine which of the other groups $G$ in
    Table~\ref{t:c9-alt} contains a $(d/2)$-ppd stingray element $g$
    of prime order $r$. This involves inspecting the Brauer characters
    in \cite{Modat} for these representations where $p$ divides $|G|$,
    and the character tables in \cite{Atlas} for those representations
    where $p$ does not divide $|G|$. If $r=1+k(d/2)$ then a
    $(d/2)$-stingray element $g$ is conjugate in
    $\GL_d(\overline{\F_q})$ to a diagonal matrix ${\rm diag}(\eta_1,
    \eta_2,\dots, \eta_{d/2}, 1,\dots,1)$, where the $\eta_i$ are
    pairwise distinct primitive $r^{\rm th}$ roots of unity in
    $\overline{\F_q}$, an algebraic closure of $\F_q$. The Brauer
    character is the trace of this matrix, namely $\chi(g)=d/2 +
    \sum_{i=1}^{d/2} \eta_i$. We also use Lemma~\ref{l:don2}.

 \medskip\noindent   
\emph{Line $1$.}\quad 
Here $d=4$ and there are $1$ or $2$ representations of dimension $4$ for $n=5$ or $6$, respectively. If $r=|g|=5$, then  $p>5$ and  $\chi(g)=-1$, see \cite[pp. 2 and 5] {Atlas}, so $g$ is not a $2$-stingray element by Lemma~\ref{l:don2}(b).  Hence $r=|g|=3$, and we have $\chi(g)=1$ if $n=5$, and if $n=6$ then for each of the two representations the character value $\chi(g)$ on the two classes of elements of order $3$ is $-2$ or $1$, see \cite[pp. 2 and 5] {Atlas}.  
It follows from Lemma~\ref{l:don2}(a) that $g$ is a $2$-stingray element if $n=5$; and if $n=6$ then this is true for each of the two representations for exactly one of the conjugacy classes of elements of $G$ of order $3$.

 \medskip\noindent   
\emph{Line $2$.}\quad 
Here $d=4$ and there are two representations of degree $4$, see \cite{Modat} and \cite[p. 10]{Atlas}. If $r=|g|=5$ then $\chi(g)=-1$ and by Lemma~\ref{l:don2}(b), $g$ is not a $2$-stingray element. If $r=|g|=7$ then (see \cite[p. 10]{Atlas} and \cite[p.13]{Modat}) $\chi(g)=-b_7$ where $b_7=\omega+\omega^2+\omega^4$ for $\omega=e^{2\pi i/7}$, see \cite[p. xxvii]{Atlas}.  Setting $-b_7 = 2 + \eta_1+\eta_2$ leads to a sum of $7^{\rm th}$ roots of unity violating Lemma~\ref{l:rootsC}. Thus $r=|g|=3$, and there are exactly two distinct primitive $3^{rd}$ roots of unity giving $1=2+\eta_1+\eta_2=\chi(g)$, and exactly one of the two conjugacy classes of elements of order $3$ has character value $\chi(g)=1$ (namely class $3B$), and both representations of degree $4$ give examples.

 \medskip\noindent   
\emph{Lines $4, 5, 6$.}\quad 
Here $d=6$, $G=x.A_7$ with $x\in\{2,3,6\}$, and $r=|g|=7$. 
By \cite[p. 10]{Atlas} and \cite[p.13-14]{Modat}) $\chi(g)=-1$, so $g$ is not a $3$-stingray element by Lemma~\ref{l:don2}(b).

\medskip\noindent   
\emph{Lines $7, 8$.}\quad 
Here $d=8$, $G=\nonsplit{2}{A_8}$ or $\nonsplit{2}{A_9}$,  $r=|g|=5$, and $p\ne 2$. 
By \cite[pp. 22, 37]{Atlas} and \cite[pp. 49, 51]{Modat}) $\chi(g)=-2$,  so $g$ is not a $3$-stingray element by Lemma~\ref{l:don2}(a).
\end{proof}

\section{Quasisimple groups involving a sporadic group }\label{s:spor}

Here we consider the case where $G$ is a quasisimple subgroup of $\GL_d(q)$ containing a $(d/2)$-ppd element of prime order $r$, such that $S\coloneq G/\Z(G)$ is a sporadic simple group. We use \cite[Theorem 1.1, Table 2]{DiM} to determine the possibilities for $G, d, r$, and then, in view of Remark~\ref{r:e-ppd}, we apply results from \cite{DiMPZ} to show that none of these examples contains a $(d/2)$-ppd stingray element.

\begin{proposition}\label{p:c9-spo}
 Suppose that $d, q, V$ and $G<\GL_d(q)$  satisfy the hypotheses of Theorem~$\ref{t:stingray}$ such that $S = G/\Z(G)$ is a sporadic simple group,  and that $G$ contains an element of order a $(d/2)$-ppd prime $r$. Then $G, d, r$ are as in one of the lines of Table~$\ref{t:c9-spo}$,  and none  of the $(d/2)$-ppd elements in these groups are $(d/2)$-ppd stingray elements.
\end{proposition}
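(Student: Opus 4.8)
The plan is to follow the same two-stage strategy used in Propositions~\ref{p:delperm} and~\ref{p:alt2}: first pin down the list of candidate triples $(G,d,r)$, then eliminate stingray elements case by case. For the first stage I would invoke \cite[Theorem~1.1, Table~2]{DiM} to obtain the finite list of quasisimple $G$ with $G/\Z(G)$ sporadic admitting an absolutely irreducible representation of even degree $d$. For each such $G$ I would impose the arithmetic constraints that always hold here: $r$ is a $(d/2)$-ppd prime, so $r\equiv 1\pmod{d/2}$, $r\geq d/2+1$, $r\mid |S|$ (using that $r\nmid|\Z(G)|$ since $o_r(q)=d/2\geq 2$), and $o_r(q)=d/2$. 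This last divisibility condition, together with the constraint that $|\Z(G)|$ divides $q-1$, is what forces the specific field values $q$ appearing in the fifth column of Table~\ref{t:c9-spo} (as in the $6.A_7$ and $2.A_6$ eliminations in the previous section). Matching the degrees $d$ from \cite[Table~2]{DiM} against the divisibility $r\equiv 1\pmod{d/2}$ with $r\mid|S|$ should leave precisely the thirteen lines of Table~\ref{t:c9-spo}.

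For the second stage, the key observation is that each candidate has a $(d/2)$-ppd element $g$ of prime order $r$, and by Remark~\ref{r:e-ppd} such a $g$ is either almost cyclic (types (1) and (2.i)) or of type (2.ii); in particular if $g$ were a $(d/2)$-stingray element it would be almost cyclic. So it suffices to show that none of these groups contains an almost cyclic element of order $r$ with the required Jordan form, or more simply to invoke the classification results of \cite{DiMPZ} for absolutely irreducible quasisimple groups containing almost cyclic elements: if a sporadic quasisimple $G$ in one of these dimensions contained a $(d/2)$-stingray element then $(G,d,r)$ would have to appear in the relevant tables of \cite{DiMPZ}, and none of the thirteen lines do. Alternatively, and more self-containedly, for the $p$-modular cases I would read off the Brauer character value $\chi(g)$ from \cite{Modat} and apply Lemma~\ref{l:don2}: when $r=d/2+1$ one needs $\chi(g)=d/2-1$, and when $r=d+1$ one needs $\chi(g)=d/2+N$; in every line the tabulated character value is instead $-1$, $-2$, $0$, or $2N$, ruling out (1). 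For the cross-characteristic cases (where $p\nmid|G|$) I would use the ordinary character tables in \cite{Atlas} in the same way, writing $\chi(g)=d/2+\sum\eta_i$ and applying Lemma~\ref{l:rootsC} to see the required multiplicity pattern does not occur.

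Concretely, I expect most lines to be dispatched by a single character value: e.g.\ for $\Mathieu_{11}$ in dimension $10$ with $r=11=d+1$, one checks $\chi(g)\ne 5+N$ for $N$ a sum of five distinct nontrivial $11$th roots of unity; for $\Janko_1$ in dimension $20$ with $r=11=d/2+1$, one checks $\chi(g)\ne 9$; for $\Co_1$ in dimension $24$ with $r=13=d/2+1$, one checks $\chi(g)\ne 11$; and so on through the table. A handful of lines (the Suzuki-chain groups $6.\Sz$, the Rudvalis group $2.\Ru$, and the larger covers) may require care about which covering group and which class of order-$r$ elements is meant, and here citing the almost-cyclic classification \cite{DiMPZ} is cleaner than hand computation.

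The hard part will be organisational rather than mathematical: making sure the list extracted from \cite[Table~2]{DiM} is complete and correctly intersected with the ppd constraints — in particular handling the several lines where more than one covering group or more than one field order is possible (lines~1, 6, 7, 8, 12, 13 of Table~\ref{t:c9-spo}) — and confirming for each that the relevant character value or almost-cyclic classification genuinely excludes a stingray element. The verification described in Remark~\ref{rem:verification}, using the Hiss--Malle lists and the \texttt{AtlasRep} package, provides an independent check on both the candidate list and the character values, and I would lean on it to catch any omission.
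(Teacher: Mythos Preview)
Your proposal is correct and follows essentially the same two-stage approach as the paper: extract candidates from \cite[Table~2]{DiM} filtered by the arithmetic constraints $r\equiv1\pmod{d/2}$, $r\mid|S|$, $o_r(q)=d/2$, and then rule out stingray elements via \cite{DiMPZ}. One small sharpening worth noting: the paper's use of \cite{DiMPZ} is more structured than ``none of the thirteen lines appear'' --- it splits the table into two groups, invoking \cite[Theorem~7.1]{DiMPZ} to show that for most lines the $(d/2)$-ppd elements are actually cyclic (hence type~(2.i), not stingray), and then observing that the remaining lines~5, 8, 10, 12 are absent from \cite[Theorem~7.2]{DiMPZ}, so those elements are not even almost cyclic (type~(2.ii)). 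Your character-table alternative would also work but is not needed once this dichotomy is in hand.
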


\begin{proof}
    By  \cite[Theorem 1.1]{DiM}, one of the lines of  \cite[Table 2]{DiM}  holds. To find the examples containing a $(d/2)$-ppd element of prime order $r$ we use the facts that $r\mid |S|$ and $r\equiv 1\pmod{d/2}$, and also $r\ne p$ while $o_r(q)=d/2$. 

\medskip\noindent
    \emph{Case: $S$ is one of the Mathieu groups.}\quad 
    Suppose first that $S=\Mathieu_{11}$ or $\Mathieu_{12}$. Then the conditions $d$ even and $r\equiv 1\pmod{d/2}$ imply that $(d,r)=(10,11)$, as in line 1 of Table~\ref{t:c9-spo}, with no restrictions on $p$ if $G=\Mathieu_{11}$ (except that $o_{11}(q)=5$), $p\geq3$ if $G=\nonsplit{2}{\Mathieu_{12}}$, and with  $p \in\{2,3\}$ if $G=\Mathieu_{12}$. We consider the last case further. For both $p=2$ and $p=3$ we confirmed using the AtlasRep package~\cite{AtlasRep} in {\sf GAP}~\cite{GAP} that $G=\Mathieu_{12}$ is realisable over $\F_p$, and hence $q=p$. However, if $q=2$ then $o_{11}(q)=10=d$, which is a contradiction, and so $q=3$ in this case.
    
    Next if $S=\Mathieu_{22}$, then the conditions that $d$ is even, $r\equiv 1\pmod{d/2}$, and $r\ne p$, imply that either $(d,r)= (6,7)$ with $G=\nonsplit{3}{\Mathieu_{22}}$ and $p=2$, or $(d,r)= (10,11)$ with $G=\Mathieu_{22}$ and $p=2$, or $G=\nonsplit{2}{\Mathieu_{22}}$ and $p\geq3$. In the first case  a {\sf GAP} computation using the AtlasRep package~\cite{AtlasRep}   shows that the smallest field over which this representation is realisable is $\F_4$ and hence $q=4$, as in line 2 of Table~\ref{t:c9-spo}. If $(d,r)= (10,11)$ with $G=\Mathieu_{22}$, a similar computation shows that $G$ is an irreducible subgroup of $\GL_{10}(2)$ and hence $q=2$. However this implies that $o_{11}(q)=10=d$, which is a contradiction, so in this case we only have the group $G=\nonsplit{2}{\Mathieu_{22}}$ with $p\geq3$, and this satisfies line 3 of Table~\ref{t:c9-spo}.

    Now suppose that $S=\Mathieu_{23}$ or $\Mathieu_{24}$.
    The conditions $d$ even and $r\equiv 1\pmod{d/2}$ imply that $(d,r)=(22,23),$ or $(44,23)$. However if $d=44$ then by \cite[Table 2]{DiM} the prime $p=2$, and we have $2^{11}\equiv 1\pmod{23}$. Thus $o_{23}(2^a)$ is never equal to $d/2=22$, and so $(d,r)=(22,23)$. Moreover, by \cite[Table 2]{DiM}, either $G=\Mathieu_{23}$ with $p\geq 3$, or $G=\Mathieu_{24}$ with $p=3$.
    In the latter case we confirmed using the AtlasRep package~\cite{AtlasRep} that $G=\Mathieu_{24}$ is realisable over $\F_3$, and hence $q=3$. Thus Table~\ref{t:c9-spo} line 4 holds.

    \medskip\noindent
    \emph{Case: $S$ is one of the other sporadic groups.}\quad 
     Checking these groups in \cite[Table 2]{DiM} we find that the only examples for which $d$ is even,  $r\equiv 1\pmod{d/2}$, and $o_r(q)=d/2$, are those in lines 5--13 of Table~\ref{t:c9-spo}. We make several comments about this checking procedure. First, the condition $r\ne p$ while $o_r(q)=d/2$  ruled out a few cases. For example, if $G=\nonsplit{2}{\Sz}<\GL_{12}(3)$ then we cannot have $r=13$ since if $13$ divides $(3^a)^{d/2}-1=3^{6a}-1$ then $13$ already divides $3^{3a}-1$; and similarly  if $G=\nonsplit{3}{\Sz}<\GL_{12}(4)$ then we cannot have $r=7$. Also, if $G=Ru<\GL_{28}(q)$ with $r=29$ and $p=2$, then a {\sf GAP} computation using the AtlasRep package~\cite{AtlasRep}   showed that  this representation is realisable over $\F_2$ and hence $q=2$, whereas $o_{29}(2)\ne 14$. In cases where the characteristic $p$ was constrained to be one of at most two values we found (for all but one group), using the AtlasRep package~\cite{AtlasRep}, the smallest field over which the representation  was realisable, thus determining the possible values of $q$. The group for which we were unable to do this computationally was  $G=Co_1$; by \cite[Table 2]{DiM}, $G\leq \GL_{24}(2^a)$, and here it follows from \cite[Result 4.2 and Proposition 4.3]{R18} that $G$ is an irreducible subgroup of $\GL_{24}(2)$ so that $q=2$, as in line 8 of Table~\ref{t:c9-spo}.

 \medskip\noindent
    \emph{Existence of stingray elements.}\quad 
    Finally we determine which of the groups in Table~\ref{t:c9-spo} contain $(d/2)$-stingray elements, using the results of \cite[Section 7]{DiMPZ}.
    By \cite[Theorem 7.1]{DiMPZ}, for each of the lines of Table~$\ref{t:c9-spo}$ \emph{except the lines $5, 8, 10, 12$}, the $(d/2)$-ppd elements are cyclic and hence are of type (2.i) of Remark~\ref{r:e-ppd}, so are not stingray elements. Moreover, none of the cases in the remaining lines $5, 8, 10, 12$ of Table~$\ref{t:c9-spo}$ occurs in \cite[Theorem 7.2]{DiMPZ}, and hence in these cases the $(d/2)$-ppd elements are not even almost cyclic, and hence again they are not stingray elements (they are of type (2.ii) of Remark~\ref{r:e-ppd}).  
\end{proof}

\section{Quasisimple groups involving a Lie type simple group: cross characteristic case}\label{s:cross}

Now we consider the case where $G$ is a quasisimple subgroup of $\GL_d(q)$ containing a $(d/2)$-ppd element of prime order $r$, such that $S\coloneq G/\Z(G)$ is a  Lie type simple group over a field of characteristic different from $p$. Suppose that $S$ is  not isomorphic to an alternating group. 
 We use \cite[Theorem 1.1, Table 3--5]{DiM} to determine the possibilities for~$G, d, r$, 

\begin{proposition}\label{p:cross}
     Suppose that $d, q, V$ and $G<\GL_d(q)$  satisfy the hypotheses of Theorem~$\ref{t:stingray}$ such that $S = G/\Z(G)$  is a  Lie type simple group over a field of characteristic different from $p$, and that $S$ is  not isomorphic to an alternating group. Suppose also that $G$ contains an element of order a $(d/2)$-ppd prime $r$. Then either $G, d, r$ are as in one of the lines of Table~$\ref{t:c9-cross-u}$ or  $S, d, r$ are as in one of the lines of Table~$\ref{t:c9-cross-i}$.  Moreover, the quasisimple group $G$ in one of the lines of these tables contains a $(d/2)$-ppd stingray element of order $r$ if and only if the entry in the last column is $\checkmark$.
\end{proposition}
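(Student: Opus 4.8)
The plan is to follow the same template used in Propositions~\ref{p:delperm}, \ref{p:alt2} and~\ref{p:c9-spo}: first extract from DiMuro's \cite[Theorem~1.1, Tables~3--5]{DiM} the complete list of quasisimple $G$ with $S=G/\Z(G)$ of cross-characteristic Lie type and $r\mid|S|$, then impose the arithmetic constraints $r\equiv1\pmod{d/2}$, $r\ne p$, and $o_r(q)=d/2$ to cut the list down, and finally decide the stingray question case by case using the Brauer-character criterion of Lemma~\ref{l:don2} together with the almost-cyclic classifications of \cite{DiMPZ,DiMZ,Z08}. The split into Table~\ref{t:c9-cross-u} (sporadic individual cases) and Table~\ref{t:c9-cross-i} (infinite families $\PSp_{2n}(s)$, $\PSU_n(s)$, $\PSL_n(s)$, $\PSL_2(s)$) mirrors the structure of DiMuro's tables, and I would organise the proof by first disposing of the infinite families and then handling the finitely many exceptional entries.

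For the infinite families (Table~\ref{t:c9-cross-i}), the key point is that in every line the $(d/2)$-ppd element $g$ of order $r$ acts, in DiMuro's construction, either as a cyclic (regular semisimple, type~(2.i)) element or with a repeated irreducible factor (type~(2.ii)); in neither case is $g$ a stingray element, so all entries get $\times$. I would verify this via the eigenvalue-multiplicity analysis: in each family $r=(s^n\pm1)/(\text{small})$ is itself a ppd prime for the relevant exponent, the representation is (a constituent of) a Weil/permutation-type module, and an explicit count of eigenvalue multiplicities of $g$ — using Lemma~\ref{l:rootsC} to pin the multiplicities down from the known Brauer character value — shows the $1$-eigenspace has dimension $\ne d/2$, or else that the non-trivial eigenvalues do not all have multiplicity $1$. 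Lines~1--3 are Weil representations of $\PSp$, $\PSU$, $\PSL$ where the character values of ppd elements are standard; lines~4--6 are the $\PSL_2(s)$ cases where the relevant irreducible modules have dimensions $s$, $s\pm1$, $(s\pm1)/2$ and the semisimple elements of order $(s+1)/2$ or $s+1$ have well-known character values, again amenable to Lemma~\ref{l:don2}. Alternatively, for those lines one can simply cite that the relevant $(G,d)$ does not appear in the almost-cyclic classification \cite[Theorem~7.1]{DiMPZ} (or \cite{DiMZ,Z08}), forcing type~(2.ii).

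For Table~\ref{t:c9-cross-u} I would go line by line, pinning down $q$ first (using AtlasRep/\Magma\ to find the minimal field of the representation, exactly as in Remark~\ref{rem:verification}) and then computing $\chi(g)$ from \cite{Atlas} or \cite{Modat}. The four $\checkmark$ entries — $\nonsplit{2}{\PSL_2(7)}$ in dimension $4$ with $r=3$ (class $3A$), $\nonsplit{2}{\PSU_4(2)}\cong\Sp_4(3)$ in dimension $4$ with $r=3$ (class $3D$), and $\nonsplit{2}{\POm_8^+(2)}$ in dimension $8$ with $r=5$ (class $5A$) — are each identified by $\chi(g)=d/2-1$ via Lemma~\ref{l:don2}(a) (here $r=d/2+1$ in each); for the $r=d/2$ case $\nonsplit{2}{\PSL_2(7)}$ with $d=4,r=3$ one uses Remark~\ref{r:e-ppd} (only types (1) and (2.ii) are possible when $|g|=3$) plus the character value to decide. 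Every other line gives $\times$: for $r=d+1$ cases (e.g. $\nonsplit{2}{\PSL_2(7)}$, $d=4$, $r=7$; $\nonsplit{2}{\Sz(8)}$, $d=8$, $r=13$) one checks $\chi(g)=-1$ (type (2.i)) via Lemma~\ref{l:don2}(b); for $r=d/2+1$ cases with the wrong character value one gets type (2.ii); and the remaining $r=d+1$ entries are handled the same way. The $8$-dimensional entry $\nonsplit{2}{{\rm P}\Omega_8^+(2)}$ deserves the extra care already given in Remark~\ref{r:O8spin}, where it is shown that exactly one of the three triality-related degree-$8$ representations sends a given order-$5$ element to a $4$-stingray element.

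The main obstacle I anticipate is twofold. First, correctly extracting and normalising DiMuro's tables: \cite{DiM} lists representations of prime-power order $r$ without making the exponent $e$ explicit, and several rows correspond to the same abstract group under twisting by field/diagonal automorphisms or (for $\POm_8^+$) triality, so care is needed not to double-count or miss the minimal-field constraint — this is precisely the ``disentangling'' flagged in the introduction, and the independent AtlasRep/\Magma\ checks of Remark~\ref{rem:verification} are essential here. Second, the borderline stingray decisions in dimension $4$ with $r=3$: since there is a \emph{unique} irreducible quadratic over $\F_q$ when $q\equiv2\pmod3$, a ppd element of order $3$ is either a stingray element (type~(1), $\chi(g)=1$) or of type~(2.ii) ($\chi(g)=-2$), and one must read off the correct class from the modular Atlas — for $\nonsplit{2}{\PSU_4(2)}$ there are several classes of order $3$ (classes $3A,3B,3C,3D$) and only class $3D$ works, which requires a genuine character-table lookup rather than a uniform argument. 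Everything else is routine once DiMuro's list is in hand and Lemma~\ref{l:don2} is applied.
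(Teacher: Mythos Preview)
Your overall strategy matches the paper's proof closely: extract from DiMuro's Tables~3--5, impose the arithmetic constraints, then decide the stingray question via Brauer characters and almost-cyclic results. However, there are two places where your sketch glosses over genuine technical difficulties that the paper handles differently.

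First, the $\nonsplit{2}{\Sz(8)}$ case (line~7 of Table~\ref{t:c9-cross-u}) has $d=8$ and $r=13=3(d/2)+1$, \emph{not} $r=d+1$ as you write, so Lemma~\ref{l:don2}(b) does not apply. The paper deals with this by computing $\chi(g)$, $\chi(g^2)$, $\chi(g^4)$ explicitly (using the Atlas expression $\chi(g)=-1-c_{13}$) and then evaluating the inner product $\langle \chi|_{\langle g\rangle},1\rangle$ directly to show the $1$-eigenspace is zero. Your proposal, which partitions all the $\times$ entries into ``$r=d/2+1$'' and ``$r=d+1$'' cases, misses this.

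Second, for the $\PSL_2(s)$ infinite families (lines~4--6 of Table~\ref{t:c9-cross-i}) your plan to ``check character values via Lemma~\ref{l:don2}'' is not uniform: in line~6 with $r=s$ a prime one has $r=2d+1$, which again falls outside Lemma~\ref{l:don2}. The paper's argument (its Claim~1) instead lifts to characteristic zero via \cite[Lemma~4.1]{DiMZ}, bounds $d\le r+1$ by \cite[Lemma~4.3]{DiMZ}, and then invokes \cite[Theorem~1.2]{Z06} on the number of distinct eigenvalues to force $\deg(g)=d$, contradicting $\deg(g)=d/2+1$ whenever $r\ge7$. Only $r\le5$ survives, and those are dispatched by hand. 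The $\PSp_{2n}(s)$ line is then reduced to this $\PSL_2$ case by restricting to the subgroup $\SL_2(s^n)$, and the $\PSU_n$, $\PSL_n$ lines use \cite[Theorem~2.1]{Z08} to show every nontrivial $r$th root of unity occurs as an eigenvalue. Your alternative of citing an almost-cyclic classification is closer in spirit, but you would need the right results (\cite{Z06,Z08,DiMZ}), not \cite[Theorem~7.1]{DiMPZ}, which concerns sporadic groups. (Minor: there are three $\checkmark$ entries in Table~\ref{t:c9-cross-u}, not four.)
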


\begin{proof}
    By \cite[Theorem 1.1]{DiM}, one of the lines of  \cite[Table 3, 4, or 5]{DiM} holds; \cite[Tables 3 and 4]{DiM} 
  contain individual examples, while~\cite[Table 5]{DiM} contains infinite families of examples. We use the following facts: $d$ is even, $r$ is prime and $r\ne p$, $r$ divides $|S|$, and $o_r(q)=d/2$. Hence $r\equiv 1\pmod{d/2}$.

 \medskip\noindent
    \emph{Examples from  \cite[Tables 3 and 4]{DiM}.}\quad   
  Modulo the following comments we find from \cite[Table 3]{DiM} that the only examples (untwisted groups) are those in lines 2--6 of Table~\ref{t:c9-cross-u}, and from \cite[Table 4]{DiM} that the only examples  (twisted groups) are those in lines 7--9 in  Table~\ref{t:c9-cross-u}. We note that in line 8 of  Table~\ref{t:c9-cross-u}, where $G=2.\PSU_4(2)\cong \Sp_4(3)$ and  $d=4$ with $r\in\{3,5\}$, the prime $p$ is at least $5$, see \cite[Tables 8.9, 8.11, 8.13, 8.14]{BHRD}. In line 2 of Table~\ref{t:c9-cross-u} the group is $\nonsplit{4}{\PSL_3(4)}$ with $d=4$ and $p=3$, and by \cite[Table 3]{DiM} the prime $r$ is $5$ or $7$. A {\sf GAP} computation using the AtlasRep package~\cite{AtlasRep}   shows that the smallest field over which this representation is realisable is $\F_9$ so $q=9$. However $o_7(9)=4=d$, so $r\ne 7$ and only $r=5$ is possible. 
  
  The groups occurring in \cite[Table 3]{DiM}  with $d=6$ are $\nonsplit{2}{\PSL_3(4)}$ with  $p=3$, and $\nonsplit{6}{\PSL_3(4)}$ with $p\ne 2,3$; and in either case $r\in\{5,7\}$. We require $o_r(q)=d/2=3$, and since $o_r(q)$ divides $r-1$ it follows that $r=7$. If $G=\nonsplit{2}{\PSL_3(4)}$ with  $p=3$ then a {\sf GAP} computation using the AtlasRep package~\cite{AtlasRep}   shows that this representation is realisable over $\F_3$ so $q=3$; however $o_7(3)=6\ne d/2$. Thus $G=\nonsplit{6}{\PSL_3(4)}$ as in line 3 of Table~\ref{t:c9-cross-u}.
  
   The groups arising from \cite[Table 4]{DiM} are precisely those listed in lines 7--9 in  Table~\ref{t:c9-cross-u}, except that in line 7 or 9 we only know that $p=5$ or $2$, respectively. In these two cases a  {\sf GAP} computation using the AtlasRep package~\cite{AtlasRep}   shows that $q=5$ or $4$, respectively.

   \medskip\noindent
    \emph{Examples from  \cite[Table 5]{DiM}.}\quad   
  The possibilities from \cite[Table 5]{DiM} require more care in checking and we give some details. We find that the only infinite families of examples are those in Table~\ref{t:c9-cross-i}, with a single example recorded in line 1 of Table~\ref{t:c9-cross-u}.
 The most useful test is that $r\equiv 1\pmod{d/2}$, in particular that $r> d/2$. This test rules out all cases with $S=\PSp_{2n}(s)$ except for line 1 of Table~\ref{t:c9-cross-i}; and all cases with $S=\PSU_{n}(s)$ except for line 2 of Table~\ref{t:c9-cross-i}. For all other examples $S=\PSL_n(s)$. We give some details of the arguments used to treat these cases: 

\smallskip\noindent 
{\it The three lines for `$S=\PSL_n(s), n\geq3$', with $r=(s^n-1)/(s-1)$ and $n$ prime:} Here the values of $r, d$ in  \cite[Table 5]{DiM} imply that $r\in\{d+2, d+1, d\}$, and hence $r=d+1$ as in line 3 of Table~\ref{t:c9-cross-i}.

\smallskip\noindent 
{\it The eleven lines with $S=\PSL_2(s)$, $s\geq 7$, $s\ne 9$:} Some of these lines are a bit tricky to deal with, for example:
    
    (a) In the $6^{\rm th}$ line for this case,  we have $r=(s+1)/3$ and $d\in\{(s\pm 1)/2, s-1, s\}$. Now $r<2\cdot (d/2)+1$ for all these values of $d$ and hence $r=d/2+1$. This implies that $d=(s\pm 1)/2$, and then, solving $(s+1)/3 = 1+(s\pm 1)/4$ yields $s= 5$ or $11$, and $r=2$ or $4$, which is not an odd prime.
    
    (b) In the last five lines of this case we have $d\in\{(s\pm 1)/2, s\pm 1, s\}$ with $s\geq7$ and $r\in\{(s\pm 1)/2, s\pm 1, s\}$.  If $d=s+1$, then since $d$ is even and $r-1$ is divisible by $d/2\geq4$, there are no possibilities for $r$. If $d=s-1$ then, again since $d$ is even and $r-1$ is divisible by $d/2\geq3$, we see that $r$ is either $s$ or $(s+1)/2$, and  by \cite[Table 5]{DiM}, either $s$ is prime, or $r=(s+1)/2$ with $s=t^{2^{c}}$, $t$ an odd prime and $c\geq1$, as in line 5 of  Table~\ref{t:c9-cross-i}.  Next suppose that $d=s$. Then since $d$ is even, it follows from \cite[Table 5]{DiM} that $s=2^c$ for some $c\geq 3$ (since $s\geq7$), and then, since $r-1$ is divisible by $d/2 = 2^{c-1}\geq4$, we see that $r=s+1$, and in particular $s\ne 8$ since $r$ is prime. Checking   \cite[Table 5]{DiM} again, we have $s=2^{2^c}$ with $r$ a Fermat prime, as in line 4 of Table~\ref{t:c9-cross-i}. Similar arguments for the case where $d=(s- 1)/2$ lead to the examples in line 6 of Table~\ref{t:c9-cross-i} (recall that $S\not\equiv L_2(9)$). Finally if $d=(s+1)/2$ then we obtain only the possibility $(d,s)=(4,7)$ as in the first line of Table~\ref{t:c9-cross-u}; in this case the group $G$ must be $\nonsplit{2}{\PSL_2(7)}$ by \cite[p.\,3]{Atlas} and \cite[p.\,3]{Modat}.

\smallskip
    Thus we have determined all possibilities for $S, d, r$.

     \medskip\noindent
    \emph{Existence of stingray elements.}\quad 
    Now we determine whether or not these examples provide $(d/2)$-stingray elements. First we consider the cases where $S=\PSL_2(s)$.

\medskip\noindent   
\emph{Claim $1$. If $S=\PSL_2(s)$ with $s$ coprime to $q$, then $G$ contains a $(d/2)$-ppd stingray element of order $r$ if and only if $s=7, d=4, r=3$, and line $1$ of Table~$\ref{t:c9-cross-u}$ holds.}

Note that, since $S$ is not isomorphic to an alternating group, $s\geq 7$ and $s\ne 9$.
The cases to consider are  line $1$ of Table~\ref{t:c9-cross-u} (with $s=7$ and $d=4$), and  lines 4--6 of Table~\ref{t:c9-cross-i} (with $d\in\{s, s-1, (s-1)/2\}$).  In all these cases $G$ is the image of an irreducible representation $\phi:\SL_2(s)\to \GL_d(q)$. Then, for example by \cite[Lemma 4.1]{DiMZ}, this representation lifts to a representation  over the complex numbers, so it suffices to deal with ordinary representations of $G$. Suppose that $g'\in\SL_2(s)$ is such that $g\coloneq \phi(g')$ is a $(d/2)$-stingray element. Then $1$ occurs as an eigenvalue of $g$ with  multiplicity  $d/2$, and the number of pairwise distinct eigenvalues of $g$, denoted $\deg(g)$, is $d/2+1$ (see Remark~\ref{r:e-ppd}(1)). Moreover,  $g$ is in particular almost cyclic. If $g$ is a semisimple element of $G$, then by \cite[Lemma 4.3]{DiMZ}, $d\leq r+1=|g|+1$ and $r=(s\pm 1)/(2,s+1)$, and 
these restrictions imply that $\deg(g)=d/2+1$ is at most $(r+3)/2$. On the other hand, if $g$ is not a semisimple element of $G$, then $r=s$ and one of $(d,r)=(4,7)$ (line $1$ of Table~\ref{t:c9-cross-u}), or $(d,r)=(s-1,s)$ with $s\geq7$ (line 5 of Table~\ref{t:c9-cross-i}),  or $(d,r)=((s-1)/2,s)$ with $s\geq 11$ (line 6 of Table~\ref{t:c9-cross-i}); and in each of these cases $d\leq r+1$ and $d/2+1\leq (r+3)/2$ also hold. 

Suppose that $r\geq 7$, and note that this always holds if $g$ is not a semisimple element of $G$. Then $d/2+1\leq (r+3)/2<r-1$, and hence by \cite[Theorem 1.2]{Z06}, since $s\geq7$ and $s\ne 9$, one of the cases (A), (B)(1) (with $k=1$) or (C) of that theorem holds, and in each of these cases $\deg(g)$ is equal to $d$, which is a contradiction since $\deg(g)=d/2+1$ and $d>2$. Thus $r\leq 5$, and in particular $g$ is a semisimple element of $G$. Therefore $d\leq r+1\leq 6$ and  $5\geq r=(s\pm 1)/(2,s+1)\geq (s-1)/(2,s-1)$, and since $s\ne 4,5,9$, it follows that $s=7$ or $11$, with $r=3$ or $5$, and $d\leq 4$ or $d\leq 6$, respectively. In the former case, by  \cite{Atlas}, we have $G=\SL_2(7)<\GL_4(q)$ with $r=d/2+1=3$ and the Brauer character value $\chi(g)=1=d/2-1$ so $g$ is indeed a $2$-stingray element by Lemma~\ref{l:don2}(a) (and $g$ is in the unique class 3A of $3$-elements). In the latter case, since $d\leq 6$ and  $d/2$ divides $r-1=4$, we conclude that $d=4$. However the values $(s,d,r)=(7,4,5)$ do not occur in either  line $1$ of Table~\ref{t:c9-cross-u} or in lines 4--6 of Table~\ref{t:c9-cross-i}. 
Thus Claim 1 is proved.

 \medskip\noindent   
\emph{Claim $2$. None of the $(d/2)$-ppd elements in Table~$\ref{t:c9-cross-i}$ are stingray elements.}\quad    

 By Claim 1 we may assume that $G, d, r$ are as in line 1, 2 or 3 of Table~$\ref{t:c9-cross-i}$.  Suppose that $g\in G$ is a $(d/2)$-stingray element of order $r$. In particular $g$ is almost cyclic  of $(d/2)$-ppd prime order $r$.
     In each case $r=d+1$ and, embedding $\GL_d(q)$ in $\GL_d(\overline{\F_q})$ we may consider the representation over the algebraic closure $\overline{\F_q}$ of $\F_q$. Also in each case $r$ is an odd prime so $d=r-1$ is even. In line 1 we have $S=\PSp_{2n}(s)$ with $s$ odd and $n$ even, and with $d=(s^n-1)/2$ and  $r=(s^n+1)/2$. Thus $d$ 
       is even, and we note that $d$ is the minimum dimension of an irreducible representation of $\SL_2(s^n)$ (in characteristic different from $s$) and that the group
      $\Sp_{2n}(s)$ and its subgroup $\Sp_{2}(s^n)\cong \SL_2(s^n)$ share a common centre. Therefore we may consider $g\in H\coloneq \SL_2(s^n)<G = \Sp_{2n}(s) <\GL_d(q)$ if $q$ is odd, or  $g\in H\coloneq \PSL_2(s^n)<G = \PSp_{2n}(s) <\GL_d(q)$ if $q$ is even.  As the action of $H$ is faithful and $d$ is the minimal degree of a nontrivial irreducible representation of $H$ over $\F_q$ containing a $(d/2)$-ppd element, it follows that line 5 of Table~\ref{t:c9-cross-i} holds with $\PSL_2(s^n), s^n$ in place of $S,s$. Thus $g$ is not a stingray element by Claim 1. 
      
      Finally, consider lines 2 and 3 with $S=\PSU_n(s)$ or $S=\PSL_n(s)$, $n$ an odd prime. In both cases, $d=(s^n-s)/(s-\delta)$ and $r=(s^n-\delta)/(s-\delta)$, where $\delta=1$ or $-1$, respectively. Then a Sylow $r$-subgroup of $G$ is cyclic and, as $d= r-1$, the action of $g$ on $V$ has at most $r-1$ distinct eigenvalues. These groups occur in \cite[Theorem 2.1]{Z08} (part (6) and (1), respectively), and from that result,  all nontrivial $r$th-roots of unity occur as eigenvalues of $g$ over $\overline{\F_q}$. Thus $g$ acts fixed point freely on $V$, and is therefore not a stingray element, which is a contradiction. This completes the proof of Claim 2.

 \medskip\noindent   
\emph{Claim $3$. A line of Table~$\ref{t:c9-cross-u}$  corresponds to a $(d/2)$-ppd stingray element if and only if it contains a $\checkmark$ in the final column. }\quad    

By Claim 1, the assertion holds for line 1 of  Table~$\ref{t:c9-cross-u}$ where we have $S=\PSL_2(7)$. Apart from this line there are exactly five groups  in Table~$\ref{t:c9-cross-u}$ for which the prime $r=d/2+1$ occurs: namely from lines 4, 5 (two groups), 6 (with $r=7$) and 8 (with $r=3$). 
In these cases, by Lemma~\ref{l:don2}(a), $g$ is a $(d/2)$-stingray element if and only if the Brauer character $\chi(g)$ is equal to $d/2-1$, namely 
$3, 3, 5, 1$, respectively.
Checking the character tables in \cite{Atlas} and \cite{Modat} carefully reveals that these values occur for precisely two cases, giving examples for line 5, $G=\nonsplit{2}{{\rm P}\Omega_8^+(2)}$, and line 8, $G=\nonsplit{2}{\PSU_4(2)}\cong\Sp_4(3)$, and  in each case the required character value holds for a unique conjugacy class of $r$-elements, namely class $5A$ and $3D$, respectively. Thus Claim 3 is proved for these five cases. 

The next set of seven groups we consider from Table~$\ref{t:c9-cross-u}$ all have the prime $r=d+1$: namely lines 2, 3, 6 (with $r=13$), 8 (with $r=5$), and 9 (three groups). 
Checking the character tables in \cite{Atlas} and \cite{Modat} carefully reveals that in all these cases  the Brauer character value $\chi(g)$ is equal to $-1$, and it therefore follows from Lemma~\ref{l:don2}(b) that $g$ is of type (2.i) and in particular is not a $(d/2)$-stingray element. 
Thus Claim~3 is proved for these eight cases. 

There is only one case remaining to be treated, namely line 7 and here $d=8$, $q=5$, and $r=13=3d/2+1$:
by \cite{Modat} we find that $\chi(g)=-1-c_{13}$ (in `Atlas notation') or a conjugate of this quantity. However, $c_{13}=\frac{1}{3}\sum_{k=1}^{12}\eta^{k^3}=\eta+\eta^{-1}+\eta^5+\eta^{-5}$ by \cite[p.\,xxvii]{Atlas}, where $\eta$  is a primitive $13^{\rm th}$ root of unity. Since  $\chi(g)=-1-c_{13}$,
the following hold:
\begin{align*}
\chi(g)  &=\chi(g^{-1})=\chi(g^5)=\chi(g^{-5}) =-1-\eta-\eta^{-1}-\eta^5-\eta^{-5}, \quad \text{so also}\\
\chi(g^2)&=\chi(g^{-2})=\chi(g^3)=\chi(g^{-3})=
-1-\eta^2-\eta^{-2}-\eta^3-\eta^{-3},\\
\chi(g^4)&=\chi(g^{-4})=\chi(g^6)=\chi(g^{-6})=-1-\eta^4-\eta^{-4}-\eta^6-\eta^{-6}.
\end{align*}
Noting that $\chi(1_G)=8$, we compute the inner product of the character $\chi|_{\langle  g\rangle}$ of $\langle  g\rangle$ with the trivial character (which equals to the multiplicity of the trivial character in $\chi$) and obtain $(8+4\chi(g)+4\chi(g^2)+4\chi(g^4))/13$, 
which equals $(-4-4\sum_{i=1}^{12} \eta^i)/13 =0$. It follows that $g$ is not a $4$-stingray element.  

This completes the proof of Claim 3, and hence Proposition~\ref{p:cross} is proved.
\end{proof}

\section{Quasisimple groups  involving a Lie type simple group: same characteristic case}\label{sec:samechar}

Finally we consider the case where $G$ is a quasisimple subgroup of $\GL_d(q)$ containing a $(d/2)$-ppd element of prime order $r$, such that $S\coloneq G/\Z(G)$ is a  Lie type simple group over a field of characteristic  $p$. Suppose that $S$ is  not isomorphic to an alternating group, and that  $G$ contains an element of order a $(d/2)$-ppd prime. The only analysis available for these groups is in the PhD thesis of DiMuro~\cite{DiMThesis}  given for $e$-ppd elements of \emph{prime power order with $e$ between $d/3$ and $d/2$} (see especially Tables 1.5--1.7 of the thesis, but the tables do not specify the value of $e$). In Subsection~\ref{s:same-ppd} we give an independent analysis based on arguments in the proofs in \cite[Section 7]{GPPS} which gives a list of possibilities (Table~\ref{t:c9-samechar}) for $e=d/2$ containing all the possibilities in \cite{DiMThesis} along with several additional groups, see Remark~\ref{r:missing}. Then in Subsection~\ref{s:same-stingray} we examine the cases in the list to determine which of these groups contains a $(d/2)$-ppd stingray element of prime order.

\begin{proposition}\label{p:same}
     Suppose that $d, q=p^a, V$ and $G<\GL_d(q)$  satisfy the hypotheses of Theorem~$\ref{t:stingray}$ such that $S = G/\Z(G)$  is a  Lie type simple group over  the field $\F_{p^b}$, and that $S$ is  not isomorphic to an alternating group. Suppose also that $G$ contains an element of order a $(d/2)$-ppd prime $r$. Then $S, d, q$ are as in one of the lines of Table~$\ref{t:c9-samechar}$.  
     Moreover, the quasisimple group $G$ in one of the lines of this table contains a $(d/2)$-ppd stingray element of order $r$ if and only if the entry in the last column is $\checkmark$.
\end{proposition}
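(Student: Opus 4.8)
The proof has two halves, corresponding to Subsections~\ref{s:same-ppd} and~\ref{s:same-stingray}. For the first half the plan is to reprove the classification of the possible $(S,d,q)$ directly, adapting the arguments of \cite[Section~7]{GPPS} from the case $e>d/2$ to $e=d/2$, rather than merely invoking \cite{DiMThesis} (which treats prime-power order elements with $d/3<e\le d/2$ but does not isolate $e=d/2$ and, as Remark~\ref{r:missing} records, omits some groups). The input data are: $S$ is simple of Lie type over $\F_{p^b}$ and not alternating; $r\mid|S|$ is a $(d/2)$-ppd prime for $q=p^a$, so $o_r(q)=d/2$ and $r\ge d/2+1$; and $G$ is absolutely irreducible on $V=\F_q^d$, not realisable over a proper subfield, and not containing a classical subgroup. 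Since $S$ lies in defining characteristic and $r\ne p$, the prime $r$ divides the order polynomial of $S$, which bounds $o_r(p)$ — hence the twisted rank of $S$ — linearly in $b$; combining this with the Landazuri--Seitz lower bounds for $d$ \cite{LS} (equivalently Lübeck's tables \cite{Lu} of representations of small dimension) and the inequality $d\le 2(r-1)$ confines $(S,d)$ to a short explicit list. For each surviving $S$ one then invokes Steinberg's twisted tensor product theorem \cite[Theorem~5.4.5]{KL} together with the weight-combinatorial ``ppd-inequality'' estimates of \cite[Section~7]{GPPS} to determine which highest weights $\lambda$ give $\dim V=d$ and actually carry an element of order $r$ acting as a $(d/2)$-ppd element. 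This yields the rows of Table~\ref{t:c9-samechar}; I would cross-check against DiMuro's Tables~1.5--1.7 and flag the four extra groups via Remark~\ref{r:missing}.

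For the second half the plan is to work through Table~\ref{t:c9-samechar} line by line. In each line the module $V$ has a standard shape — a spin module, an exterior or symmetric power $\Lambda^k V_0$ or $S^k V_0$ of the natural module $V_0$ of $S$, or a Frobenius-twisted tensor product $\bigotimes_i V_0^{(p^{j_i})}$ — so the eigenvalue multiset of an element $g$ of order $r$ on $V\otimes\overline{\F_q}$ can be written down explicitly from the weights, or as sums and products of the eigenvalues of $g$ on $V_0$. By Remark~\ref{r:e-ppd}, $g$ is a $(d/2)$-stingray element precisely when the eigenvalue $1$ occurs with multiplicity exactly $d/2$ and the remaining $d/2$ eigenvalues form a single orbit under the Frobenius $x\mapsto x^q$ (equivalently the complementary $(d/2)$-space is $\F_q$-irreducible); for $r=d/2+1$ or $r=d+1$ this is captured by the trace conditions of Lemma~\ref{l:don2}, and otherwise the classifications of almost-cyclic elements in \cite{DiMPZ,DiMZ,Z06,Z08} supply the eigenvalue data directly. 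For the twisted-tensor-product lines (Table~\ref{t:c9-samechar}, lines~1,~2,~5) and the spin/exterior/symmetric lines (lines~3,~4,~7--13) this computation shows in every case that either $1$ has multiplicity greater than $d/2$, or some nontrivial eigenvalue is repeated, or $g$ is fixed-point-free and cyclic — so no $(d/2)$-ppd stingray element occurs and the last column entry is $\times$.

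The one genuinely delicate line — and where I expect the main obstacle to lie — is line~6, $S=\PSL_2(q)$ on a $4$-dimensional module inside $\Sp_4(q)$. Here $G$ is either $\SL_2(q)$ acting via $L(3\omega_1)\cong S^3V_0$ (forcing $p\ge5$), or $\PSL_2(q)$ acting via a twisted tensor product $V_0\otimes V_0^{(p^{j})}$ with $0<j<a$ and $j\ne a/2$ (so that the representation is irreducible and not realisable over a proper subfield). Writing $g$ of order $r=|g|$ with eigenvalues $\mu,\mu^{-1}$ on $V_0$, its eigenvalues on $V$ are $\{\mu^{\pm3},\mu^{\pm1}\}$ in the first case and $\{\mu^{\pm(1+p^{j})},\mu^{\pm(1-p^{j})}\}$ in the second. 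One then determines exactly when precisely two of these equal $1$: in the $S^3$-case this forces $\mu^3=1$, i.e.\ $r=3$, which (together with $o_r(q)=2$) forces $q\equiv2\pmod3$; in the twisted-tensor case it forces $p^{j}\equiv\pm1\pmod r$, and unravelling this against $r\mid q+1$ shows $q=p^{a'c}$ with $c$ odd, $c>1$ and $r\mid p^{a'}+1$, where $2a'=o_r(p)$. In each of these sub-cases the complementary $2$-space is $\F_q$-irreducible because the relevant nontrivial $r$th root of unity lies in $\F_{q^2}\setminus\F_q$, so $g$ is a $4$-stingray element; in every other sub-case $g$ is cyclic (type~(2.i)) or has a repeated nontrivial eigenvalue (type~(2.ii)), hence not a stingray element. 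Collecting these outcomes produces the final column of Table~\ref{t:c9-samechar} and the two $\PSL_2$ rows of Table~\ref{t:stelts}, which completes the proof.
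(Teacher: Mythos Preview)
Your overall strategy is sound and for the first half it matches the paper's approach closely: bound the exponent $x$ with $r\mid q_0^x-1$ against the dimension via Steinberg's tensor product theorem, then sieve. One slip: Landazuri--Seitz \cite{LS} gives lower bounds for \emph{cross}-characteristic representations, not defining characteristic; the relevant bound here is $R_p(S)$ from \cite[Table~5.4.C]{KL} (or L\"ubeck's tables \cite{Lu}), and these are not equivalent to \cite{LS}. The paper organises the sieve via the inequality $2\ell f\ge 2xf\ge d\ge R_p(S)^f$ (with $\ell$ the maximal torus exponent and $b=af/\delta$), which is sharper and more systematic than your ``$d\le 2(r-1)$'' and is what actually pins down $f$ and then the list.

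For the second half your plan diverges from the paper's. You propose case-by-case eigenvalue bookkeeping on each module shape, invoking \cite{DiMPZ,DiMZ,Z06,Z08}. The paper instead uses the structural results of \cite{Z23} to dispose of all non-$\PSL_2$ lines in one stroke: a $(d/2)$-stingray element is almost cyclic with $m_g(V)=d/2$, so by \cite[Theorem~1.4]{Z23} the preimage $h$ cannot be regular semisimple unless $H=\SL_2$; and if $h$ is non-regular then \cite[Theorem~5.11]{Z23} forces $H$ to be classical with $d$ equal to its natural degree, which immediately excludes every spin, symmetric, exterior, and twisted-tensor line in Table~\ref{t:c9-samechar}. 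Your direct approach can be made to work, but it requires a separate weight computation for each of a dozen module types, whereas \cite{Z23} replaces all of that with two citations. For line~6 ($\PSL_2(q)$ in $\Sp_4(q)$) your eigenvalue analysis is essentially what the paper does via \cite[Lemma~5.7]{Z23}, and your conclusions for the $S^3V_0$ and twisted-tensor subcases are correct; just be careful that in the twisted-tensor case the condition $\mu^{p^j-1}=1$ must be excluded not because no eigenvalue equals~$1$ but because it makes $g$ diagonalisable over $\F_q$, hence not a stingray element.
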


\begin{remark}\label{r:missing}
      Table~\ref{t:c9-samechar} contains every example for Proposition~\ref{p:same} of groups arising from \cite[Tables 1.5 to 1.7]{DiMThesis} involving a $(d/2)$-ppd element.  Moreover Table~\ref{t:c9-samechar} contains some possibilities which were mistakenly omitted from the tables in \cite{DiMThesis}, namely the groups in  Table~\ref{t:c9-samechar-tocheck}. In the column `Comments' we give references to confirm these examples.\qed
\end{remark}

  \begin{table}
    \caption{Lie type groups in characteristic $p$ in Table~\ref{t:c9-samechar} missing from  \cite[Table 1.7]{DiMThesis}}
    \vskip2mm
\begin{tabular}{clcl}
  \toprule
Line &$S$ & $d$  & Comments \\
    \midrule
3 &$\PSU_m(q)$   & $28, 28, 36$  & $(m,V)=(7,S^2V_0), (8,\Lambda^2V_0), (9,\Lambda^2V_0)$; see \\
&&&\cite[Prop. 5.4.11, Table 5.4.A]{KL}, or \cite[App. A.10--A.12]{Lu} \\
$9$ &$\POm^-_8(q^{1/2})$   & $8$  & in $\Omega_8^+(q)$; see \cite[[Prop. 5.4.11, Table 5.4.A]{KL} \\
    &   &   &  or \cite[App. A.41]{Lu}, or~\cite[p.\;403]{BHRD}\\
\bottomrule
\end{tabular}
\label{t:c9-samechar-tocheck}
\end{table}

\subsection{Determining the list of quasisimple groups containing a \texorpdfstring{$(d/2)$}{}-ppd element for Proposition~\ref{p:same}}\label{s:same-ppd}

In this subsection we prove the first assertion of Proposition~\ref{p:same}. 
Let $d, q=p^a, V$ and $G<\GL_d(q)$  satisfy the hypotheses of Theorem~$\ref{t:stingray}$, so that $G$ is quasisimple with $S = G/\Z(G)$  a  Lie type simple group $S=\Lie(q_0)$ over  a field $\F_{q_0}$, where $q_0\coloneq p^b$ for some~$b$, and  $S$ is  not isomorphic to an alternating group.  Suppose also that $G$ contains an element $g$ of order a $(d/2)$-ppd prime $r$, so $r=kd/2+1$ for some integer $k$; and  if $r=3$ then $d=4$.
Our analysis is influenced by the arguments in \cite[Section 7]{GPPS}.  

On examining the order of $S=\Lie(q_0)$, see \cite[Tables 5.1.A and 5.1.B]{KL}, we find  that \emph{$r$ is a divisor of  $q_0^x-1$ for some positive integer $x$, and the least such $x$ is at most $\ell$, with $\ell$ as in Table~\ref{t:c9-samechar2}}. Note that in all cases $\ell\geq 2$, and since $r$ does not divide $q-1$, also $x\geq2$. We also record in Table~\ref{t:c9-samechar2} the integer $R_p(S)$, which is the least positive integer $n$ such that $S$ embeds in $\PGL_n(\F)$ for some field $\F$ of characteristic $p$. Since $S$ is not an alternating group, these values are given by \cite[Proposition 5.4.13, see Table 5.4.C]{KL}. Note that either $R_p(S)\geq \ell$, or $S=\PSU_m(q_0)$, or $S={}^3\kern -2pt D_4(q_0)$. 

 \begin{table}
\caption{Properties of $S$ for the proof of Proposition~\ref{p:same}}
    \vskip2mm
\begin{tabular}{lccll}
  \toprule
Type & $S$ & $\ell$ & $R_p(S)$ & Conditions\\
    \midrule
 $A$& $\PSL_m(q_0)$ &   $m$    & $m$    &$m\geq2$\\
 ${}^2\kern-2pt A$& $\PSU_m(q_0)$ &   $2m$    & $m$    &$m\geq3$\\
 $B$& $\POm_m^\circ(q_0)$ &   $m-1$    & $m$    &$m\geq7$, $m, q_0$ both odd\\
 $C$& $\PSp_m(q_0)$ &   $m$    & $m$    &$m\geq4$, $m$ even\\
 $D$, ${}^2\kern-2pt D$ & $\POm^\pm_m(q_0)$ &   $m$    & $m$    &$m\geq8$, $m$ even \\
${}^2\kern-2pt B_2$& ${}^2\kern-2pt B_2(q_0)$ &   $4$    & $4$    &$q_0=2^b$, $b$  odd, $b>1$\\
${}^3\kern-2pt D_4$& ${}^3\kern-2pt D_4(q_0)$ &   $12$    & $8$    &\\
$F_4$& $F_4(q_0)$ &   $12$    & $26-\delta_{p,3}$    &\\
${}^2\kern-2pt F_4$& ${}^2\kern-2pt F_4(q_0)'$ &   $12$    & $26$    &$q_0=2^b$, $b$ odd\\
$G_2$& $G_2(q_0)'$ &   $6$    & $7-\delta_{p,2}$    &\\
${}^2\kern-1pt G_2$& ${}^2\kern-1pt G_2(q_0)'$ &   $6$    & $7$    &$q_0=3^b$, $b$ odd\\
$E_6$, ${}^2\kern-2pt E_6$& $E_6(q_0), {}^2\kern-2pt E_6(q_0)$ &   $12, 18$    & $27$    &\\
$E_7$, $E_8$& $E_7(q_0), E_8(q_0)$ &   $18, 30$    & $56, 248$    &\\
\bottomrule
\end{tabular}
\label{t:c9-samechar2}
\end{table}

Since $V$ is an absolutely irreducible $\F_{p^a}G$-module which is realised over no proper subfield of $\F_{p^a}$, we use \cite[Proposition 5.4.6]{KL} (based on Steinberg's twisted tensor product theorem) to obtain a lower bound for $d=\dim(V)$. One of the following holds: 
\begin{enumerate}[(i)]
    \item For  $S$  untwisted,   \cite[Proposition 5.4.6(i)]{KL} implies that $b=af$ for some integer $f$, and $d=d_0^f$ where $d_0=\dim(M)$ for an irreducible $G$-module $M$ over an algebraically closed field of characteristic $p$; by \cite[Proposition 5.4.4]{KL} it follows that $d=d_0^f\geq R_p(S)^f$. 
   
 \item Next, for $S$ of type ${}^2\kern -2pt A, {}^2\kern -2pt D$ or ${}^2\kern -1pt E_6$,   \cite[Proposition 5.4.6(ii)]{KL} implies that $d=d_0^f\geq R_p(S)^f$, where $d_0=\dim(M)$ for an irreducible $S$-module $M$ over an algebraically closed field of characteristic $p$; and either $M$ is invariant under the corresponding graph automorphism $\tau_0$ defining $S$, and $b, a, f, d_0$ are as in case (i), or $M$ is not invariant under $\tau_0$ and we have $2b=af$ with $a$ even, $f$ an odd integer.
   
 \item For $S={}^3\kern -2pt D_4(q_0)$, \cite[Remark 5.4.7(a)]{KL} implies that $\delta b=af$ and $d=d_0^f$, where either $V$ is invariant under the corresponding graph automorphism $\tau_0$ defining $S$, $\delta=1$, and $d_0\geq 24$ (see \cite[Proposition 5.4.8]{KL}); or $V$ is not invariant under $\tau_0$, $\delta=3$, $3$ divides $a$ but not $f$, and $d_0\geq R_p(S)=8$. 
   
    \item Finally for $S$ one of the groups ${}^2\kern-2pt B_2(2^b)=\Sz(2^b), {}^2\kern-2pt F_4(2^b)'$, or  ${}^2\kern-1pt G_2(3^b)= {\rm Ree}(3^b)$, it follows from \cite[Remark 5.4.7(b)]{KL} that $b=af$ for some integer $f$, and $d\geq R_p(S)^f$. 
\end{enumerate}
 Now we consider several cases for the parameters $a,b,f$.

\medskip\noindent
\emph{Case $1$.\quad $b=af$ for some integer $f$ and $d\geq R_p(S)^f$, and $S$ is not $\PSU_m(q_0)$ or ${}^3\kern -2pt D_4(q_0)$.}  

Note that these conditions hold for all groups in parts (i) and (iv), and sometimes for the relevant groups in part (ii).
As we noted above, if these conditions hold then  $R_p(S)\geq \ell$, and  $r$ divides $q_0^x-1=p^{bx}-1=(p^{a})^{fx}-1$, for some $x\leq \ell$. Since $r$ is a primitive prime divisor of $(p^a)^{d/2}-1$, it follows that $d/2$ divides $fx$. Thus we have the following inequalities:
 \begin{equation}\label{e:c9-samechar1}
 2\ell f\geq 2 x f \geq d\geq R_p(S)^f\geq \ell^f.    
 \end{equation}
 Hence $2f\geq \ell^{f-1}$, and since $\ell\geq2$, one of: 
 \begin{center}
      $f=1$,\  or  $\ell=2$ with $f\in\{2, 3, 4\}$,\ or $\ell\in \{3, 4\}$ with  $f=2$. 
 \end{center}
 Suppose first that $\ell= 2$ with $f\in\{2, 3, 4\}$. Then, by Table~\ref{t:c9-samechar2},  $S=\PSL_2(q_0)$ so $S$ arises in part (i), and hence $d=d_0^f$ with $d_0$ even.   Thus $4f\geq d_0^f\geq 2^f$ by \eqref{e:c9-samechar1}, and this implies that $d_0=2$ and $d=2^f$. Also, as $2\leq x\leq \ell$, we have $x=2$, and so $d/2=2^{f-1}$ divides $xf=2f$, whence $f\in\{2,4\}$.  If $f=2$, then $S=\PSL_2(q^2)$ and  $G < \GL_{4}(q)$ with $r$ a ppd of $q^2-1$. However the only such irreducible quasisimple subgroups of $\GL_4(q)$ either preserve an extension field structure, so are  not absolutely irreducible,  or are classical groups of type $O_4^-$, see \cite[Tables 8.8--8.15]{BHRD}, so give no examples.
  If $f=4$, then $S=\PSL_2(q^4)$ and $G<\GL_{16}(q)$ with $r$ a ppd of $(q^4)^2-1$. There is a twisted tensor product subgroup $G$ of this form \cite[Table 1B, line 1]{S99} (and it is contained in a subgroup $\SL_4(q^2)$), as listed in line 1 of Table~\ref{t:c9-samechar}.
 
Suppose next that $\ell\in \{3, 4\}$ with  $f=2$, so $q_0=q^2$. Then \eqref{e:c9-samechar1} gives $16\geq 4\ell\geq 4x \geq d\geq \ell^2\geq 9$. This  implies that $x=\ell$, so $d/2$ divides $fx=2\ell$, and hence $d=4\ell$.    If $\ell=3$ then, by Table~\ref{t:c9-samechar2}, $S=\PSL_3(q_0)$, so $S$ arises in part (i). However, if $S$ is untwisted, as in part (i), then $d=d_0^f=d_0^2$ and we must have $d_0=\ell=4$. It follows that $\ell$ must be $4$, and $d=R_p(S)^2=16$. Then, by  Table~\ref{t:c9-samechar2}, $S=\PSL_4(q^2)$ or $\PSp_4(q^2)$ (noting that $S\ne \Sz(2^b)$ since $b=2a$ is even).  We get the examples in lines $2$ or $5$ of Table~\ref{t:c9-samechar}: 
$S=\PSL_4(q^2)$ or $\PSp_4(q^2)$ with $G=\GL_{16}(q)$ and $r$ a ppd of $(q^2)^4-1$. These are twisted tensor product subgroups \cite[Table 1B]{S99} on line 1 or 5 of \cite[Table 1B]{S99}, respectively.

Finally assume that $f=1$ so $q_0=q$ and $d/2$ divides $x$. Suppose first that $x>d/2$. Then $x\geq d$, and then \eqref{e:c9-samechar1} implies that $\ell=x=d=R_p(S)$. By the assumptions of Theorem~\ref{t:stingray}, $G$ is not a classical group as defined in \eqref{d:class}, and so the simple group $S$ is an exceptional Lie type group. Then by Table~\ref{t:c9-samechar2}, as $\ell= R_p(S)$,   either $S={}^2\kern-2pt B_2(q)$ with $d=4$, or $S=G_2(q)$ with $p=2$ and $d=6$. However $|{}^2\kern-2pt B_2(q)|$ is not divisible by a primitive prime divisor of $q^2-1$ and the former case does not give an example. On the other hand if $S=G_2(q)$ then $q^3-1$ divides $|S|$ giving an example as in line 12 of Table~\ref{t:c9-samechar}, see \cite[Table 8.29]{BHRD}; we note that by  \cite[Table 8.29]{BHRD}, $G<\Sp_6(q)$ and so $G$ does not contain a $3$-stingray element since all elements of order $r$ act fixed point freely on $V$. 

Thus we may assume that $x=d/2$, and by \eqref{e:c9-samechar1}, $\ell\leq R_p(S)\leq d\leq 2\ell$. Then by Table~\ref{t:c9-samechar2}, for the groups in Case 1,  either $S$ is a classical group, or $S\in\{{}^2\kern-2pt B_2(q), G_2(q), {}^2\kern-1pt G_2(q)', {}^2\kern-2pt E_6(q)\}$.  
 If $S={}^2\kern-2pt B_2(q)$ then, using the entries in Table~\ref{t:c9-samechar2}, $4=\ell\geq x=d/2\geq \ell/2=2,$ and since $r$ is a primitive prime divisor of $q^{d/2}-1=q^x-1$ and $r$ divides $|S|$, it follows on examining $|S|$ that $d/2=4$ so $d=8$. Exactly analogous arguments show that,  if $S=G_2(q)$ or $S={}^2\kern-1pt G_2(q)={\rm Ree}(q)$, then  $d/2=x=6$ so $d=12$, and if $S= {}^2E_6(q)$, then  $d/2=12$ so $d=24$. However by \cite[Theorem 5.7.5]{BHRD}, a quasisimple subgroup $G$ of $\GL_d(q)$ with $(d,G/\Z(G))=(8,{}^2\kern-2pt B_2(q))$, $(12, G_2(q))$, or $(12,{}^2\kern-1pt G_2(q))$,   must be contained in an extension of $O_8^+(q)$ or $\Omega_8^+(q)$, and by  \cite[Table 8.50]{BHRD}, there is no such subgroup. In the last case where $(d,G/\Z(G))=(24,  {}^2E_6(q))$, it follows from \cite[Proposition 5.4.8]{KL} that the parameter $d$ in Case 1 for this group is at least $72$, which is a contradiction. 

    This leaves the cases where $S$ is a classical group, namely one of $\PSL_m(q)$, $\PSp_m(q)$, or $\POm_m^\epsilon(q)$ (where $\epsilon\in\{+, -, \circ\}$). Then $\ell=m$ with $m\geq 2, 4, 7$, respectively (see Table~\ref{t:c9-samechar2}), and  $d\leq 2\ell$. First, if $\ell=m=2$ then $S=\PSL_2(q)$ and $d=4$; by \cite[Proposition 5.3.6(i) and Table 8.13]{BHRD} there is a unique conjugacy class of quasisimple subgroups $G=\SL_2(q)$ lying in $\Sp_4(q)$, as in line 6 of Table~\ref{t:c9-samechar}. Thus we may assume that $\ell=m\geq3$, and then we have $d\leq 2\ell=2m\leq \frac{1}{2}m(m+1), \frac{1}{2}m^2$, or $\frac{1}{2}m^2-1$, respectively, for the various families of classical groups. Thus the conditions of \cite[Proposition 5.4.11]{KL} hold: let $V_0=\F_q^m$ be the natural module for $S$. By assumption $V_0$ is not the natural module for $\GL_d(q)$, and in the light of the discussion in Remark~\ref{r:O8spin}, we have $m\ne d$.
    Then by \cite[Proposition 5.4.11]{KL}, $V, d, m$ are given by \cite[Table 5.4.A]{KL}. We examine each line and use the restrictions: $m\geq 3,4,7,8$ for $S=\PSL_m(q)$, $\PSp_m(q)$, $\POm_m^\circ(q)$, or $\POm_m^\pm(q)$ respectively; $d$ is  even and $4\leq d\leq 2m$. This produces the examples in lines 7--11 of Table~\ref{t:c9-samechar}, given the following comments. First,  for $S=\Sp_4(q)$ or $\POm_8^\pm(q)$, the $4$-dimensional irreducible section of $\Lambda^2 V_0$, or of the $8$-dimensional spin module, respectively, is the natural $S$-module, see \cite[Theorems 5.5.4 and 5.6.1, see Tables 8.8 and 8.44]{BHRD}, so do not give an example. Next, for $S=\PSL_4(q)$, the module $\Lambda^2 V_0$ is the natural $6$-dimensional module for $\Omega_6^+(q)$, by \cite[Proposition 5.4.2]{BHRD}, so does not give an example. Finally, for the $16$-dimensional spin modules for $S=\POm_{10}^\pm(q)$, we obtain the example in line $11$ for $P\Omega_{10}^+(q)$, but the spin representation  embeds $\Omega_{10}^-(q)$ into $\GL_{16}(q^2)$ and is not realisable over $\F_q$, see \cite[Proposition 5.4.9(iii)]{KL}.  This completes  Case 1.

\medskip\noindent
\emph{Case $2$.\quad $S={}^3\kern -2pt D_4(q_0)$.}

Here the conditions of part (iii) hold. Thus, for some integer $f$, we have $\delta b=af$, where either $\delta=1$ and $d=d_0^f\geq 24^f$, or $\delta = 3$ with $3\mid a$ but $3\nmid f$, and $d=d_0^f\geq 8^f$. By Table~\ref{t:c9-samechar2}, $\ell=12$. Now $r$ divides $q_0^x-1=p^{bx}-1=(p^{a/\delta})^{fx}-1$, which divides $(p^a)^{fx}-1$, for some $x\leq \ell=12$, and since $r$ is a primitive prime divisor of $(p^a)^{d/2}-1$, it follows that $d/2$ divides $fx$. Thus we have the following inequalities:
 \begin{equation*}
 24 f\geq 2 x f \geq d=d_0^f, \ \text{with $d_0\geq 24, 8$ for $\delta=1, 3$, respectively.}    
 \end{equation*}
 In particular $24f\geq 2xf\geq 8^f$, so $f=1$ and hence $q=q_0^\delta$, which is $q_0$ or $q_0^3$ for $\delta=1, 3$, respectively. Examining $|S|$ we see that $d/2$ divides $x$, and $x$ divides $12$. Thus either $(\delta,x,d)=(1,12,24)$, or $\delta=3$ and $x\geq4$.  Suppose first that $x=4$. Then $d=8$ so $\delta=3$ and  $r$ is a primitive prime divisor of $(q_0^{3})^4-1$. Since $r$ divides $|S|$ it follows that $r$ divides $q_0^8+q_0^4+1$, which implies that $r\nmid (q_0^4-1)$ (since $r\geq d/2+1=5> 3$), contradicting the fact that $x=4$.
Thus $x>4$, and as $x$ divides $12$, we have  $x=6y$ for some $y\leq 2$. 
Also $r$ divides $p^{bx}-1=p^{(a/\delta)6y}-1 = q^{y(6/\delta)}-1$, so $d/2$ divides $y(6/\delta)=x/\delta$. If $\delta=3$  then $4\leq d/2\leq x/\delta \leq 12/\delta=4$, so $d=8, x=12$, and we have $S={}^3\kern-2pt D_4(q^{1/3})$ which by \cite[Proposition 1.4.1]{K87} is  the centraliser in $\POm_8^+(q)$ of a triality automorphism and hence $G<\GL_8(q)$, see \cite[Table 8.50]{BHRD}, and line 13 of Table~\ref{t:c9-samechar} holds. On the other hand if $\delta=1$, then  $q_0=q$ (since $\delta b=af$) and $(\delta,x,d)=(1,12,24)$. By \cite[Proposition 1.4.1]{K87} and \cite[Table 4.3A and Proposition 4.3.14]{KL}, we have $S={}^3\kern -2pt D_4(q)< \POm_8^+(q^3) < \POm_{24}^+(q)$. However, by a theorem of Steinberg (see \cite[Theorem 5.4.1]{KL}) each absolutely irreducible representation of ${}^3\kern -2pt D_4(q)$ extends to a representation of the algebraic group of type $D_4$, and by \cite[Proposition 5.4.11]{KL}, the smallest dimension (greater than $8$) of such a representation is $28$ or $26$ for $q$ odd or even, respectively. Thus $S$ is not absolutely irreducible, and we have a contradiction.

\medskip
This leaves the groups in part (ii). First we treat the unitary groups.

\medskip\noindent
\emph{Case $3$.\quad   $S={}^2\kern -2pt A_{m-1}(q_0)=\PSU_m(q_0)$. }\quad

Now, we have $\delta b=af$, for some integer $f$ where either $\delta=1$, or $\delta = 2$ with $a$ even and $f$ odd. In either case $d=d_0^f\geq R_p(S)^f$, and by Table~\ref{t:c9-samechar2}, $R_p(S)=m\geq3$ and $\ell=2m$. Now $r$ divides $q_0^x-1=p^{bx}-1=(p^{a/\delta})^{fx}-1$, which divides $(p^a)^{fx}-1$, where $x\leq \ell=2m$, and since $r$ is a primitive prime divisor of $(p^a)^{d/2}-1$, it follows that $d/2$ divides $fx$. Thus we have the following inequalities:
 \begin{equation}\label{e:c9-samechar2}
 4 mf\geq 2 x f \geq d=d_0^f\geq R_p(S)^f= m^f.    
 \end{equation}
 Hence $4f\geq m^{f-1}$, so one of $f=1$, or $f=2$ with $3\leq m\leq 8$, or $f=m=3$. The last case does not hold since it implies $d=3^3$ while $d$ is even. 
 Suppose that $f=2$, so $\delta=1$ (as $f$ is even), $q_0=q^2$, and $d=d_0^2$ with $d_0=\dim(M)$ for an irreducible $S$-module $M$ (over the algebraic closure of $\F_p$) which is invariant under the corresponding graph automorphism $\tau_0$. The latter condition implies by \cite[Proposition 5.4.8]{KL} that $d_0\geq \max\{6, m\}$. Since $d$ is even, also $d_0$ is even, say $d_0=2d_1$, so $d/2=2d_1^2$ is even. Therefore $r$ divides $(q^2)^{d/4}-1=q_0^{d/4}-1$, and so $x\leq d/4$, and it follows from \eqref{e:c9-samechar2} that $x=d/4$.  
 Now
 \[
|S|= |\PSU_m(q_0)|=\frac{1}{\gcd(m,q_0+1)}q_0^{m(m-1)/2}\prod_{i=2}^m(q_0^i- (-1)^i).
 \]
 If $d/4=d_1^2$ is even then $q_0^{d/4}-1$ is one of the factors in the product for $|S|$ and so $d\leq 4m$; while if $d/4$ is odd then the first factor in the product divisible by $r$ is $q_0^{d/2}-1$ and so $d\leq 2m$. In either case $4m\geq d=4d_1^2=d_0^2\geq \max\{6, m\}^2\geq m^2$, and this implies both that  $4\geq m$ and $4m\geq 6^2$, which is a contradiction.

Thus  $f=1$, and we have $4m\geq 2x\geq d\geq m$, and $\delta=1$ or $2$. Suppose first that $\delta=2$, so $q_0^2=q$. 

We note that, by Remark~\ref{r:U}, if $(d,q)= (m,q_0^2)$, then $G$ is a classical subgroup of $\GL_d(q)$, and we are assuming that this is not the case. Hence $d\ne m$,
 
and we have $4m\geq 2x\geq d> m$. 
If $x$ is odd then the first factor in the product for $|S|$ which is divisible by $r$ is $q_0^{2x}-1$ so $2x\leq m$, which contradicts the inequality $2x\geq d>m$. 
Thus $x$ is even, so $r$ divides $q_0^x-1=q^{x/2}-1$ and hence $d/2$ divides $x/2$. This implies that $x= dy$ for some positive integer $y$, and we have $4m\geq 2x=2dy> 2ym$, and hence $y=1$ and $x=d$, so $2m\geq x=d>m$. 
Now  $q_0^x-1$ (if $x/2$ is even), or $q_0^{x/2}+1$ (if $x/2$ is odd), is the first factor in the product for $|S|$ that is divisible by $r$; and since $x>m$ we must have $x/2=d/2$ odd. Further, since $m<d=x\leq 2m \leq \frac{1}{2}m(m+1)$, it follows from \cite[Proposition 5.4.11]{KL} that $d=\frac{1}{2}m(m+1), \frac{1}{2}m(m-1)$, or $(d,m)=(20,6)$. Since $d/2$ is odd and $d\leq 2m$ it follows that $(d,m)=(6,3)$ or $(10,5)$, as in line 4 of Table~\ref{t:c9-samechar}, or $(d,m)=(6,4)$. However, by \cite[Proposition 5.4.2]{BHRD}, in the latter case the $S$-module is the natural module for $\Omega_6^-(q)$, and so does not give an example.

Now suppose that $\delta=1$, so $q_0=q$ and also $x=d/2$, by the definition of $x$. If $d/2$ is odd then the first factor in the product for $|S|$ which is divisible by $r$ is $q^{2x}-1 = q^d-1$ so $d\leq m$. This however implies that $d=m$, but the group $S=\PSU_d(q)$ is not involved in $\GL_q(q)$. Thus $d/2$ is even, and the first factor in the product for $|S|$ which is divisible by $r$ is therefore $q^{d/2}-1$ if $d/4$ is even (so $d/2\leq m$), or $q^{d/4}+1$ if $d/4$ is odd (so $d/4\leq m$). In either case $d\leq 4m$, and since $m\geq3$ this implies that either $d\leq \frac{1}{2}m(m+1)$, or $(d,m)=(20,5), (12,4)$ or $(12,3)$. There are no possibilities for the cases $(d,m)=(12,4)$ or $(12,3)$ by \cite[Proposition 5.4.1]{BHRD}. Moreover, if $d\leq \frac{1}{2}m(m+1)$, then by \cite[Proposition 5.4.11]{KL}, $d= \frac{1}{2}m(m-1)$, or $d=\frac{1}{2}m(m+1)$, or $(d,m)=(20,6)$. Recall that $d/2$ is even and that, if $d/4$ is even, then $d\leq 2m$. None of the possibilities for $d, m$ satisfies $d\leq 2m$. Hence $d/4$ is odd and at most $m$. The possibilities are $(d,m)=(20,6)$, $(28,7)$, $(28,m)$, or $(36,9)$,  as in line 3 of Table~\ref{t:c9-samechar},  or $(d,m)=(20,5)$. However there is no absolutely irreducible representation for $\PSU_5(q)$ in dimension $20$ and characteristic $p$, see \cite[Appendix A.8]{Lu}.

\medskip
Finally we consider the remaining twisted groups in part (ii), for the situation not covered in Case 1. 

\medskip\noindent
\emph{Case $4$.\quad  $S={}^2\kern-2pt D_{m/2}(q_0)=\POm^-_m(q_0)$ ($m$ even, $m\geq8$), or $S={}^2\kern-2pt E_{6}(q_0)$, with $2b=af$, where $a$ is even and $f$ is odd. }\quad
 
By the description in part (ii) we have $d=d_0^f\geq R_p(S)^f$, where by Table~\ref{t:c9-samechar2}, $R_p(S)=m\geq8$ and $\ell=m$ if $S=\POm^-_m(q_0)$, and 
$R_p(S)=27$ and $\ell=12$ if $S={}^2E_{6}(q_0)$. 
Now $r$ divides $q_0^x-1=p^{bx}-1=(p^{a/2})^{fx}-1$, which divides $(p^a)^{fx}-1$, for some (least possible) $x\leq \ell$, and since $r$ is a primitive prime divisor of $(p^a)^{d/2}-1$, it follows that $d/2$ divides $fx$. Thus we have the following inequalities:
 \begin{equation*}
 2\ell f\geq 2 x f \geq d=d_0^f\geq R_p(S)^f.    
 \end{equation*}
If $S={}^2E_{6}(q_0)$ these inequalities imply that $24f=2\ell f\geq R_p(S)^f=27^f$ which is a contradiction. Thus $S=\POm^-_m(q_0)$, and here the inequalities imply that $2f\geq m^{f-1}$. Hence $f=1$ (since $m\geq 8$), so $q_0^2=q$ and  $d/2$ divides $x$; in particular $x\geq d/2\geq m/2$. Note that 
\[
|S|= |\POm^-_m(q_0)|=\frac{1}{\gcd(4,q_0^{m/2+1})}q_0^{m(m-2)/4} (q_0^{m/2}+1)\prod_{i=1}^{m/2-1}(q_0^{2i}- 1).
 \]
Suppose first that $x$ is odd. Then the first factor in the product for $|S|$ that is divisible by $r$ is $q_0^{2x}-1$ with $2x\leq m-2$, (since if this first factor were $q_0^{m/2}+1$ then $x=m$ would be even). 
Since $2x\geq d\geq m$, we have a contradiction. Thus $x$ is even, and so  the first factor in the product for $|S|$ that is divisible by $r$ is either $q_0^{x}-1$ with  $x/2\leq m/2-1$, or $q_0^{m/2}+1$ with $x=m$. In particular $x\leq m$. Also $r$ divides $q_0^{x}-1=q^{x/2}-1$, so $d/2$ divides $x/2$, and hence  $m\geq x\geq d\geq m$. Thus  $d=m=x$ and the group $S=\POm^-_d(q^{1/2})$. Since by assumption $G$ is not realisable over $\F_{q_0}$, it follows from \cite[Proposition 5.4.11]{KL} that $d=m=8$ with $G$ acting on an spin module over $\F_{q_0}$. By \cite[Table 8.50]{BHRD} there is indeed a quasisimple subgroup $G=d.S < \Omega_8^+(q)$, as in line $7'$ of Table~\ref{t:c9-samechar}. This completes the analysis, and hence the entries in Table~\ref{t:c9-samechar} are correct. 

\subsection{Determining the list of quasisimple groups containing a \texorpdfstring{$(d/2)$}{}-ppd stingray element for Proposition~\ref{p:same}}\label{s:same-stingray}

We assume now that $X=\GL_d(q)$, where $d\geq 4$, $d$ is even, and $q=p^a$ for a prime $p$ and positive integer $a$,  let $Z$ be the subgroup of scalars in $X$, and $V=\F_q^d$ be the underlying natural $X$-module. Suppose also that $G<X$ such that $S\coloneq GZ/Z, d, q$ are as in one of the lines of Table~\ref{t:c9-samechar}, and that $G$ contains an element $g$ of prime order $r$, where $r$ is a primitive prime divisor of $q^{d/2}-1$, such that $g$ is a $(d/2)$-stingray element on $V$. Using the notation in  \cite{Z23}, we let $m_g(V)$ denote the maximum eigenvalue multiplicity for $g$ (over an algebraic closure of $\F_q$) and we note that $m_g(V)=d/2$, since $g$ is a $(d/2)$-stingray element.  

We consider the lines of Table~\ref{t:c9-samechar}, according to the groups $S$. First we deal with $\PSL_2(q')$. We note that groups with socle $\PSL_2(q^2)$ occur in $\PSL_4(q)$ as classical orthogonal groups of minus type, but these groups are excluded from our analysis here by our assumption in Theorem~\ref{t:stingray} that $G$ does not contain a classical group. We make some comments about the representations of $\SL_2(q)$ which will be useful in the proof.

\begin{remark}\label{r:sl2}
     Let $K$ be an algebraically closed field containing $\mathbb{F}_q$, where $q=p^a$, and suppose that $\rho:\SL_2(q)\to\GL_4(K)$ is absolutely irreducible and is a Frobenius twist $\rho=\phi\otimes \psi$ of irreducible representations $\phi,\psi:\SL_2(q)\to\GL_2(K)$.
     The theorem of Brauer and Nesbitt  (see \cite[Theorem 5.3.2]{BHRD}) describes the absolutely irreducible representations of $\SL_2(q)$ as modules $M(n)$ for some (unique) $n$ with $0\leq n\leq p^a-1$, where, if we write $n=a_0+a_1p+\dots+a_{a-1}p^{a-1}$  with $0\leq a_j\leq p-1$, then $\dim M(n)=\prod_{i=0}^{a-1}(a_i+1)$. For the precise definition of $M(n)$ see \cite[\S 5.3]{BHRD}. The module $M_\phi$ corresponding to $\phi$ has dimension $2$, and this means that $M_\phi=M(p^s)$ for some $s<a$. Note that $M(0)$ is the natural module, and $M(p^s) = M(0)^{\sigma^{s-1}}$ in the notation of \cite{BHRD}, where $\sigma$ is the automorphism $x\to x^p$ of $\F_{p^a}$.

    Thus $M_\phi=M(p^s)$ and $M_\psi=M(p^t)$ for some $s, t<a$, and the module for $\rho$ is $M_\rho =M(p^s)\otimes M(p^t)$. If $s=t$ then  $ \rho$ is reducible as $M_\rho$ is then not equal $M(i)$ for any $i\in\{0, \dots, p^a-1\}$. (See \cite[1.6]{Se} for a general result on reducibility of tensor products.) Thus $s\ne t$ and then $M_\rho= M(p^s+p^t)$, which  is one  of these absolutely irreducible modules in \cite[Theorem 5.3.2]{BHRD}, and in this case $\max\{s,t\}\geq 1$ so $a>1$.\qed
\end{remark}

\begin{lemma}\label{l:psl2}
\begin{enumerate}
    \item[(a)]      For $S=\PSL_2(q^4)$ in line~$1$ of Table~$\ref{t:c9-samechar}$, $G$ does not contain an $8$-stingray element $g$ of ppd prime order $r \mid (q^4+1)$.
    \item[(b)] For $S=\PSL_2(q)$ in line~$6$ of Table~$\ref{t:c9-samechar}$, either
    \begin{enumerate}
        \item[(i)] $G=\SL_2(q)<\Sp_4(q)$, and $G$ contains a $2$-stingray element $g$ of odd prime order $r$ if and only if $q\equiv 2\pmod{3}$ and $r=|g|=3$; or
        \item[(ii)] $G=\PSL_2(q)$, and $G$ contains a $2$-stingray element $g$ of odd prime order $r$ if and only if $q=p^{a'c}$ with $c$ odd, and $c>1$, $p^{a'}+1$  is not a power of $2$,  and $r\mid (p^{a'}+1)$.
    \end{enumerate}
\end{enumerate}
\end{lemma}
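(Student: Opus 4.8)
The unifying plan is to compute the multiset of eigenvalues of a semisimple element $g$ of order $r$ on $V=\F_q^d$ directly from the Frobenius-twisted tensor structure of the underlying $\SL_2$-module. Write $\mu,\mu^{-1}$ for the eigenvalues, of order $r$, of $g$ (or, when $G=\PSL_2$, of a lift of $g$ to $\SL_2$ of order $r$, which exists as $r$ is odd) on the natural $2$-dimensional module. Recall that $g$ is a $(d/2)$-stingray element exactly when $1$ is an eigenvalue of multiplicity $d/2$ and the remaining $d/2$ eigenvalues form a single $q$-power Frobenius orbit; and that for $d=4$, where $r\mid q+1$, this last condition holds automatically once the two non-unit eigenvalues are distinct, since $q\equiv-1\pmod r$ then swaps them and makes the corresponding quadratic over $\F_q$ irreducible (see also Lemma~\ref{l:don2} and Remark~\ref{r:e-ppd}).

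For part (a), $d=16$, $r\mid q^4+1$ and $o_r(q)=8$, so $q^4\equiv-1\pmod r$, and $V$ is the twisted tensor product of the natural $\SL_2(q^4)$-module with its twists by $x\mapsto x^{q},x\mapsto x^{q^2},x\mapsto x^{q^3}$. Hence the eigenvalues of $g$ on $V$ are $\lambda^{\epsilon_0+\epsilon_1q+\epsilon_2q^2+\epsilon_3q^3}$ with $\lambda$ of order $r$ and $(\epsilon_0,\epsilon_1,\epsilon_2,\epsilon_3)\in\{\pm1\}^4$; with $P(x)=\epsilon_0+\epsilon_1x+\epsilon_2x^2+\epsilon_3x^3$, such an eigenvalue equals $1$ iff $P(q)\equiv0\pmod r$. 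I would then use the factorisation $P(q)P(-q)=(\epsilon_0+\epsilon_2q^2)^2-q^2(\epsilon_1+\epsilon_3q^2)^2$ and reduce it modulo $r$ with $q^4\equiv-1$ to get $P(q)P(-q)\equiv2(\epsilon_0\epsilon_2q^2+\epsilon_1\epsilon_3)\pmod r$. Then $P(q)\equiv0\pmod r$ forces $q^2\equiv-\epsilon_0\epsilon_1\epsilon_2\epsilon_3\equiv\pm1\pmod r$, hence $q^4\equiv1\pmod r$, contradicting $o_r(q)=8$. Thus $1$ is never an eigenvalue of $g$ on $V$, so $g$ has no nonzero fixed subspace and cannot be an $8$-stingray element; this proves (a).

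For part (b), $d=4$ and $r$ is an odd prime with $o_r(q)=2$, i.e.\ $r\mid q+1$, $r\nmid q-1$. In case (i), $G=\SL_2(q)$ acts via the symmetric cube $S^3$ of its natural module (irreducible as $p\ge5$), so $g$ has eigenvalues $\mu^{\pm3},\mu^{\pm1}$; the eigenvalue $1$ occurs iff $\mu^3=1$, i.e.\ iff $r=3$, and then the multiset is $\{1,1,\mu,\mu^{-1}\}$ with $\mu$ a primitive cube root of unity, giving a $2$-stingray element precisely because $3\nmid q-1$ makes $\mu\notin\F_q$. Since $3$ is a ppd of $q^2-1$ iff $q\equiv2\pmod3$, and $\SL_2(q)$ then has an element of order $3$, this gives (i). In case (ii), $G=\PSL_2(q)$, so by Remark~\ref{r:sl2} the module is a twisted tensor product of two twists of the natural module; replacing $\mu$ by a suitable power of itself (still a primitive $r$-th root of unity), the eigenvalues of $g$ on $V$ become $\mu^{\pm(p^j+1)},\mu^{\pm(p^j-1)}$ for some $j$ with $1\le j<a$. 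The eigenvalue $1$ occurs iff $r\mid p^j-1$ or $r\mid p^j+1$, i.e.\ iff $o_r(p)\mid2j$; when it does, exactly two of the four eigenvalues equal $1$ while the other two are a pair $\nu,\nu^{-1}$ of distinct primitive $r$-th roots of unity, so $g$ is a $2$-stingray element. Hence $g$ is a $2$-stingray element iff $o_r(p)\mid2j$. Now $r\mid q+1$ and $r\nmid q-1$ force $m\coloneq o_r(p)$ to be even, say $m=2a'$ with $a'\mid a$, $c\coloneq a/a'$ odd, and $r\mid p^{a'}+1$; and $m\mid2j$ reads $a'\mid j$, so (as $1\le j<a$) $a'$ is a proper divisor of $a$, whence $c>1$, $q=p^{a'c}$, and $p^{a'}+1$ has the odd prime divisor $r$ and so is not a power of $2$. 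Conversely, given $q=p^{a'c}$ with $c$ odd $>1$, $p^{a'}+1$ not a power of $2$, and an odd prime $r\mid p^{a'}+1$: as $c$ is odd, $r\mid p^{a'}+1\mid p^{a'c}+1=q+1$, so $r$ is a ppd of $q^2-1$; the twisted tensor product of the natural module with its twist by $x\mapsto x^{p^{a'}}$ is written over no proper subfield of $\F_q$ (since $c$, being odd and $>1$, does not divide $2$), giving a $\cC_9$-subgroup $G\cong\PSL_2(q)$ of $\GL_4(q)$ as in line~$6$; and an element $g$ of order $r$ of $G$ has $o_r(p)\mid2a'$ (as $r\mid p^{a'}+1$), hence is a $2$-stingray element by the above. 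This proves (ii).

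I expect the main difficulty to be the arithmetic bookkeeping in (b)(ii): distilling the parametrisation ``$q=p^{a'c}$, $c$ odd $>1$, $r\mid p^{a'}+1$'' out of the conditions $r\mid q+1$ and $o_r(p)\mid2j$, and verifying in the converse that the chosen twist is written over no proper subfield of $\F_q$, so that it actually appears in Table~\ref{t:c9-samechar}. By contrast part (a) is short once the factorisation of $P(q)P(-q)$ is noticed, and part (b)(i) is routine.
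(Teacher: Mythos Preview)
Your proposal is correct and takes a genuinely more elementary route than the paper's proof, which leans heavily on results from \cite{Z23}. In part~(a) the paper quotes \cite[Corollary~1.3]{Z23} to force $d=4$ from $m_g(V)=d/2$, whereas your factorisation $P(q)P(-q)\equiv 2(\epsilon_0\epsilon_2 q^2+\epsilon_1\epsilon_3)\pmod r$ gives a short self-contained contradiction to $o_r(q)=8$. In part~(b) the paper invokes \cite[Lemma~5.7]{Z23} to produce the dichotomy (highest weight $3\omega_1$ versus tensor-decomposable) and then \cite[Lemma~5.1]{Z23} for the eigenvalue analysis; you obtain the same dichotomy directly from Remark~\ref{r:sl2} and compute eigenvalues by hand. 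One structural difference worth noting: in (b)(ii) the paper's parameter $a'$ is read off the representation (the Frobenius twist $\gamma$ between the two tensor factors, so $a'=j$ in your notation), while your $a'$ is intrinsic to the prime $r$, namely $a'=o_r(p)/2$. Both choices yield valid witnesses for the existential statement, and your choice makes the forward direction slightly cleaner since $a'$ does not depend on which $\cC_9$-copy of $\PSL_2(q)$ one started from.

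Two places deserve a sentence more of justification when you write this up. First, the parenthetical ``since $c$, being odd and $>1$, does not divide $2$'' for the subfield condition is correct but terse: the point is that the $p$-adic digit pattern of $1+p^{a'}$ has trivial cyclic-shift stabiliser in $\mathbb Z/a\mathbb Z$ precisely when $a\nmid 2a'$, i.e.\ $c\nmid 2$. Second, in the converse of (b)(ii) you should check explicitly that $r\nmid q-1$ (so that $r$ really is a $2$-ppd prime): this follows because $o_r(p)\mid 2a'$ but $o_r(p)\nmid a'$ forces the $2$-adic valuation of $o_r(p)$ to exceed that of $a'$, hence also that of $a=a'c$ with $c$ odd, so $o_r(p)\nmid a$.
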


\begin{proof}
    Let $K$ be an algebraically closed field containing $\mathbb{F}_q$, and let $H=\SL_2(q')$, where $q'=q^4$ for line 1 and $q'=q$ for line 6 of Table~\ref{t:c9-samechar}. Let $\rho:H\to\GL_d(q)$ denote the representation with $\rho(H)=G$ and let $h\in H$ such that $\rho(h)=g$. Note that the prime $r$ divides $q^4+1$ or $q+1$ in lines 1, 6, respectively, and in both cases $h$ is a regular semisimple element of $H$.
    
By the general representation theory of algebraic groups, every absolutely  irreducible representation of $H$  extends to an  irreducible representation of a simple algebraic group $\mathbf{Y}$, see \cite[Theorem 43]{St}. Since $H= \SL_2(q')$ we have $\mathbf{Y}\cong \SL_2(K )$, and it follows from \cite[Corollary 1.3]{Z23}, and $m_g(V)=d/2$, that $d=4$. Hence no $8$-stingray element exists in line 1, proving part (a), and we are therefore in line 6 with $e=d/2=2$ and $q'=q$.  Thus we can use \cite[Lemma 5.7]{Z23} with $\dim V=4$, and
hence case (2) or (3) of \cite[Lemma 5.7]{Z23} holds. 

Consider first case (2)  of  \cite[Lemma 5.7]{Z23}. We see in the proof of \cite[Lemma 5.7]{Z23} that   case (2) arises from an  irreducible representation of $\mathbf{Y}$ with highest weight $3\om_1$. This representation can be realized in the space of homogeneous polynomials of degree~$3$ in two variables, see for instance   \cite[Theorem 10.1.8, p.\;114]{Bn}. This representation 
is not tensor-decomposable for $\mathbf{Y}$ \cite[(1.6)]{Se}, and hence also not for $H$. (This is obvious for $q$ odd: as $\Z(H)$ is non-trivial in this  representation, while $\Z(H)$ is trivial in every tensor-decomposable  representation of degree $4$.)  In fact, $G=\rho(H)\subset \Sp_4(q)$, see \cite[Proposition 5.3.6(i)]{BHRD} or \cite[Lemma 79]{St}. We see as follows that the elements obtained in  \cite[Lemma 5.7(2)]{Z23} really do lie in a conjugate of $G$. We must have, $h^3\in \Z(H)$ by \cite[Lemma 5.7]{Z23}, whence $|\rho(h)|=|g|=3$. The Sylow $3$-subgroups of $H$ are cyclic; and this implies that elements of order $3$ in $H$ are conjugate to those in $\SL_2(p)$. Therefore, $h$ can be taken to lie in $H $. In addition, $3\mid(q+1)$ as otherwise $\rho(h)=g$ is diagonalisable, hence not a $2$-stingray element in $\GL_4(q)$. 

Now consider case (3) of  \cite[Lemma 5.7]{Z23}. As in the proof of \cite[Lemma 5.7]{Z23}, the representation over $K$ is a tensor product $\rho=\rho_1\otimes \rho_2$ (see Remark~\ref{r:sl2}), where $\rho_1,\rho_2$ are inequivalent irreducible representations of $H$ of degree $2$,  $a>1$, and $\rho(H)=\PSL_2(q)$. Further, over $K$, semisimple elements of $\rho(H)$ are of the form $\diag(t,t^{-1})$.  
We can write $\rho_1(h)=\diag(t_1,t_1^{-1})$ and $\rho_2(h)=\diag(t_2,t_2^{-1})$ for $t_1,t_2\in K$. Hence $\rho(h)$ has eigenvalue $1$ if and only if $t_2\in\{t_1,t_1^{-1}\}$, and then   $\rho(h)=\diag(1,1,t_1^2,t_1^{-2})$. In addition, the $2$-stingray element $g=\rho(h)$ is required to be  non-diagonalizable  in $\GL_4(q)$, so $t_1^2\notin \F_q$. As $r=|g|$ is odd, this means that $t_1\notin \F_q$. 

Now we also have $\rho_2=\rho_1^\gamma$ for some field automorphism $\gamma$, where $\gamma\ne 1$ since $\rho_1,\rho_2$ are inequivalent (see Remark~\ref{r:sl2}). Moreover we must have $\gamma^2\neq 1$, as otherwise $q$ is a square and $\rho_1\otimes \rho_2$ can be realized over  $\mathbb{F}_{\sqrt{q}}$, and we are assuming that this is not the case. Let $\F_{q_0}$ be the fixed field of $\gamma$. Then $q_0=p^{a'}$ and $q=p^a=p^{a'c}$, where $c=|\gamma|>2$.

The hypotheses of \cite[Lemma 5.1]{Z23} hold for the  $2$-stingray element $g=\rho(h)$ and the `highest weight $(1+p^{a'})\omega_1$', and this result implies that $h$ is conjugate in $\SL_2(K)$ to an element of $\SL_2(q_0)$. 

We now wish to apply Lemma 5.1 of \cite{Z23}, and in order to do so we need to interpret the notation of that lemma in terms of our notation. For this 
let $f:K\rightarrow K$ be the mapping defined by $f(x)=x^p$ for $x\in K$. Then $f$ stabilises every finite subfield $\mathbb{F}_{p^e}$ of $K$ and induces on it a Galois automorphism of $\mathbb{F}_{p^e}/\mathbb{F}_p$
of order $e$. An advantage of using $f$ rather than Galois automorphisms is that every automorphism of $\mathbb{F}_q$ automatically extends to $K$ and 
to every finite subfield $F$ with $\mathbb{F}_q\subset F\subset K$. Note that $f$ induces an automorphism of $\GL_2(K)$
and is called the standard Frobenius morphism $\GL_2(K)\rightarrow \GL_2(K)$. Moreover,
if $y\in \GL_2(K)$ is diagonalizable, say  $MyM^{-1}=\diag(x_1,x_2)$ with $x_1,x_2\in K$ for some $M\in \GL_2(K)$, then $f(y)=\diag(f(x_1),f(x_2))$. 

Next we mimic the proof of \cite[Lemma 5.1]{Z23} for the reader's convenience. The automorphism $\gamma$ is equal to $f^{a'}$, and we recall that the subfield of $\F_q$ fixed by $\gamma$ is $\F_{p^{a'}}=\F_{q_0}$. 
The element $h\in H$ such that $g=\rho(h)$ is conjugate to $\diag(t,t^{-1})$ in $\SL_2(K)$, for some $t\in K$. We may assume that $\rho_1$
is the natural representation, so that $\rho_1(h)=h$, and then $\rho_2(h)=\rho_1(\gamma(h))$ is conjugate to $\diag(t^{p^{a'}}, t^{-p^{a'}})$. Therefore $\rho(h)=\diag(t^{p^{a'}+1},t^{p^{a'}-1},t^{-p^{a'}+1},t^{-p^{a'}-1})$. As $\rho(h)$ is a $2$-stingray element, two of these entries equal $1$, so either   $t^{p^{a'}+1}=1=t^{-p^{a'}-1}$ or $t^{p^{a'}-1}=1=t^{-p^{a'}+1}$.  
In the latter case $t\in   \mathbb{F}_{q_0}$, and hence $h$ is  conjugate to an element of $\SL_2(q_0)$. 
However, this implies that $\rho(h)$ is diagonalisable in $\GL_4(q)$, so $\rho(h)$ is not a stingray element. Hence the former case holds and here $|t|$ divides $q_0+1$, and $\rho(h)=\diag(1,t^{-2},t^{2},1)$. As $\SL_2(q_0)$ contains an element of order
$q_0+1$, it easily follows that $h$ is  conjugate to an element of $\SL_2(q_0)$.
We note the following for $q=q_0^c$: if $c$ is odd then $(q_0+1,q-1)=(q_0,2)$ which is not divisible by $|t|$ and hence $\rho(h)$ is   not diagonalizable. On the other hand, if $c$ is even then $|t|$ divides $q_0+1$, which divides $q_0^2-1$ and hence divides $q-1$, so   $\rho(h)$ is   diagonalisable in $\GL_4(q)$. In summary, $\rho(h)$ is   diagonalisable in $\GL_4(q)$ if and only if $\mathbb{F}_{q_0^2}\subseteq \mathbb{F} _q$.

Thus $\rho(H)$ contains $2$-stingray elements if and only if $a=a'c$ with $c$ odd and as mentioned above, $c>1$ (since $\rho_1, \rho_2$ are inequivalent); and in this case $\rho(H)$ does contain $2$-stingray elements of any order $r$ dividing $q_0+1=p^{a'}+1$ such that $r>2$. In particular in this case $\rho(H)$ contains a $2$-stingray element of order $r$ for each odd prime dividing $p^{a'}+1$ (if such exists), and each prime $r$ is a primitive prime divisor of $q^2-1$ since $(q_0+1,q-1)\leq 2$.   
Thus part (b) is proved.
\end{proof}

Now we consider the groups of Lie type in Table~\ref{t:c9-samechar} other than  $\PSL_2(q')$ for $p$-powers $q'$. 
Let $H$ be a  quasi-simple group of Lie type in defining characteristic $p$ such that $\rho:H\to\GL_d(q)$ has image $\rho(H)=G$. We may assume that $H$ is universal subject to the condition  $(|\Z(H)|,p)=1$.  
Let $h\in H$ be such that $\rho(h)$ is the $(d/2)$-stingray element $g$ of order $r$. Recall that the absolutely irreducible representation $\rho$ of $H$ extends to a  representation of a simple universal (or simply connected)  algebraic group $\mathbf{H}$.

Now a $(d/2)$-stingray element $g=\rho(h)\in\GL(V)$ is almost cyclic (see Section~\ref{s:strategy} and Remark~\ref{r:e-ppd}), and also $g$ is semisimple with  $m_g(V)=d/2$ (see the beginning of Subsection~\ref{s:same-stingray}), and so we can use the analysis in \cite{Z23}. This distinguishes between the cases with $h$ regular and non-regular (as an element of $H$). Note that if $h$ is regular semisimple then $(p,|C_G(g)|)=1$. 

If $h$ is regular, and hence regular semisimple, then by \cite[Theorem 1.4]{Z23}, the condition that $g$ is a $(d/2)$-stingray element with $m_g(V)=d/2$  implies that $H=\SL_2(q')$ for some $p$-power $q'$, which is not the case. Hence $h$ is not regular. Then, as $h$ is almost cyclic, it follows from \cite[Theorem 5.11]{Z23} that $H$ must lie in the set 

\begin{align}
    \mathcal{S}\coloneq \{\SL_{n+1}(q'), \SU_{n+1}(q'),\;&\Sp_{2n}(q'),  n>1\}\;\cup\notag \\ &\{\Spin_{2n+1}(q'), q'{\rm\ odd\ }, \Spin_{2n}^\pm(q'), n\geq 3\},\label{e:S}
\end{align}
for some $p$-power $q'$, and that the degree $d$ is equal to $n+1,n+1,2n$, $2n+1$, $2n$, respectively. Observe that
$\SL_4(q')\cong \Spin^+_6(q')$ and $\SU_4(q')\cong \Spin_6^-(q')$. We compare this information with the list from Table~\ref{t:c9-samechar}.

\begin{lemma}\label{l:same-notsl}
If $S\not\cong\PSL_2(q')$ for any $p$-power $q'$, and $S$ occurs in a line of Table~$\ref{t:c9-samechar}$, then the group $G$ in that line  does not contain a $(d/2)$-ppd stingray element.
\end{lemma}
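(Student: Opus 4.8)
The plan is to argue by contradiction, combining the structural dichotomy for almost cyclic semisimple elements (set up in the paragraphs immediately preceding the lemma) with a line-by-line scan of Table~\ref{t:c9-samechar}. So I would suppose that $G$ contains a $(d/2)$-ppd stingray element $g$ of prime order $r$, write $g=\rho(h)$ with $h\in H$, and replace $h$ by its $r$-part; since $r$ is coprime to $|\Z(H)|$ this replaces $g$ only by a power of itself, still a $(d/2)$-stingray element, and $h$ is now a semisimple element of $H$ of order $r$. By the discussion above, $h$ cannot be regular --- otherwise \cite[Theorem~1.4]{Z23} forces $H\cong\SL_2(q')$, contrary to the hypothesis $S\not\cong\PSL_2(q')$ --- so \cite[Theorem~5.11]{Z23} applies: $H$ lies in the set $\mathcal{S}$ of~\eqref{e:S}, the $H$-module $V$ is the natural module of $H$, and $d=\dim V$ equals $n+1$ for $H\cong\SL_{n+1}(q')$ or $\SU_{n+1}(q')$, $2n$ for $H\cong\Sp_{2n}(q')$ or $\Spin^{\pm}_{2n}(q')$, and $2n+1$ for $H\cong\Spin_{2n+1}(q')$; the last possibility is excluded at once because $d$ is even. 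The consequence I would extract is that $d$ equals $R_p(S)$, the dimension of the natural module of the classical simple group $S=H/\Z(H)$ (compare Table~\ref{t:c9-samechar2}).

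Next I would walk through the lines of Table~\ref{t:c9-samechar} having $S\not\cong\PSL_2(q')$. Lines~12 and~13, where $S=G_2(q)$ or ${}^3\kern-1pt D_4(q^{1/3})$, are disposed of at once: these simple groups are of exceptional type, hence are not the central quotient of any member of $\mathcal{S}$ (equivalently, as recorded in the proof of Proposition~\ref{p:same}, $G<\Sp_6(q)$ respectively $G<\Omega_8^+(q)$ with every element of order $r$ acting fixed-point-freely on $V$). For each remaining line $S$ is classical and the module $V$ named there is a proper symmetric square $S^2V_0$, a proper exterior power $\Lambda^2V_0$ or $\Lambda^3V_0$, a spin module of an odd-dimensional orthogonal group, a spin module of $\POm_{10}^+(q)$, or a (twisted) tensor product of natural modules; in every such line one reads off directly from Table~\ref{t:c9-samechar} that $d$ strictly exceeds the dimension of the natural module of $S$ --- for instance $(m,d)=(6,20),(7,28),(8,28),(9,36)$ in line~3, $(m,d)=(3,6),(5,10)$ in line~7, $(m,d)=(7,8),(9,16)$ in line~8, $(m,d)=(6,8),(8,16)$ in line~10, and $d=16$ exceeds the natural dimension in lines~2, 5 and~11 --- contradicting $d=R_p(S)$.

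The single line that resists this crude dimension count is line~9, where $S=\POm^-_8(q^{1/2})$ has natural module of dimension exactly $8=d$, so the naive argument yields equality rather than a contradiction; I expect this to be the one delicate point. The way around it is that the module $V$ of line~9 is a half-spin module of $\POm^-_8(q^{1/2})$, which by Remark~\ref{r:O8spin} (applied with $q^{1/2}$ in place of $q$) is realisable over $\F_q$ but not over the proper subfield $\F_{q^{1/2}}$, whereas the natural module of $\Spin^-_8(q^{1/2})$ is defined over $\F_{q^{1/2}}$. Hence $V|_H$ is not the natural module of $H=\Spin^-_8(q^{1/2})$, which contradicts the first paragraph and finishes the proof. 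In short, the entire argument is bookkeeping against dimensions already tabulated, except for separating the half-spin representation of the twisted form ${}^2\kern-2pt D_4$ from its natural representation in line~9, where the field of definition does the work.
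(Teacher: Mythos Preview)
Your overall strategy matches the paper's: rule out lines 12--13 because the groups are not in $\mathcal{S}$, and for the remaining classical lines compare $d$ with the degree forced by \cite[Theorem~5.11]{Z23}. This disposes of lines 2--5, 7, 8, 10, 11 exactly as the paper does (the paper also records the exceptional isomorphism $\SU_4(q')\cong\Spin_6^-(q')$, which allows $(d,m)=(6,4)$ in principle, but this pair does not occur in Table~\ref{t:c9-samechar} so the point is moot).

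For line~9 your argument diverges from the paper and, as you anticipated, this is where a gap appears. You infer from \cite[Theorem~5.11]{Z23} not only that $d=2n$ but that $V$ is the \emph{natural} module of $H=\Spin_8^-(q_0)$, and then separate natural from half-spin by field of definition. The paper's paraphrase of \cite[Theorem~5.11]{Z23}, however, extracts only the degree constraint $d\in\{n+1,2n,2n+1\}$, and for $\Spin_8^-(q_0)$ both the natural module and the half-spin module have dimension $8$; so the dimension count alone does not yield a contradiction, and you would need to check that the cited theorem really pins down the highest weight rather than merely $\dim V$. The paper avoids this by a different and self-contained route: since $r$ is a primitive prime divisor of $q^4-1=q_0^8-1$ (using $\gcd(q_0^4+1,q_0^i-1)\le 2$ for $i<8$), the image of $h$ in the natural $8$-dimensional representation of $H$ over $\F_{q_0}$ is irreducible, hence $h$ is regular semisimple in $H$; but then \cite[Theorem~1.4]{Z23} forces $H\cong\SL_2(q')$, contradicting $S\not\cong\PSL_2(q')$. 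In short, rather than arguing that $V$ is the wrong module, the paper shows that $h$ was regular all along.
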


\begin{proof}
    The groups  $S=G_2(q)$ and ${}^3\kern -2pt D_4(q^{1/3})$ do not arise in the set $\mathcal{S}$ in~\eqref{e:S}, and hence lines 12 and 13 of Table~\ref{t:c9-samechar} are ruled out.
    The groups $S=\PSL_m(q')$ or $\PSU_m(q')$ for some $m\geq 3$,  arise in lines 2, 3, 4 and 7 of Table~\ref{t:c9-samechar} and, by our comments above, for these groups either the degree $d$ should be $m$, or $(d,m)=(6,4)$. As this is not the case, these lines are ruled out. 
    A similar check rules out lines 5 and 10 for $S=\PSp_m(q')$, line 8 for $S=\POm_m^\circ(q)$, and line 11 for $S=\POm_{10}^+(q)$. 
    
    The only case remaining is line 9 with $S=\POm_8^-(q^{1/2})$. Let $q_0=q^{1/2}$.  Comparing with the set $\mathcal{S}$ we see that in this case there is a faithful irreducible representation $\rho:H\to\GL_8(q)$ with $G=\rho(H)\cong H=\Spin_8^-(q_0)$ acting on $V=\F_q^8$, and as noted in Table~\ref{t:c9-samechar}, $G\leq \Omega_8^+(q)<\GL_8(q)$. We emphasise that the representation $\rho$ is not realisable over a proper subfield, as is noted in the proof of \cite[Proposition 5.4.9(iii)]{KL} which establishes the embedding of $\rho(H)$ in $\Omega_8^+(q)$, and is confirmed by its inclusion in \cite[Table 8.50, on p. 403]{BHRD} (as  $H\in \mathcal{S}$). The  element $g=\rho(h)$ has order $r$, a primitive prime divisor of $q^4-1$, so $r$ divides $q^2+1 = q_0^4+1$ and $r\geq d/2+1=5$. It follows that, in the natural representation $\phi:H\to \GL_8(q_0)$ on  $V_0=\F_{q_0}^8$, the element $\phi(h)$ is irreducible (noting that $r$ does not divide $q_0^i-1$ for any $i<8$ since $\gcd(q_0^4+1, q_0^i-1)=\gcd(q_0,2)$).    Since $\phi(h)$ is irreducible on $V_0$ it follows that $h$ is a regular semisimple element of $H$ (as otherwise $C_H(h)$ would contain a unipotent element). Therefore, if $g=\rho(h)$ is a $4$-stringray element then we have a contradiction to  \cite[Theorem 1.4]{Z23}. Thus $G$ contains no $4$-ppd stingray elements. This completes the proof of the lemma.
    \end{proof}

\medskip\noindent
\emph{Proof of Proposition~\ref{p:same}:}\quad The first assertion of Proposition~\ref{p:same}, justifying the entries in the first three columns of Table~\ref{t:c9-samechar}, is proved in Subsection~\ref{s:same-ppd}. The final assertions of Proposition~\ref{p:same} concerning $(d/2)$-ppd stingray elements of prime order follow from Lemmas~\ref{l:psl2} and~\ref{l:same-notsl}.

\section{Acknowledgements}

We remember  our colleague Richard  Parker, to whom this paper is dedicated, for  his infectious
enthusiasm  for  finite  simple  groups. We are grateful to him for  providing  essential
computational tools  to work with  them. As  a key contributor  to the
Atlas project,  Richard   played a significant  role in  shaping it
into the invaluable resource for group theory it is today.  This paper
 greatly  benefited from  the information  on finite  simple groups
available  in  the Atlas  and  Modular  Atlas, accessible  through  the
AtlasRep package in GAP, which Richard co-authored.

We thank Thomas Breuer, a  co-author of the AtlasRep package, for
his valuable contributions  to its development. His advice
has been instrumental  in ensuring the accuracy of the  tables in this
paper.   We  are grateful  to  Gerhard  Hi{\ss}  and Gunter  Malle  for
providing current versions of their  tables of  absolutely irreducible  representations of
Lie-type groups in non-defining characteristic, which formed the basis
of our computational tests.

We   thank Tim Burness  and Frank L{\"u}beck for  their insightful
discussions on some  of the groups we encountered,  and Martin Liebeck
for his helpful  advice, especially for directing us  to the work
of M.\ Schaffer~\cite{S99}.   We thank Colva Roney-Dougal for her advice  and
confirming that certain small
degree quasisimple $\cC_9$-subgroups are not maximal.

The second author acknowledges support by the German Research Foundation (DFG) -- Project-ID 286237555 -- within the SFB-TRR 195 ``Symbolic Tools in Mathematics
and their Applications''.
The third and fourth authors would like to thank the Isaac Newton Institute for Mathematical Sciences, Cambridge, for support and hospitality during the programme Groups, representations and applications: new perspectives, where work on this paper was undertaken. This work was supported by EPSRC grant EP/R014604/1. The work forms part of the Australian Research Council Discovery Project DP190100450 of the first three authors.

\end{document}